\numberwithin{equation}{section}
\newtheorem{theorem}{Theorem}[section]
\newtheorem{definition}{Definition}[]
\newtheorem{corollary}[theorem]{Corollary}
\newtheorem{lemma}[theorem]{Lemma}
\newtheorem{proposition}[theorem]{Proposition}
\newtheorem{remark}{Remark}[section]
\newenvironment{proof-th1}[1][Proof of Theorem \ref{thm:main}:\ ]{\medskip\noindent\textbf{\textit{#1}}}{\hfill$\Box$\medskip}
\newenvironment{proof-th2}[1][Proof of Theorem \ref{thm:pm}:\ ]{\medskip\noindent\textbf{\textit{#1}}}{\hfill$\Box$\medskip}
\newenvironment{proof-th}[1][Proof]{\medskip\noindent\textbf{\textit{#1}}}{\hfill$\Box$\medskip}
\newcommand{\R}{\mathbb{R}}
\title{Inhomogeneous nonlinear Schr\"odinger equations with competing singular nonlinearities}
\author{\textbf{Elisandra Gloss}\\
\small Departamento de Matem\'atica\\
\small Universidade Federal da Para\'iba\\
\small Cidade Universit\'aria, 58051-900, João Pessoa, Brazil\\
\small \textit{elisandra.gloss@academico.ufpb.br}\medskip\\\textbf{Kanishka Perera}\\
\small Department of Mathematics\\\small Florida Institute of Technology\\\small
150 W University Blvd, Melbourne, FL 32901-6975, USA\\\small
\textit{kperera@fit.edu}\medskip\\\textbf{Bruno Ribeiro}\\
\small Departamento de Matem\'atica\\
\small Universidade Federal da Para\'iba\\
\small Cidade Universit\'aria, 58051-900, João Pessoa, Brazil\\
\small \textit{bhcr@academico.ufpb.br}
}
\begin{document}

\maketitle
\date

\begin{abstract}

We study nonlinear elliptic equations that arise as stationary states of inhomogeneous nonlinear Schrödinger equations with competing singular nonlinearities. The model involves the Laplacian combined with weighted power-type terms and naturally leads to a variational formulation in a weighted Sobolev space obtained from the intersection of the homogeneous Sobolev space with a weighted Lebesgue space. Using sharp weighted Sobolev and Caffarelli--Kohn--Nirenberg type inequalities, we establish continuous and compact embeddings of this space into suitable weighted Lebesgue spaces. These embedding results, together with a natural scaling structure of the model, allow us to apply the abstract critical point framework of Mercuri and Perera (2026), yielding a sequence of nonlinear eigenvalues for the associated problem via a min--max scheme based on the Fadell--Rabinowitz cohomological index. Within this framework we obtain a broad collection of existence and multiplicity results for equations driven by sums of weighted power nonlinearities, covering  interactions in both subcritical and critical cases. We also establish a nonexistence result derived from a Pohozaev-type identity. Finally, we analyze the radial setting, where improved radial Caffarelli--Kohn--Nirenberg inequalities allow us to enlarge some of the admissible embedding ranges. This leads to strengthened radial versions of our main results.

\bigskip

\noindent\textbf{Keywords:} inhomogeneous nonlinear Schrödinger equations; singular weighted elliptic problems; nonlinear eigenvalues; variational methods.

\medskip

\noindent\textbf{MSC 2020:} \textit{Primary:} 35J60, 35J20; \textit{Secondary:} 35Q55, 35B33, 35P30, 58E05.

\end{abstract}

\tableofcontents

\newpage

\section{Introduction}

Elliptic equations with singular or varying coefficients appear naturally in the study of several physical models such as plasma physics, nonlinear optics, and Bose–Einstein condensation.  In these contexts, the local strength of the nonlinearity depends on the spatial position, leading to so-called \emph{inhomogeneous nonlinear Schr\"odinger equations} (INLS).  The time-dependent INLS
\[
i\partial_t u + \Delta u = \lambda |x|^{-a}|u|^{p-2}u + \mu |x|^{-b}|u|^{q-2}u,
\qquad (t,x)\in\mathbb{R}\times\mathbb{R}^N,
\]
exhibits rich dynamics involving the competition between two nonlinearities with distinct homogeneities.  Depending on the signs of the parameters, one nonlinearity may be focusing and the other defocusing, leading to the coexistence of scattering and blow-up phenomena, as shown for instance in \cite{Bellazinietall, Campos1, Farah-Guzman JDE, Farah-Guzman BBMS, GoMaSa2023, GoMaSa2025, HaLuWa2024, Tao-Visan-Zhang-2007}.  The search for {stationary wave} solutions of such evolution problems gives rise to a large family of nonlinear elliptic equations of Schr\"odinger type with singular or decaying weights.

The study of weighted nonlinear elliptic equations has a long tradition, deeply connected with the development of weighted Sobolev embeddings and Caffarelli–Kohn–Nirenberg (CKN) inequalities \cite{CKN-1984, Mallick-Nguyen-2023}.  The presence of weights such as $|x|^{-a}$ or $|x|^{-b}$ typically breaks translation and scaling invariance, which requires the analysis to be carried out in suitable weighted Sobolev spaces.  Early works addressing these difficulties include Wang and Willem \cite{Wang-Willem-2000JDE}, who considered singular minimization problems for the Laplacian with critical weighted nonlinearities that restore scaling invariance and proved existence of solutions via minimization of the best embedding constant.  Ambrosetti, Felli and Malchiodi \cite{Amb-Fel-Mal-2005} studied Schr\"odinger equations with potentials vanishing at infinity and established embeddings in weighted Sobolev spaces, obtaining ground state solutions for nonlinearities in the range $\sigma<p<2^*$, where $\sigma$ depends on the decay of the potential.  Related approaches closer to the problems treated in the present paper were developed by Sintzoff \cite{Sintzoff-Thesis, Sintzoff} and by Su, Wang and Willem \cite{Su-Wang-Willem-2007CCM}, who considered unbounded and decaying radial potentials, establishing compact weighted embeddings and the existence of solutions.  These results were later partially extended to the \(p\)-Laplacian setting by  Su and Tian~\cite{Su-Tian} and by Assunção, Carrião and Miyagaki \cite{Assu-Carr-Miyag-2006AML}, who studied \(p\)-Laplacian equations with singular weights, employing Caffarelli–Kohn–Nirenberg inequalities and a symmetric Mountain Pass framework to obtain infinitely many solutions.
Further developments on eigenvalue-type problems with singular potentials were obtained by Chabrowski and Costa \cite{Chab-Cos-2010}, who studied multiplicity and bifurcation near the first eigenvalue. Other contributions along these lines, considering bounded and unbounded domains,  include \cite{Baraket-Ghorbal-Fig, Bas-Miy-Vie-2014, Chen-Chen-Xiu-2013, Fig-Kia, Filippuccietal, Liang-Zhang, Liu-Zhao, Miy-San-Vie-2019, Terracini}.

In parallel, considerable progress has been made in the dispersive framework associated with the time-dependent INLS equation.  For the three-dimensional cubic focusing case, Farah and Guzmán \cite{Farah-Guzman JDE} obtained scattering of radial solutions below the ground-state threshold, later extending their results to higher dimensions in \cite{Farah-Guzman BBMS}.  Campos \cite{Campos1} and Hajaiej, Luo and Wang \cite{HaLuWa2024} refined these techniques for more general exponents and in the presence of inverse-square potentials, while Bellazzini, Dinh and Forcella \cite{Bellazinietall} employed the interaction Morawetz method to treat the nonradial problem with combined nonlinearities.  The recent papers by Gou, Majdoub and Saanouni \cite{GoMaSa2023, GoMaSa2025} have also studied INLS equations with competing inhomogeneous terms, both in the radial and nonradial settings.  These works highlight the role of the associated elliptic stationary equations in determining the threshold dynamics of the corresponding evolution problem.

Motivated by these results, we turn our attention to the elliptic stationary problem,
\begin{equation}\label{NE problem}
    -\Delta u + \frac{|u|^{q-2}u}{|x|^b} = \lambda\,\frac{|u|^{p-2}u}{|x|^a}
    \qquad\text{in }\mathbb{R}^N,
\end{equation}
where $0\le b<a<2$, $2<p<q$, and $N\ge2$.  This problem can be viewed as the search for standing wave profiles of the time-dependent INLS with two competing nonlinearities.

Throughout this work, we define, for any $0\le\eta<N$,
\[
2^*_\eta=\left\{\begin{aligned}&+\infty&\text{ if }&\quad N=2\\&\frac{2(N-\eta)}{N-2}&\text{ if }&\quad N\geq 3,
\end{aligned}
\right.
\]
and we assume $2<q<2^*_b$. This is equivalent to requiring that $N<2(q-b)/(q-2)$.
Let
$$
\delta=\frac{2-b}{q-2}.
$$
The associated scaling given by 
\begin{equation}
\begin{aligned}\label{scalingdef}
    u_t(x) = t^{\delta} u(tx),\,\,t\in[0,\infty),\,\,x\in\R^N
\end{aligned}\end{equation}
leaves the equation invariant when the parameters satisfy \begin{equation}\label{a def}
a=a(b,q,p):=2-\delta(p-2) = b+\delta(q-p)\in(b,2).
\end{equation} 

If \( u \) is a solution of equation \eqref{NE problem}, then the entire 1-parameter family of functions \( \{u_t : t \geq 0\} \) is a solution. Therefore, \eqref{NE problem} can be regarded as a nonlinear eigenvalue problem. In this work we exploit the theory developed in \cite{MePe2025} to study \eqref{NE problem} and related problems of the form
\begin{equation}\label{general problem}
    -\Delta u + \frac{|u|^{q-2}u}{|x|^b} = f(x,u) \quad \text{in } \mathbb{R}^N.
\end{equation}
To emphasize the interaction between $f$ and the nonlinear eigenvalues for \eqref{NE problem}, we usually set 
\begin{equation*}
    f(x,u)=\lambda \frac{|u|^{p-2}u}{|x|^a} + g(x, u)
\end{equation*}
where \( g \) is a Carathéodory function on \( \mathbb R^N \times \mathbb{R} \) satisfying a suitable growth condition. Throughout most of this paper, we work under the following basic assumptions on the parameters:
\begin{equation}\tag{$\mathcal H$}\label{basicparametershypothesis}
N \ge 2, 
\qquad 
0 \le b < 2 < p < q < 2_b^*, 
\qquad 
\text{and } a \text{ is given by } \eqref{a def}.
\end{equation}

By using sharp weighted embeddings of Caffarelli–Kohn–Nirenberg type and their 
radial refinements \cite{CKN-1984, Mallick-Nguyen-2023}, we obtain continuous and 
compact embeddings of the relevant weighted Sobolev spaces into suitable weighted 
Lebesgue spaces.  
These embeddings allow for a variational construction of nonlinear eigenpairs 
\((\lambda_k,u_k)\) for problem \eqref{NE problem} via a min–max scheme based on the 
cohomological index of Fadell and Rabinowitz \cite{fadel-rabi}.  
Our approach treats one of the nonlinear terms as part of the operator, extending the 
classical frameworks and permitting a significantly broader range of parameters and 
growth conditions.  

Within this setting, we carry out a comprehensive analysis of nonlinearities given by 
sums of weighted power-type terms, under assumptions dictated by their asymptotic 
behavior relative to the scaled nonlinearity and the sequence of eigenvalues.  
We establish existence and multiplicity results for \eqref{general problem} across all 
the corresponding regimes.  
The precise statements of our main theorems are presented in 
Subsection~\ref{subsectionmaintheorems}. It is worth mentioning that some of these results are also new in the case $b=0$, where the singular weights appear only in the nonlinearities and not in the operator itself. This situation is particularly interesting in the radial setting, where our approach provides new insights into such problems. See Section~\ref{sectionradial}.

\subsection{Variational framework}

We define the natural energy space to look for solutions to problems \eqref{NE problem} and \eqref{general problem}. 
For $N\geq2$, $r\geq1$ and $\eta\in[0,N)$ we denote by $L^r_\eta(\R^N)$ the weighted Lebesgue space $L^r(\R^N;|x|^{-\eta}dx)$. Clearly,  $L^r_0(\R^N)= L^r(\R^N)$. Recall that $L^r_\eta(\R^N)$ is a Banach space with the norm
\[
\|u\|_{L^r_\eta}=\left(\int_{\mathbb{R}^N} \frac{|u|^r}{|x|^\eta} dx \right)^\frac{1}{r}
\]
and it is uniformly convex if $r>1$.
Considering $b\in[0,N)$ and $q\geq1$, we define 
\[
E^q_b(\mathbb R^N)=\left\{u\in L^q_b(\R^N):\nabla u\in (L^2(\R^N))^N\right\},
\]
where $\nabla u = (u_{x_1},\cdots,u_{x_N})$ represents the weak derivatives of $u$, endowed with the norm
\begin{align*}
\|u\| &= \left[ \int_{\mathbb{R}^N} |\nabla u|^2\, dx + \left( \int_{\mathbb{R}^N} \frac{|u|^q}{|x|^b} dx \right)^\frac{2}{q} \right]^\frac{1}{2}.
\end{align*}
We readily see that $E_b^q(\mathbb{R}^N)$ (briefly denoted by $E_b^q$) is a Banach space. 
For $N \ge 3$, the space ${C_c^\infty(\mathbb{R}^N)}$ is dense in $E_b^q(\mathbb{R}^N)$, 
while for $N = 2$ this density holds provided that $q \ge b$. Since we could not locate a direct reference for this rather standard fact,  we include a complete proof in the Appendix (see Proposition~\ref{Density}). Moreover, for $q>1$, recalling that the product space $L^q_b(\R^N)\times (L^2(\R^N))^N$  is uniformly convex with
$$
\|(u,v_1,\cdots,v_N)\|_1=\left(\|u\|_{L^q_b}^2+\sum_{i=1}^N\|v_i\|_{L^2}^2\right)^\frac{1}2,
$$
since the operator $T:E^q_b\to L^q_b(\R^N)\times (L^2(\R^N))^N $
given by $T(u)=(u,\nabla u)$ is an isometry, the Banach space $E^q_b$ is also uniformly convex. In particular, it is reflexive when $q>1$. 

As usual, a key step in formulating the energy functional associated with our problem is 
to determine into which Lebesgue spaces the underlying Sobolev-type space embeds.  For $N\geq2$, $0\le b< 2<q<2^*_b$  and $0\le\eta<N$ let us define
\begin{equation}\label{def2bqeta}
2^*_{b,q,\eta}:=
\begin{cases}
   \vspace{.2cm}
\frac{q(N-\eta)}{N-b}&if \quad N\geq2\quad\text{and}\quad b\leq\eta,\\
  \frac{\eta\,q\,(N-2)+2N(b-\eta)}{b\,(N-2)}&if\quad N\geq3\quad\text{and}\quad 0\le\eta<b .  
\end{cases}
\end{equation}
Note that \(2^*_{b,q,\eta} \le \min\{q,2^*_\eta\}<2^*\) when \(b \le \eta\), while for \(N \ge 3\) and \(\eta \in(0,b)\) 
one has $q< 2^*_{b,q,\eta} < 2^*_{\eta}<2^*$ and for \(\eta=0 < b\) it holds $q< 2^*_{b,q,0} = 2^*_{0}=2^*$.

Under suitable conditions on \(\eta\), we shall prove that 
\(E_b^q\) embeds continuously (and, in some regimes, compactly) into 
\(L_\eta^r(\mathbb{R}^N)\) for $r$ lying between $2^*_{b,q,\eta}$ and $2^*_{\eta}$ with the endpoints included depending on some additional conditions. The precise hypotheses and the corresponding embedding results will be established in 
Section~\ref{secpreliminaries}.

Given a Carath\'eodory function $f:\R^N\times\R\to\R$, we say that $u\in E^q_b$ is a weak solution for the problem
\[
    -\Delta u + \frac{|u|^{q-2}u}{|x|^b} = f(x,u)
    \qquad\text{in }\mathbb{R}^N,
\]
if $f(\cdot,u)\in L^1_{loc}(\R^N)$ and 
\begin{equation*} 
\int_{\mathbb{R}^N}\nabla u\nabla vdx+\int_{\mathbb{R}^N}\frac{|u|^{q-2}uv}{|x|^{b}}dx = \int_{\mathbb{R}^N}f(x,u)v dx,\quad\text{for all}\ v\in C^\infty_c(\R^N).
\end{equation*}

%Due to the density of  ${C_c^\infty(\mathbb{R}^N)}$ in $E_b^q(\mathbb{R}^N)$, a solution to \eqref{NE problem} (resp. \eqref{general problem}) is a function $u\in E^q_b$ satisfying
%\begin{equation*} \begin{array}{c}\displaystyle\int_{\mathbb{R}^N}\nabla u\nabla vdx+\int_{\mathbb{R}^N}\frac{|u|^{q-2}uv}{|x|^{b}}dx = \lambda\int_{\mathbb{R}^N}\frac{|u|^{p-2}uv}{|x|^{a}}dx,\medskip\quad\text{for all}\ v\in E^q_b\\ 
%\left(\text{resp.}\ \ \displaystyle\int_{\mathbb{R}^N}\nabla u\nabla vdx+\int_{\mathbb{R}^N}\frac{|u|^{q-2}uv}{|x|^{b}}dx = \int_{\mathbb{R}^N}f(x,u)vdx,\medskip\quad\text{for all}\ v\in E^q_b\right).
%\end{array}
%\end{equation*}

Thus, the natural energy functionals associated with problems \eqref{NE problem} and 
\eqref{general problem} are $\Phi_\lambda,\Phi:E^q_b\to\R$ defined by
\begin{equation}\label{functionals} \begin{array}{c}\displaystyle\Phi_\lambda(u)=\frac{1}{2}\int_{\mathbb{R}^N}|\nabla u|^2dx+\frac{1}{q}\int_{\mathbb{R}^N}\frac{|u|^q}{|x|^{b}}dx - \frac{\lambda}{p}\int_{\mathbb{R}^N}\frac{|u|^{p}}{|x|^{a}}dx,\medskip\\ 
\displaystyle\Phi(u)=\frac{1}{2}\int_{\R^N}|\nabla u|^2dx+\frac{1}{q}\int_{\mathbb{R}^N} \frac{|u|^q}{|x|^b} dx
-\int_{\mathbb{R}^N} F(x,u)dx,\end{array}
\end{equation}
where \(F(x,s) = \int_0^s f(x,t)\,dt\).
The growth assumptions on \(F\) will be stated in terms of the embeddings introduced above and will ensure that \(\Phi \in C^1\).  Under the corresponding conditions on the pair \((a,p)\), the same embeddings also 
guarantee that \(\Phi_\lambda \in C^1\). Therefore, a function \(u \in E_b^q\) is a solution of \eqref{NE problem} (resp. \eqref{general problem}) if and only if it is a critical point of \(\Phi_\lambda\) (resp. \(\Phi\)).

 \subsection{Main theorems}\label{subsectionmaintheorems}

Before stating the main results, we establish some notations and definitions that will be used throughout this paper. 

Given the definition of $u_t$ presented in \eqref{scalingdef},  for $\eta\in[0,N)$ and $r\geq1$ we have
 \[
\int_{\mathbb{R}^N} \frac{|u_t|^r}{|x|^\eta} dx =t^{r\delta+\eta-N}\int_{\mathbb{R}^N} \frac{|u|^r}{|x|^\eta} dx,\quad\forall t\geq0,\, u\in L^r_\eta(\R^N).
 \]
Notice that $r\delta+\eta-N>0$ for any $r\geq 2^*_{b,q,\eta}$ (see \eqref{def2bqeta}). Since this change of variables is a common argument in this work, from now on we denote
 \begin{equation}\label{l(eta,r)}
\ell(\eta,r):=r\delta+\eta-N\qquad\text{and}\qquad \ell:=\ell(b,q)=\ell(a,p).
 \end{equation}
 Problem \eqref{NE problem} yields a natural classification of possibly different growth regimes for the nonlinearities that we will consider in problem \eqref{general problem}. 

\begin{definition}\label{defscaled}
Assume \eqref{basicparametershypothesis}.
We say that  the pair \((\eta,r)\) is \emph{admissible} if
\[
    0 \le \eta < \frac{N+2}{2}\ \ \big(b\le\eta<2,\text{ if }N=2\big)
    \quad \text{and}\quad
    r \in (\,2^*_{b,q,\eta},\; 2^*_{\eta}\,),\quad\text{with}\quad r> 1.
\]
 %or $\eta=0$ and $r=2^*$. 
 In addition, when \(\eta \le b\) we allow \(r = 2^*_{b,q,\eta}\), and if 
\(\eta \le 2\) and \(N \ge 3\) we include the second endpoint \(r = 2^*_\eta\). We classify an admissible pair \((\eta,r)\) in three types:
\begin{enumerate}
    \item[(i)] \textit{Subscaled,} if \(\ell(\eta,r)< \ell\);
    \item[(ii)] \textit{Scaled,} if \(\ell(\eta,r)= \ell\);
    \item[(iii)] \textit{Superscaled,} if \(\ell(\eta,r)> \ell\).
\end{enumerate}
\end{definition}
The concept of admissible pairs, together with the conditional inclusion of the 
endpoints of the above intervals, is motivated by the embeddings established in 
Lemma~\ref{Lemma Embedding with weight} and it also relates with the nonexistence result given in Proposition \ref{prop nonexistence}. The restriction $r>1$ is necessary for the $C^1$ regularity of the functionals associated to the problems. 

Notice that $(\eta,r)$ is subscaled (scaled, superscaled) if and only if
\[
r< q+\frac{(b-\eta)}{\delta}\quad \left(r= q+\frac{(b-\eta)}{\delta},\quad r> q+\frac{(b-\eta)}{\delta}\right).
\]
Thus, the existence of a subscaled (scaled) pair \((\eta,r)\) requires \(\eta >b\) (\(\eta \ge b\)), 
since  one has
\[
    q + \frac{b - \eta}{\delta} <\frac{q(N-\eta)}{N-b}\ \left(< 2^*_{b,q,\eta}\text{ for }N\ge3\right),\quad\text{for all}\quad \eta\in[0,b).
\]
In other words, when \(0 \le \eta < b\), all admissible pairs are necessarily 
superscaled. On the other hand, the condition \(\eta < 2\) is necessary for the existence of a 
superscaled pair, since
\[
    q + \frac{b - \eta}{\delta} \;\ge\; 2^*_{\eta}
    \qquad \text{whenever } \eta \ge 2.
\]
Moreover, although $0\le\eta<N$ would be a more natural assumption for the singularities, the initial restriction $\eta<(N+2)/2$ in this definition is to ensure that there will always exist a number $r>1$ in the range considered.

\medskip

We are now ready to state the main results of this paper. 
We begin by considering subcritical superscaled problems and establishing the existence of nontrivial solutions. Our first theorem reads as follows.

\begin{theorem}\label{Th-superscaled-subcritical}
Assume \eqref{basicparametershypothesis}. 
Suppose $f(x,t)=\lambda|x|^{-a}|t|^{p-2}t+|x|^{-\eta}h(t)$, with $\lambda\in\R$, $0<\eta<N-q(N-2)/2$ ($\eta\in(b,2)$ if $N=2$) and $h=H'$ for $H:\R\to\R$ a $C^1$ function such that
 \begin{align}\label{AR h}
 |h(t)|\leq c_1|t|^{r_1-1}+c_2|t|^{r_2-1}
 \quad\text{and}\quad
 c |t|^r\leq r H(t)\leq h(t)t,\quad\forall t\in\R,
 \end{align}
for some constants $c,c_1,c_2>0$, and superscaled pairs $(\eta,r_1)$, $(\eta,r_2)$ and $(\eta,r)$, with $2^*_{b,q,\eta}<r_1,r_2<2^*_\eta$ and $q<r<2^*_\eta$.
Then,  problem \eqref{general problem} has a nontrivial solution. If $\lambda$ is not an eigenvalue of \eqref{NE problem}, then \eqref{general problem} has a solution with positive energy.
\end{theorem}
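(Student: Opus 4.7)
The plan is to apply the mountain-pass theorem (or its linking analog) to the $C^1$ functional $\Phi$ on $E^q_b$, treating it as a perturbation of the scaling-invariant functional $\Phi_\lambda$ from \eqref{functionals}. The $C^1$ regularity follows from the continuous embeddings in Section~\ref{secpreliminaries} associated with the scaled admissible pair $(a,p)$ and with the subcritical superscaled pairs $(\eta, r_i)$. The endpoint of the mountain-pass geometry is immediate: fixing any $u \in E^q_b$ with $\int |u|^r |x|^{-\eta}\,dx > 0$, the integrated AR lower bound $r H(s) \geq c|s|^r$ gives
\[\Phi(tu) \leq C_1 t^2 + C_2 t^q + C_3 |\lambda|\, t^p - c'\, t^r \longrightarrow -\infty \quad \text{as } t\to\infty,\]
since $r > q > p > 2$. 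For the lower barrier near zero, the AR upper bound yields $\big|\int H(u)|x|^{-\eta}\,dx\big| \leq C(\|u\|^{r_1}+\|u\|^{r_2})$, which is negligible as $r_i>2$; the competition between the scaled $\lambda$-term $(\lambda/p)\|u\|_{L^p_a}^p$ and the principal part is handled by exploiting the $t^\ell$-homogeneity of $\Phi_\lambda$ along the scaling curves $u\mapsto u_t$ (a consequence of $\ell(a,p)=\ell(b,q)$) together with the Mercuri--Perera spectral framework from \cite{MePe2025}: when $\lambda$ avoids the nonlinear eigenvalues $\{\lambda_k\}$ of \eqref{NE problem}, this provides either a small-sphere barrier (for $\lambda<\lambda_1$) or a linking configuration (for $\lambda_k<\lambda<\lambda_{k+1}$) on which $\Phi\geq\alpha>0$.

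For the Palais--Smale (or Cerami) condition, I would first obtain boundedness of a sequence $(u_n)$ with $\Phi(u_n)$ bounded and $\Phi'(u_n)\to 0$ via the AR combination $\Phi(u_n)-(1/r)\langle\Phi'(u_n),u_n\rangle$: the coefficients $\tfrac12-\tfrac1r,\ \tfrac1q-\tfrac1r,\ \tfrac1p-\tfrac1r$ are all strictly positive since $r>q>p>2$, and the $H$-integral contributes non-negatively by AR. The residual scaled term is controlled by the scale-invariant interpolation
\[\|u\|_{L^p_a}^p \;\leq\; C_1 \int |\nabla u|^2\,dx + C_2 \int \frac{|u|^q}{|x|^b}\,dx,\]
arising from the CKN inequality, with Young exponents summing exactly to $1$ thanks to $\ell(a,p)=\ell(b,q)$. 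After extracting $u_n\rightharpoonup u$ by reflexivity of $E^q_b$, the compact embedding of $E^q_b$ into $L^{r_i}_\eta$ (the subcritical superscaled case in Section~\ref{secpreliminaries}) gives strong convergence of the $h(u_n)$-term; further interpolation between this compact $L^{r_i}_\eta$ convergence and the bounded $L^q_b$ norm delivers strong convergence in $L^p_a$, and the $S_+$-type property of the monotone principal operator $u\mapsto -\Delta u + |u|^{q-2}u|x|^{-b}$, combined with the uniform convexity of $\|\cdot\|$, then upgrades this to $u_n\to u$ strongly in $E^q_b$.

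Finally, the mountain-pass (respectively linking) theorem produces a nontrivial critical point $u$ with critical value $c\geq\alpha>0$, establishing both existence and the positive-energy claim whenever $\lambda$ lies off the nonlinear spectrum. When $\lambda$ coincides with some $\lambda_k$, the barrier $\alpha$ can degenerate to $0$; in that case I would recover a nontrivial solution by approximating with non-eigenvalues $\lambda_n\to\lambda$, obtaining solutions $u_n$ of the perturbed problems, and passing to the limit using the uniform PS estimates. The hard part will be the PS verification in the simultaneous presence of the scaled $\lambda$-term (only continuously, not compactly, embedded, hence a priori susceptible to concentration) and the superscaled $h$-term (compactly embedded but of higher scaling order); the resolution rests on the compactness of the genuinely superscaled embedding together with the scale-invariant CKN interpolation, whose constants must be tracked explicitly enough to close both the boundedness step and the geometric estimates.
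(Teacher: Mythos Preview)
Your PS boundedness argument has a genuine gap. The AR combination $\Phi(u_n)-\tfrac{1}{r}\langle\Phi'(u_n),u_n\rangle$ leaves the term $-\lambda(\tfrac{1}{p}-\tfrac{1}{r})\|u_n\|_{L^p_a}^p$ to be absorbed, and you propose the interpolation $\|u\|_{L^p_a}^p\le C_1\|\nabla u\|_2^2+C_2\|u\|_{L^q_b}^q$. Because $(a,p)$ is \emph{scaled}, both sides have the same $t^\ell$-homogeneity, so the constants $C_1,C_2$ are genuinely fixed: you cannot make either one small by Young without blowing up the other. In fact the sharp form of this inequality is $J(u)\le I(u)/\lambda_1$, and plugging this in shows the resulting coefficients stay positive only for $\lambda<\lambda_1\frac{r-q}{r-p}<\lambda_1$. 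For arbitrary $\lambda$ the argument simply does not close. The paper avoids this by a rescaling contradiction: assuming $\|u_n\|\to\infty$, it projects $u_n$ onto $\mathcal{M}$ via $\tilde u_n=(u_n)_{t_n}$ with $t_n=I(u_n)^{-1/\ell}$, then uses the superscaled gap $\ell(\eta,r)>\ell$ to force $\tilde u_n\to 0$ in $L^r_\eta$, hence weakly in $E^q_b$, and finally combines scaled versions of $\Phi_\lambda(\tilde u_n)$ and $\Phi_\lambda'(\tilde u_n)\tilde u_n$ to get $\tilde u_n\to 0$ in $E^q_b$, contradicting $I(\tilde u_n)=1$. (Incidentally, the embedding $E^q_b\hookrightarrow L^p_a$ \emph{is} compact by Corollary~\ref{corollary_emb}; your concern about concentration there is unfounded, but this does not rescue the boundedness step.)

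Your treatment of the eigenvalue case is also incomplete. Approximating $\lambda$ by non-eigenvalues $\lambda_n$ gives solutions $u_n$ with energies $c_n\ge\alpha_n>0$, but $\alpha_n$ may degenerate to $0$ as $\lambda_n\to\lambda_k$; the limit can then be the trivial critical point, and the ``uniform PS estimates'' you invoke give boundedness but not nontriviality. The paper instead works directly at $\lambda=\lambda_k$ via a scaled local linking (\cite[Corollary~2.31]{MePe2025}): with $A_0=\tilde\Psi^{\lambda_k}$, $B_0=\tilde\Psi_{\lambda_{k+1}}$ one has $i(A_0)=i(\mathcal{M}\setminus B_0)=k$, and the superscaled decay $\tilde G(u_t)=o(t^\ell)$ yields $\Phi(u_t)\le 0$ on $A_0$ and $\Phi(u_t)>0$ on $B_0$ for small $t$; combined with contractibility of a suitable sublevel set $\Phi^\alpha$, this produces a nontrivial critical point without any approximation.
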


\begin{theorem}\label{Th-superscaled-subcritical-multi}
Besides the conditions of Theorem \ref{Th-superscaled-subcritical} we assume that $h$ is an odd function.
Then,  problem \eqref{general problem} has a sequence $(u_k)$ of nontrivial solutions satisfying $\Phi(u_k)\nearrow\infty$.
\end{theorem}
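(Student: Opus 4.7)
The plan is to upgrade the single solution furnished by Theorem \ref{Th-superscaled-subcritical} into a divergent sequence of solutions by exploiting the $\mathbb Z_2$-symmetry that oddness of $h$ gives to $\Phi$, and then applying the symmetric min--max scheme based on the Fadell--Rabinowitz cohomological index used throughout the Mercuri--Perera framework \cite{MePe2025}. The functional $\Phi$ is already $C^1$ on $E_b^q$ by the growth conditions of Theorem \ref{Th-superscaled-subcritical}, and with $h$ odd the primitive $H$ is even, so $\Phi$ is even as required.

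The first analytic step is the Palais--Smale condition at every level. For $(u_n)\subset E_b^q$ with $\Phi(u_n)\to c$ and $\Phi'(u_n)\to 0$, the Ambrosetti--Rabinowitz-type inequality $rH(t)\le h(t)t$ together with $r>q>2$ produces, via the standard linear combination of $\Phi(u_n)$ and $\langle\Phi'(u_n),u_n\rangle$, a uniform $E_b^q$-bound on $u_n$; the scaled $L^p_a$-term is absorbed by the continuous CKN embedding and Young's inequality. After extracting a weak limit $u_n\rightharpoonup u$, the strictly superscaled pairs $(\eta,r_1),(\eta,r_2)$ and the scaled pair $(a,p)$ all sit inside the range of \emph{compact} embeddings established in Section \ref{secpreliminaries}, so the nonlocal remainders $\int|x|^{-\eta}h(u_n)(u_n-u)\,dx$ and $\int|x|^{-a}|u_n|^{p-2}u_n(u_n-u)\,dx$ tend to zero; testing $\Phi'(u_n)-\Phi'(u)$ against $u_n-u$ and invoking the uniform convexity of $E_b^q$ then yields $u_n\to u$ in norm.

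Next I would construct the min--max levels. Pick an increasing chain of finite-dimensional subspaces $W_k\subset E_b^q$ with $\dim W_k=k$. On each $W_k$ all norms are equivalent and the lower bound $H(t)\ge (c/r)|t|^r$ with $r>q$ forces $\Phi(u)\to-\infty$ as $\|u\|\to\infty$ in $W_k$, so there is $R_k>0$ with $\Phi\le 0$ on $W_k\cap\{\|u\|\ge R_k\}$. Setting
\[
c_k=\inf_{A\in\mathcal F_k}\,\sup_{u\in A}\Phi(u),
\]
where $\mathcal F_k$ denotes the family of symmetric compact subsets of $E_b^q$ of Fadell--Rabinowitz cohomological index at least $k$, the subadditivity and monotonicity of the index combined with the geometric bound above and the (PS) condition identify each $c_k$ as a critical value of $\Phi$, and the standard comparison of index with linking dimension forces $c_k\to+\infty$. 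This delivers the desired sequence $(u_k)$ of critical points with $\Phi(u_k)\nearrow\infty$.

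The main obstacle is precisely the divergence $c_k\to+\infty$: one must argue that on symmetric sets of large cohomological index the supremum of $\Phi$ really grows, by comparing $\Phi$ with the pure scaled functional $\Phi_\lambda$ associated with problem \eqref{NE problem}. This comparison relies decisively on the \emph{strict} superscaled character of $(\eta,r_1),(\eta,r_2)$, which renders the $h$-contribution a lower-order perturbation at the relevant scales, and on the compactness of the associated embeddings — exactly the setting the pseudo-index machinery of \cite{MePe2025} is tailored to exploit.
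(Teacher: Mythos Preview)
Your proposal has two genuine gaps.

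First, the min--max values $c_k=\inf_{A\in\mathcal F_k}\sup_A\Phi$ over all compact symmetric sets of index $\ge k$ are identically $-\infty$: since $H(t)\ge (c/r)|t|^r$ with $r>q$, the functional $\Phi$ is unbounded below along every ray, and for $k=1$ the orbit $\{u,-u\}$ already has index $1$. A workable scheme needs, in addition to the finite-dimensional sets $W_k$ on which $\Phi\to-\infty$, a ``lower linking'' set $B$ with $\inf_B\Phi>0$ and index-complement controlled relative to $k$; this is not automatic here because for $\lambda\ge\lambda_1$ there is no small sphere in $E_b^q$ on which $\Phi>0$. The paper addresses precisely this point by exploiting the nonlinear eigenvalue structure: with $k_0=\min\{k:\lambda<\lambda_k\}$ it takes $B_0=\tilde\Psi_{\lambda_{k_0}}\subset\mathcal M$ (so $i(\mathcal M\setminus B_0)=k_0-1$), sets $B=\{u_\rho:u\in B_0\}$, and shows $\Phi(u_\rho)\ge\rho^{\ell}(1-\lambda^+/\lambda_{k_0}+o(1))>0$. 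Only then does the pseudo-index machinery (Propositions~3.42 and~3.44 in \cite{Perera-book}) apply and yield $c_k^*\nearrow\infty$. Your last paragraph gestures toward comparing $\Phi$ with $\Phi_\lambda$, but the specific input needed is this choice of $B_0$ in terms of the sublevel sets of $\tilde\Psi$.

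Second, the Palais--Smale boundedness argument does not go through for large $\lambda$ in the way you sketch. The pair $(a,p)$ is \emph{scaled}, so in the CKN estimate $\|u\|_{L^p_a}^p\le C\|\nabla u\|_2^{p\theta}\|u\|_{L^q_b}^{p(1-\theta)}$ one has exactly $p\theta/2+p(1-\theta)/q=1$; Young's inequality then gives no third ``constant'' term, and the trade-off $\epsilon\|\nabla u\|_2^2+C(\epsilon)\|u\|_{L^q_b}^q$ cannot be absorbed simultaneously into the coefficients $(\tfrac12-\tfrac1r)$ and $(\tfrac1q-\tfrac1r)$ once $\lambda$ is large. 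The paper's Lemma~\ref{PS-superscaled-subcritical} bypasses this by projecting the (assumed unbounded) sequence onto $\mathcal M$ via the scaling, and extracting a contradiction from the strict superscaled nature of $(\eta,r)$: the $h$-term forces $\tilde u_n\to 0$ in $L^r_\eta$, hence weakly in $E_b^q$, after which the combination $r\Phi_\lambda(\tilde u_n)-\Phi_\lambda'(\tilde u_n)\tilde u_n$ yields $\|\tilde u_n\|\to 0$, contradicting $\tilde u_n\in\mathcal M$.
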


In the next results we consider problem \eqref{general problem} with critical nonlinearities as 
\begin{equation}\label{f critical r2}
f(x,u)=\lambda \frac{|u|^{p-2}u}{|x|^a} + \frac{|u|^{2^*_{\eta_1}-2}u}{|x|^{\eta_1}}+ \mu\frac{|u|^{r_2-2}u}{|x|^{\eta_2}}
 \end{equation}
 for $\lambda\in\R$, $\mu\geq0$ and $N\ge3$.  Suppose that $\eta=\eta_1$  satisfies
\begin{align}\label{eta1 small}
&0\le\eta<N-q\frac{N-2}2.
\end{align} 
For $(\eta_2,r_2)$, we assume that it is a  superscaled pair satisfying one of the following conditions:
\begin{align}
&\eta_2\in(0,\eta_1)\quad \text{and}\quad 2^*_{\eta_1}\leq r_2< 2^*_{\eta_2}.\label{eta2 subcri1}\\
&\eta_2\in[\eta_1,2)\, \,(\eta_2=\eta_1\,\, \text{only if} \,\,\eta_1>b)\quad  \text{and} \quad r_2<2^*_{\eta_2},
\quad\text{with}\,\,
r_2> q+(2^*_{\eta_1}-q)\frac{(b-\eta_2)}{b-\eta_1}
\quad\text{if}\quad\eta_2< b. \label{eta2 subcri2}\\
%$\frac{(b-\eta_2)(r_1-q)}{b-\eta_1}>\frac{(b-\eta_2)}{\delta}$ for $\eta_2< b$.
&\eta=\eta_2\,\,\text{satisfies \eqref{eta1 small}}\quad\text{and}\quad r_2=2^*_{\eta_2}.\label{eta2 cri}
\end{align}
These results depend on the position of \(\lambda\) relative to the sequence $(\lambda_k)$ of eigenvalues of problem~\eqref{NE problem}, constructed via the Fadell–Rabinowitz cohomological index.  
See Theorem~\ref{lambdak} for  definition and properties of this sequence.

\begin{theorem}\label{th cri-super-mult}
Assume $N\geq3$, \eqref{basicparametershypothesis}. Suppose that for some $k,m\in\mathbb{N}$ it holds
\[
\lambda_k = \lambda_{k+1} = \dots = \lambda_{k+m-1} < \lambda_{k+m}.
\]
Consider $f$ as in \eqref{f critical r2}, with $\mu \ge 0$, $\eta_1$ satisfying \eqref{eta1 small} and $(\eta_2,r_2)$ superscaled satisfying one of the conditions \eqref{eta2 subcri1}-\eqref{eta2 cri}. Then,
there is a number $\delta_k > 0$ such that
problem~\eqref{general problem} possesses at least $m$ distinct pairs of nontrivial
solutions with positive energy whenever
\(\lambda \in \bigl(\lambda_k - \delta_k,\; \lambda_k\bigr).\)
\end{theorem}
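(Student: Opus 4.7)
The plan is to apply the abstract multiplicity theorem of Mercuri--Perera \cite{MePe2025} to the even $C^1$ functional $\Phi$ defined in \eqref{functionals}. The first step is to establish a local Palais--Smale condition. The weighted $p$-term and the superscaled, subcritical $r_2$-term are compact perturbations by the compact embeddings of Lemma~\ref{Lemma Embedding with weight}, so the only source of noncompactness comes from the critical term $|u|^{2^*_{\eta_1}-2}u/|x|^{\eta_1}$. A concentration-compactness argument adapted to the weighted critical embedding $E_b^q \hookrightarrow L^{2^*_{\eta_1}}_{\eta_1}(\mathbb{R}^N)$ shows that Palais--Smale sequences can lose compactness only through concentration or escape (at the origin or at infinity) of the critical term, with each bubble carrying an energy bounded below by a threshold $c^*>0$ expressible through the sharp constant of the critical CKN inequality. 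Hence $\Phi$ satisfies $(PS)_c$ for every $c\in(0,c^*)$.

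Following the scheme of \cite{MePe2025}, I next introduce min-max levels $c_j$ based on symmetric subsets of Fadell--Rabinowitz cohomological index at least $j$, paralleling the construction of the nonlinear eigenvalues $(\lambda_j)$ in Theorem~\ref{lambdak}. The hypothesis $\lambda<\lambda_k$, together with the variational characterization of $\lambda_k$, yields a local linking-type estimate that ensures $c_j>0$ for all $j\ge k$; the critical points produced therefore have positive energy.

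The decisive step is to establish $c_{k+m-1}<c^*$. Using eigenfunctions associated with $\lambda_k=\cdots=\lambda_{k+m-1}$ and the scaling \eqref{scalingdef}, I would construct a symmetric test set $A$ of cohomological index exactly $k+m-1$. On $A$ the quadratic-plus-$q$-order part of $\Phi_\lambda$ is controlled by $\lambda_k$, producing an estimate of the form
\[
\Phi(u)\le C(\lambda_k-\lambda)-\frac{1}{2^*_{\eta_1}}\int_{\mathbb{R}^N}\frac{|u|^{2^*_{\eta_1}}}{|x|^{\eta_1}}\,dx-\frac{\mu}{r_2}\int_{\mathbb{R}^N}\frac{|u|^{r_2}}{|x|^{\eta_2}}\,dx \quad\text{for }u\in A,
\]
under an appropriate normalization. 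Since the last two terms remain bounded away from zero on $A$, shrinking the gap below $\delta_k$ forces $\sup_A\Phi<c^*$ whenever $\lambda\in(\lambda_k-\delta_k,\lambda_k)$. The abstract theorem of \cite{MePe2025} then supplies $m$ distinct pairs of critical points at levels in $(0,c^*)$.

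The principal obstacle is precisely this last upper-bound estimate: the eigenfunctions of $\lambda_k$ are not extremals of the critical CKN embedding, so a direct Brezis--Nirenberg-type transfer of Talenti/Aubin test functions is unavailable. One must exploit the scaling invariance \eqref{scalingdef} to normalize within each scaling orbit so that the critical $L^{2^*_{\eta_1}}_{\eta_1}$-mass is uniformly bounded below on $A$, and then close the estimate using the gap $\lambda_k-\lambda$; this is what determines $\delta_k$.
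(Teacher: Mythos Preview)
Your overall strategy---apply an abstract index-based multiplicity theorem from \cite{MePe2025}, verify a local $(PS)_c$ condition below a critical threshold $c^*$, and then push the top min-max level below $c^*$ by taking $\lambda$ close to $\lambda_k$---is correct and is what the paper does. But two of your concrete steps are off, and the ``principal obstacle'' you identify has a cleaner resolution than the one you sketch.

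First, building the test set $A_0$ from \emph{eigenfunctions of $\lambda_k$} does not give you cohomological index $k+m-1$: Theorem~\ref{lambdak}(ii) only guarantees $i(E_{\lambda_k})\ge m$. The paper instead takes $A_0$ to be a compact symmetric subset of the \emph{sublevel set} $\mathcal{M}\setminus\tilde\Psi_{\lambda_{k+m-1}+\varepsilon}=\{u\in\mathcal{M}:\tilde\Psi(u)<\lambda_{k+m-1}+\varepsilon\}$, which has index $k+m-1$ by Theorem~\ref{lambdak}(iii); such a compact subset with the same index exists by a standard argument of Degiovanni--Lancelotti. The set $B_0=\tilde\Psi_{\lambda_k}$ then furnishes the other half of the linking.

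Second, your worry that ``eigenfunctions are not CKN extremals'' is a red herring, and no Brezis--Nirenberg transfer is needed. The uniform lower bound on the critical mass over $A_0$ comes instead from a simple interpolation: since $(a,p)$ is scaled and $(\eta_1,2^*_{\eta_1})$ is admissible, Lemma~\ref{lemma-general-interpolation} interpolates $\|u\|_{L^p_a}$ between $\|u\|_{L^{2^*_{\eta_1}}_{\eta_1}}$ and another admissible norm bounded on $\mathcal{M}$; combined with $J(u)\ge 1/(\lambda_{k+m-1}+\varepsilon)$ on $A_0$, this yields $\int |u|^{2^*_{\eta_1}}/|x|^{\eta_1}\ge c_3>0$ uniformly (Lemma~\ref{interpolation estimates}). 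With this in hand the scaling gives $\Phi(u_t)\le t^{\ell}(1-\lambda/(\lambda_k+\varepsilon))-c_3\,t^{\ell_1}$, and optimizing over $t$ produces a bound $\sup_X\Phi\le C\big(1-\lambda/(\lambda_k+\varepsilon)\big)^{\ell_1/(\ell_1-\ell)}$, which is what determines $\delta_k$.

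Finally, for the $(PS)_c$ condition: the paper's argument (Lemma~\ref{lemmaPS-criticalpower+superscaledpower}) is not a bubbling analysis. Boundedness of $(PS)$ sequences is obtained via the scaling, and then, after Brezis--Lieb, the key step is to apply the \emph{Pohozaev identity} \eqref{Pohozaev identity} to the weak limit (which is a solution) in order to kill the residual terms and force the threshold $c^*=\frac{2-\eta_1}{2(N-\eta_1)}S_{\eta_1}^{(N-\eta_1)/(2-\eta_1)}$. Your concentration-compactness sketch would likely work but is heavier than what is needed here.
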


As an immediate consequence of this theorem we get the following 
\begin{corollary}\label{corollary-th}
If $\lambda_k<\lambda_{k+1}$ for some $k\geq1$, then, for $f$ as in Theorem \ref{th cri-super-mult}, there is $\delta_k>0$ such that problem \eqref{general problem} has a pair of nontrivial solutions at a positive energy level for all $\lambda\in(\lambda_k-\delta_k,\lambda_k)$.
\end{corollary}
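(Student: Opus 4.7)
The plan is to read this as a direct specialization of Theorem~\ref{th cri-super-mult} to the case $m=1$, so the proof is essentially bookkeeping. Given the hypothesis $\lambda_k < \lambda_{k+1}$ for some $k \ge 1$, the strict inequality $\lambda_k < \lambda_{k+m}$ of Theorem~\ref{th cri-super-mult} is satisfied with $m=1$, and the block of equalities $\lambda_k = \dots = \lambda_{k+m-1}$ collapses to the trivial statement $\lambda_k = \lambda_k$.

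Concretely, I would proceed as follows. First, I would verify that the nonlinearity $f$ in the statement of the corollary is precisely the $f$ of form \eqref{f critical r2} assumed in Theorem~\ref{th cri-super-mult}, and that the conditions on $\eta_1$ via \eqref{eta1 small} and on the superscaled pair $(\eta_2,r_2)$ through one of \eqref{eta2 subcri1}--\eqref{eta2 cri} are inherited directly, since the corollary invokes ``$f$ as in Theorem \ref{th cri-super-mult}''. Second, with $m=1$ satisfying the eigenvalue configuration by hypothesis, Theorem~\ref{th cri-super-mult} supplies a $\delta_k > 0$ such that for every $\lambda \in (\lambda_k - \delta_k, \lambda_k)$ the problem \eqref{general problem} admits at least $m = 1$ distinct pair of nontrivial solutions at positive energy level. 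Finally, I would record that this pair of solutions is precisely what the corollary asserts.

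There is no real obstacle: since Theorem~\ref{th cri-super-mult} is already quantified over arbitrary $k,m \in \mathbb{N}$ subject only to the assumption $\lambda_k = \cdots = \lambda_{k+m-1} < \lambda_{k+m}$, the corollary is obtained by the trivial choice $m=1$, which makes the equality block vacuous and reduces the spectral assumption to the simple gap condition $\lambda_k < \lambda_{k+1}$. Accordingly, the proof is simply: \emph{apply Theorem \ref{th cri-super-mult} with $m=1$}, and the conclusion follows.
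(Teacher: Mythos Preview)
Your proposal is correct and matches the paper's own treatment: the paper introduces the corollary as ``an immediate consequence'' of Theorem~\ref{th cri-super-mult} without further argument, and your specialization to $m=1$ is exactly the intended reading.
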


Instead of assuming that \(\lambda\) is close to \(\lambda_k\) in Theorem~\ref{th cri-super-mult}, one may alternatively require \(\mu\) to be sufficiently large.  In this case, however, the exponent \(r_2\) is necessarily restricted to the 
subcritical range.  We have the following result.

\begin{theorem}\label{th cri any lambda}
Assume $N\ge 3$ and \eqref{basicparametershypothesis}.  Consider $f$ as in \eqref{f critical r2}. Then, for every $\lambda\in\R$, $\eta_1$ satisfying \eqref{eta1 small},  $(\eta_2,r_2)$ superscaled satisfying one of the conditions \eqref{eta2 subcri1}-\eqref{eta2 subcri2}, and  $m \in\mathbb{N}$, there exists  $\mu_m>0$ such that, for all $\mu>\mu_m$,  
problem~\eqref{general problem} admits at least $m$ distinct pairs of nontrivial solutions with positive energy.  
In particular, the number of these solutions tends to infinity as $\mu\to\infty$.
\end{theorem}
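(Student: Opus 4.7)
The plan is to apply the abstract cohomological-index-based min-max theorem from \cite{MePe2025} to the even functional $\Phi$ associated with \eqref{general problem} and $f$ as in \eqref{f critical r2}, constructing a sequence of positive min-max values $c_1^\mu\le c_2^\mu\le\cdots$ and showing that, for $\mu$ sufficiently large, the first $m$ of these values lie in an interval in which a local Palais--Smale condition holds.

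The starting point is that, under both \eqref{eta2 subcri1} and \eqref{eta2 subcri2}, the pair $(\eta_2,r_2)$ satisfies $r_2<2^*_{\eta_2}$, so the embedding $E_b^q\hookrightarrow L^{r_2}_{\eta_2}$ is \emph{compact} by the embedding results of Section~\ref{secpreliminaries}; hence the $\mu$-term in $\Phi$ is a compact perturbation and the only source of non-compactness is the critical nonlinearity $|u|^{2^*_{\eta_1}-2}u/|x|^{\eta_1}$. A concentration--compactness analysis adapted to the weighted CKN embedding $E_b^q\hookrightarrow L^{2^*_{\eta_1}}_{\eta_1}$ should then yield a threshold $c^*>0$, depending only on the best constant in this embedding and on the parameters $N,b,q,p,\eta_1$, such that every $(\mathrm{PS})_c$ sequence with $c<c^*$ admits a convergent subsequence. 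Crucially, $c^*$ is \emph{independent} of $\lambda$ and of $\mu$.

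Next, I would bound the min-max values from above by evaluating $\Phi$ on symmetric subsets of prescribed cohomological index contained in fixed finite-dimensional subspaces $V_j\subset C_c^\infty(\R^N\setminus\{0\})$. Since all norms are equivalent on the finite-dimensional $V_j$, for $v$ in the unit sphere of $V_j$ and $t\ge 0$ one obtains an estimate of the form
\[
\Phi(tv)\le C_1 t^2 + C_2 t^q + |\lambda|C_3 t^p + C_4 t^{2^*_{\eta_1}} - C_5 \mu\, t^{r_2},
\]
with constants depending only on $V_j$. Because $r_2>2$, the right-hand side has a finite maximum, attained at $t^*(\mu)\sim\mu^{-1/(r_2-2)}$ and of order $\mu^{-2/(r_2-2)}$. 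Consequently $c_j^\mu\to 0$ as $\mu\to\infty$, so for each fixed $m\in\mathbb{N}$ there exists $\mu_m$ with $0<c_j^\mu<c^*$ for every $j\in\{1,\dots,m\}$ and every $\mu>\mu_m$; the strict positivity follows from the fact that $p,q,2^*_{\eta_1},r_2>2$ imply that $\Phi$ is bounded below by a positive constant on a sufficiently small sphere in $E_b^q$, which any set of positive index must intersect. The abstract result of \cite{MePe2025} then produces $m$ distinct pairs of nontrivial critical points of $\Phi$ at the levels $c_1^\mu,\dots,c_m^\mu$, all with positive energy.

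The main obstacle is the first step: establishing a compactness threshold that is genuinely independent of $\mu$. The non-standard $E_b^q$-norm (mixing an $L^2$ gradient term with a weighted $L^q$ term), the coexistence of the two singular weights $|x|^{-b}$ and $|x|^{-\eta_1}$, and the invariance of the associated model under the scaling \eqref{scalingdef} make the concentration--compactness analysis delicate: one must rule out possible bubbling both at the origin and at infinity, show that concentration is carried by a discrete set of weighted-Sobolev profiles, and verify that each such bubble consumes at least $c^*$ of energy regardless of the size of $\mu$.
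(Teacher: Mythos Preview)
Your high-level plan matches the paper's: verify a $(\mathrm{PS})_c$ threshold $c^*>0$ independent of $\mu$ and $\lambda$, construct symmetric sets of large index on which $\sup\Phi$ can be pushed below $c^*$ by taking $\mu$ large, and link against a set on which $\inf\Phi>0$. The compactness threshold step is less delicate than you fear; the paper obtains it by a Brezis--Lieb argument combined with the Pohozaev identity (Lemma~\ref{lemmaPS-criticalpower+superscaledpower}).

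The genuine gap is your positivity step. You claim that since $p,q,2^*_{\eta_1},r_2>2$, the functional $\Phi$ is bounded below by a positive constant on a sufficiently small sphere in $E_b^q$. This is \emph{false} as soon as $\lambda\ge\lambda_1$. Indeed, the term $-\frac{\lambda}{p}\|u\|_{L^p_a}^p$ is \emph{scaled}: it obeys the same scaling law as $I(u)$, so there is no smallness gain near the origin. Concretely, let $u_1\in\mathcal{M}$ be a first eigenfunction, so $J(u_1)=1/\lambda_1$; then $\Phi_\lambda((u_1)_t)=t^{\ell}(1-\lambda/\lambda_1)<0$ for every $t>0$, and the remaining terms in $\Phi$ only make it more negative. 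The curve $t\mapsto(u_1)_t$ meets every sphere $\{\|u\|=\rho\}$, so $\inf_{\|u\|=\rho}\Phi<0$ for all $\rho>0$. Your linking geometry therefore collapses for any $\lambda\ge\lambda_1$, which is precisely the regime the theorem must cover.

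The paper circumvents this by \emph{not} working with the full sphere (or the full $\mathcal{M}_\rho$). It sets $k_0=\min\{k:\lambda<\lambda_k\}$ and takes $B_0=\tilde{\Psi}_{\lambda_{k_0}}=\{u\in\mathcal{M}:\tilde{\Psi}(u)\ge\lambda_{k_0}\}$; on $B=\{u_\rho:u\in B_0\}$ one has $\Phi(u_\rho)\ge\rho^{\ell}(1-\lambda^+/\lambda_{k_0})+o(\rho^{\ell})>0$ for small $\rho$. The price is that $i(\mathcal{M}\setminus B_0)=k_0-1$, so the counting of pairs is obtained via the pseudo-index machinery of Proposition~\ref{Theorem 2.33 MP}, choosing $A_0$ a compact symmetric set of index $k_0+m-1$ inside a sublevel set of $\tilde{\Psi}$. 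The upper estimate uses the paper's scaling $u_t$ (not $tv$), giving $\Phi(u_t)\le\beta t^{\ell}-\mu c_0 t^{\ell_2}$; the superscaled condition $\ell_2>\ell$ is exactly what drives $\sup_X\Phi$ below $c^*$ as $\mu\to\infty$. Note your bound with $+C_4t^{2^*_{\eta_1}}$ also has the wrong sign (the critical term is negative in $\Phi$), and under \eqref{eta2 subcri2} one may have $r_2<q<2^*_{\eta_1}$, so with ordinary scaling the exponent bookkeeping is more delicate than you indicate.
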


From now on, let us turn our attention to nonlinearities that display at least a subscaled pair $(\eta,r)$ in terms of  Definition \ref{defscaled}. The first result in this setting concerns a single subscaled term and gives existence of infinitely many solutions on negative energy levels.

\begin{theorem}\label{Th-single-subscaled}
Assume \eqref{basicparametershypothesis}. For a subscaled pair $(\eta,r)$, % with $b<\eta<(N+2)/2$, 
consider 
$$
f(x,t)=\frac{|t|^{r-2}t}{|x|^\eta},\quad a.e.\, x\in\R^N, \forall t\in\R.
$$
  Then problem \eqref{general problem} has a sequence of solutions $(u_k)$ at negative levels such that $\Phi(u_k)\nearrow 0$.
\end{theorem}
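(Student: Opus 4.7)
My plan is to cast this as a Clark-type multiplicity result driven by the Fadell--Rabinowitz cohomological index $i$, in the spirit of the abstract framework of \cite{MePe2025} underlying Theorem~\ref{lambdak}. Since $f(x,t)=|x|^{-\eta}|t|^{r-2}t$ is odd in $t$, the associated functional $\Phi$ in \eqref{functionals} is even with $\Phi(0)=0$; the crucial ingredient is to squeeze $\Phi$ between a coercive positive part and small symmetric negative bumps produced by the scaling \eqref{scalingdef}.

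For the lower bound, I would invoke a Caffarelli--Kohn--Nirenberg interpolation yielding $\theta\in(0,1)$ and $C>0$ with
\[
\norm{u}_{L^r_\eta}\le C\,\norm{\nabla u}_{L^2}^{\theta}\,\norm{u}_{L^q_b}^{1-\theta},
\]
the exponent being uniquely dictated by scale-invariance, namely $\ell(\eta,r)/r=\theta\,\ell/2+(1-\theta)\,\ell/q$; a direct check shows $\theta\in(0,1)$ precisely throughout the admissible range $r\in(2^*_{b,q,\eta},2^*_\eta)$. Raising to the $r$-th power and writing $X=\norm{\nabla u}_{L^2}^2$, $Y=\norm{u}_{L^q_b}^q$, one gets $\norm{u}_{L^r_\eta}^r\le CX^{r\theta/2}Y^{r(1-\theta)/q}$, and the sum of exponents equals $r\theta/2+r(1-\theta)/q=\ell(\eta,r)/\ell<1$ by the subscaled hypothesis. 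Weighted AM--GM then yields, for every $\varepsilon>0$, $\norm{u}_{L^r_\eta}^r\le\varepsilon(X+Y)+C_\varepsilon$, so that for $\varepsilon$ small enough
\[
\Phi(u)\ge c_1\bigl(\norm{\nabla u}_{L^2}^2+\norm{u}_{L^q_b}^q\bigr)-C,
\]
which makes $\Phi$ bounded below and coercive on $E_b^q$. Using this together with the compact embedding $E_b^q\hookrightarrow L^r_\eta$ for the subscaled admissible pair (Lemma~\ref{Lemma Embedding with weight}) and the uniform convexity of $E_b^q$, the (PS) condition holds at every level.

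To build symmetric sets with large FR index on which $\Phi$ is strictly negative, I exploit the scaling directly. Fix a $k$-dimensional subspace $V_k\subset E_b^q$ with unit sphere $S_k$; by finite-dimensionality,
\[
A(u)=\tfrac12\norm{\nabla u}_{L^2}^2+\tfrac1q\norm{u}_{L^q_b}^q,\qquad B(u)=\tfrac1r\norm{u}_{L^r_\eta}^r
\]
are continuous and strictly positive on $S_k$, with $B$ bounded below away from $0$. From the scaling identity
\[
\Phi(u_t)=t^\ell A(u)-t^{\ell(\eta,r)}B(u)
\]
and the subscaled inequality $\ell>\ell(\eta,r)>0$, a single $t_k>0$ can be chosen small enough that $\sup_{u\in S_k}\Phi(u_{t_k})<0$. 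The set $A_k:=\{u_{t_k}:u\in S_k\}$ is the image of $S_k$ under the odd continuous map $u\mapsto u_{t_k}$, so $i(A_k)\ge i(S_k)=k$.

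Finally, I define the minimax values $c_k=\inf_{A\in\mathcal F_k}\sup_A\Phi$, where $\mathcal F_k$ is the family of compact symmetric subsets of $E_b^q\setminus\{0\}$ with $i(A)\ge k$. A standard $i$-equivariant deformation argument under (PS) makes each $c_k$ a critical value, and $A_k\in\mathcal F_k$ delivers $c_k\le\sup_{A_k}\Phi<0$ and $c_k\le c_{k+1}$. The classical index multiplicity bookkeeping rules out accumulation of $(c_k)$ at any $\bar c\in(-\infty,0)$: coincidences $c_k=\cdots=c_{k+m-1}$ force the critical set at that level to have $i\ge m$, while a strict accumulation would, via the Bartsch--Clapp deformation around the compact set $K_{\bar c}$, lower $c_k$ below $\bar c$; both outcomes contradict the finiteness of $i(K_{\bar c})$ guaranteed by (PS). Hence $c_k\nearrow 0^-$, producing the desired sequence $(u_k)$ with $\Phi(u_k)=c_k\nearrow 0$. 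The main obstacle I expect is pinning the CKN interpolation to the subscaled scaling identity precisely enough that the product estimate's exponents add to strictly less than $1$, unlocking the AM--GM absorption; once this foothold is in place, the remainder is a fairly standard Clark--Fadell--Rabinowitz scheme.
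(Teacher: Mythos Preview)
Your proposal is correct and follows the same Clark-type minimax scheme based on the Fadell--Rabinowitz index that the paper uses (its Proposition~\ref{critical point result}). The tactical choices differ in two places, and yours are the more elementary. For coercivity and boundedness below, you argue directly via the CKN interpolation \eqref{embedding nonradial} together with the scaling identity $r\theta/2+r(1-\theta)/q=\ell(\eta,r)/\ell<1$ and Young's inequality; the paper instead notes that $\tilde f(u_t)v_t=o(t^\ell)\|v\|$ as $t\to\infty$ and cites an abstract coercivity lemma from \cite{MePe2025}. For the negative test sets with index $\ge k$, you take scaled unit spheres $A_k=\{u_{t_k}:u\in S_k\}$ of finite-dimensional subspaces, the classical Clark construction; the paper instead uses $M_t=\{u_t:u\in\tilde\Psi^{\lambda_k}\}$, i.e., scalings of sublevel sets of $\tilde\Psi$ on $\mathcal{M}$, invoking Theorem~\ref{lambdak}\ref{lambdak(iii)} to compute $i(M_t)\ge k$ and Lemma~\ref{interpolation estimates} for the uniform lower bound on $\|u\|_{L^r_\eta}$. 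Your route is self-contained and avoids the nonlinear-eigenvalue machinery altogether; the paper's route showcases how that structure feeds the argument, which becomes essential in the later theorems where finite-dimensional spheres no longer suffice.
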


For the next theorem, recall the notation 
$\ell(\eta,r)=r\delta+\eta-N$ and $ \ell=\ell(b,q)=\ell(a,p)$, given in \eqref{l(eta,r)}. 
\begin{theorem}\label{Th-subscased+scaled+superscaled1} 
 Assume \eqref{basicparametershypothesis}. Consider 
$$
f(x,t)=\lambda\frac{|t|^{p-2}t}{|x|^a}+\frac{|t|^{r-2}t}{|x|^\eta}-\frac{|t|^{{r_1}-2}t}{|x|^{\eta_1}},\quad a.e.\, x\in\R^N, \forall t\in\R,
$$
 where $(\eta,r)$ is subscaled. % with $b<\eta<(N+2)/2$.
 Suppose also that $(\eta_1,r_1)$ is an admissible pair satisfying 
 $$
       \ell(\eta,r)<\ell(\eta_1,r_1) \quad\mbox{for}\,\,\lambda\leq0\quad\mbox{or}\quad
        (\eta_1,r_1)\text{ superscaled }\quad{for}\,\,\lambda>0.
   $$
   Then problem \eqref{general problem} has a sequence of solutions $(u_k)$ at negative levels such that  $\Phi(u_k)\nearrow 0$.
    
\end{theorem}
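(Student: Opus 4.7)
The plan is to apply a symmetric critical-point theorem of Clark/Krasnoselskii type, based on the Fadell--Rabinowitz cohomological index already used in this paper, to the even $C^1$ functional
\[
\Phi(u)=\tfrac12\int_{\R^N}|\nabla u|^2\,dx+\tfrac1q\int_{\R^N}\frac{|u|^q}{|x|^b}\,dx-\tfrac{\lambda}{p}\int_{\R^N}\frac{|u|^p}{|x|^a}\,dx-\tfrac1r\int_{\R^N}\frac{|u|^r}{|x|^\eta}\,dx+\tfrac{1}{r_1}\int_{\R^N}\frac{|u|^{r_1}}{|x|^{\eta_1}}\,dx.
\]
The admissibility of $(a,p)$, $(\eta,r)$ and $(\eta_1,r_1)$, together with the embeddings of Section~\ref{secpreliminaries}, guarantees that $\Phi\in C^1(E^q_b,\R)$, is even, satisfies $\Phi(0)=0$, and that its critical points are the weak solutions of \eqref{general problem}.

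The first and most delicate step is to prove that $\Phi$ is bounded below on $E^q_b$. Under the scaling $u\mapsto u_t$, the first three terms of $\Phi$ all carry the exponent $\ell$, the subscaled term carries $\ell(\eta,r)<\ell$, and the confining term carries $\ell(\eta_1,r_1)$, which by hypothesis is strictly larger than $\ell(\eta,r)$ in both sign regimes and also strictly larger than $\ell$ whenever $\lambda>0$. I would convert this scaling heuristic into a global control by using weighted CKN-type interpolation inequalities of the form
\[
\int_{\R^N}\frac{|u|^r}{|x|^\eta}\,dx\;\text{and, if } \lambda>0,\ \int_{\R^N}\frac{|u|^p}{|x|^a}\,dx
\]
$\le C\,(\text{suitable product of powers of}\ \|\nabla u\|_2^2,\ \|u\|_{L^q_b}^q,\ \|u\|_{L^{r_1}_{\eta_1}}^{r_1})$, with the exponents forced by matching the scalings on both sides, and then applying Young's inequality to absorb the negative contributions into the positive ones. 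This yields $\inf_{E^q_b}\Phi>-\infty$.

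Once boundedness from below is secured, the Palais--Smale condition at every level $c<0$ follows from compact embeddings of $E^q_b$ into $L^p_a(\R^N)$, $L^r_\eta(\R^N)$ and $L^{r_1}_{\eta_1}(\R^N)$ combined with the uniform convexity of $E^q_b$ via a Br\'ezis--Lieb argument. To produce the critical values, I would, for each $k\in\mathbb{N}$, pick a $k$-dimensional subspace $V_k\subset E^q_b$ and consider the symmetric compact set $A_{k,t}=\{u_t:u\in V_k,\ \|u\|_{E^q_b}=1\}$: compactness of the finite-dimensional unit sphere together with $\ell(\eta,r)<\min\{\ell,\ell(\eta_1,r_1)\}$ forces $\sup_{A_{k,t}}\Phi<0$ for $t>0$ sufficiently small, while $\mathrm{ind}(A_{k,t})=k$. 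The minimax values $c_k:=\inf_{\mathrm{ind}(A)\ge k}\sup_{u\in A}\Phi(u)$ are then negative critical values by the standard index-based deformation lemma, and they cluster only at $0$ because any negative accumulation would, via PS, produce a compact symmetric critical set of infinite index. The hard part will be the boundedness from below: it requires orchestrating three weighted Lebesgue norms whose singular weights $|x|^{-a}$, $|x|^{-\eta}$ and $|x|^{-\eta_1}$ are generally unrelated, so that elementary H\"older interpolation does not suffice and one must invoke the CKN machinery developed earlier in the paper.
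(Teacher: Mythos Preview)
Your overall strategy matches the paper's: prove coercivity via interpolation and Young, verify the Palais--Smale condition, then apply the Clark-type minimax scheme (Proposition~\ref{critical point result}). Two points deserve attention.

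\textbf{A genuine gap in the PS step.} You claim that the embedding $E^q_b\hookrightarrow L^{r_1}_{\eta_1}(\R^N)$ is compact. But an \emph{admissible} pair $(\eta_1,r_1)$ may sit at the critical endpoint $r_1=2^*_{\eta_1}$ (Definition~\ref{defscaled}), where Lemma~\ref{Lemma Embedding with weight} gives only a continuous, not compact, embedding. The paper treats this case separately: because the $r_1$-term enters $f$ with a \emph{minus} sign, after passing to the weak limit and applying Brezis--Lieb one obtains
\[
\int_{\R^N}|\nabla(u_n-u)|^2\,dx+\int_{\R^N}\frac{|u_n-u|^q}{|x|^b}\,dx
=-\int_{\R^N}\frac{|u_n-u|^{2^*_{\eta_1}}}{|x|^{\eta_1}}\,dx+o(1),
\]
which forces strong convergence in $E^q_b$ without any compactness of that embedding. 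Your write-up misses this sign observation; without it the PS argument is incomplete in the critical case.

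\textbf{A minor imprecision.} Boundedness from below alone does not yield boundedness of PS sequences; you need coercivity. Your interpolation-plus-Young scheme actually delivers coercivity (and the paper states it that way, using Lemma~\ref{lemma-general-interpolation} when $\lambda>0$ to interpolate $\|u\|_{L^p_a}^p$ between $\|u\|_{L^{r_1}_{\eta_1}}^{r_1}$ and a subscaled norm), so this is easily fixed by stating the stronger conclusion.

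Your choice of finite-dimensional spheres $A_{k,t}$ for the minimax is a valid and more elementary alternative to the paper's sets $M_t=\{u_t:u\in\tilde\Psi^{\lambda_k}\}$; the paper's route exploits Lemma~\ref{interpolation estimates} and the eigenvalue theory, but both produce $c_k<0$.
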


An interesting feature of the result above is that it also covers the case $b=\eta_1=0$, despite the fact that the embeddings $E^q_0\hookrightarrow L^{r_1}(\mathbb{R}^N)$ are not compact for any $q\le r_1\le 2^*$ (or $q\le r_1<\infty$ if $N=2$).

\medskip

The next theorem concerns asymptotically scaled nonlinearities.  It requires that \(\lambda\) is not an eigenvalue of \eqref{NE problem}; that is, \(\lambda\) must not only differ from the sequence of eigenvalues obtained in Theorem~\ref{lambdak}, but also be such that \eqref{NE problem} admits no nontrivial solution.  From Theorem~\ref{lambdak}, it follows immediately that there are no eigenvalues in the  interval \((-\infty,\lambda_1)\), where \(\lambda_1>0\) is defined in that theorem.

\begin{theorem}\label{Th-subscased+scaled} 
 Assume \eqref{basicparametershypothesis}. Consider 
$$
f(x,t)=\lambda\frac{|t|^{p-2}t}{|x|^a}+\frac{|t|^{r-2}t}{|x|^\eta},\quad a.e.\, x\in\R^N, \forall t\in\R,
$$
 where $(\eta,r)$ is subscaled. Suppose that $\lambda$ is not an eigenvalue for \eqref{NE problem}. Then problem \eqref{general problem} has a sequence of solutions $(u_k)$ at negative levels such that  $\Phi(u_k)\nearrow 0$.
\end{theorem}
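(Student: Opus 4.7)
The plan is to adapt the symmetric minimax scheme employed in the proof of Theorem~\ref{Th-subscased+scaled+superscaled1}, which uses the Fadell--Rabinowitz cohomological index applied to the even $C^1$ functional $\Phi$, to the present setting where the superscaled damping term responsible for coercivity and compactness is absent. The key replacement is the hypothesis that $\lambda$ is not an eigenvalue of \eqref{NE problem}, which will be used to restore compactness of Palais--Smale sequences.

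The geometric step is straightforward. For each $k\ge 1$, fix a $k$-dimensional subspace $V_k\subset C_c^\infty(\mathbb{R}^N)\subset E^q_b$ (density by Proposition~\ref{Density}). Denoting by $\Phi_\lambda(u)=\tfrac{1}{2}\|\nabla u\|^2+\tfrac{1}{q}\|u\|_{L^q_b}^q-\tfrac{\lambda}{p}\|u\|_{L^p_a}^p$ the eigenvalue functional, which is homogeneous of degree $\ell$ under the scaling $u\mapsto u_t$, the identity
\[
\Phi(u_t)=t^\ell\,\Phi_\lambda(u)-\tfrac{t^{\ell(\eta,r)}}{r}\|u\|_{L^r_\eta}^r
\]
combined with the strict inequality $0<\ell(\eta,r)<\ell$ and the equivalence of norms on the finite-dimensional space $V_k$, provides a $t_k>0$ so small that $\Phi(u_{t_k})<0$ uniformly for $u\in V_k$ with $\|u\|=1$. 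The resulting set $A_k=\{u_{t_k}:u\in V_k,\,\|u\|=1\}$ is homeomorphic to $S^{k-1}$ and hence has Fadell--Rabinowitz index $k$.

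The main obstacle is verifying a Palais--Smale condition at each negative level $c<0$, because the embedding $E^q_b\hookrightarrow L^p_a$ associated with the scaled pair $(a,p)$ is continuous but not compact. For a $(\mathrm{PS})_c$ sequence $u_n$, I would first establish boundedness in $E^q_b$ via a subscaled CKN interpolation $\|u\|_{L^r_\eta}^r\le C\|\nabla u\|^{2\alpha_r}\|u\|_{L^q_b}^{\beta_r q}$ with $\alpha_r+\beta_r=\ell(\eta,r)/\ell<1$, which, after a three-term Young inequality, absorbs the $L^r_\eta$ term into a small fraction of $\tfrac{1}{2}\|\nabla u_n\|^2+\tfrac{1}{q}\|u_n\|_{L^q_b}^q$; the scaled $L^p_a$ term is then controlled by combining $\Phi(u_n)$ with $\Phi'(u_n)[u_n]$. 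Passing to a weak limit $u_n\rightharpoonup u$ and invoking the compact embedding $E^q_b\hookrightarrow\hookrightarrow L^r_\eta$ (Lemma~\ref{Lemma Embedding with weight}, available since $(\eta,r)$ is subscaled with $b<\eta<2$), one obtains $u_n\to u$ in $L^r_\eta$ and that $u$ solves \eqref{general problem} weakly. Failure of strong convergence would leave a residual $v_n=u_n-u$ that is an approximate critical sequence for $\Phi_\lambda$; by a Brezis--Lieb decomposition and a rescaling argument exploiting the scaling homogeneity of $\Phi_\lambda$, the weak limit of the (rescaled) $v_n$ would be a nontrivial critical point of $\Phi_\lambda$, i.e., a nonzero solution of \eqref{NE problem}, contradicting the non-eigenvalue hypothesis. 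With $(\mathrm{PS})_c$ at hand, the standard index-based minimax values $c_k=\inf_{A\in\mathcal{F}_k}\sup_A\Phi$ over closed symmetric sets of Fadell--Rabinowitz index at least $k$ in an appropriate sublevel set satisfy $c_k\le\sup_{A_k}\Phi<0$, are critical values of $\Phi$ by the deformation lemma, and a standard argument forces $c_k\nearrow 0$.
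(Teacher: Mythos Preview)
Your proposal has two genuine gaps, both tied to a misconception: the embedding $E^q_b\hookrightarrow L^p_a(\mathbb{R}^N)$ \emph{is} compact (Corollary~\ref{corollary_emb}), so your elaborate Brezis--Lieb/rescaling argument for strong convergence is unnecessary. Once a $(PS)_c$ sequence is bounded, strong convergence follows immediately from \eqref{H7} and the compactness of both relevant embeddings.

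The first gap is in your boundedness argument. Absorbing the subscaled $L^r_\eta$ term via CKN and Young is fine, but the scaled $L^p_a$ term cannot be ``controlled by combining $\Phi(u_n)$ with $\Phi'(u_n)[u_n]$'': since $(a,p)$ is scaled, $\|u\|_{L^p_a}^p$ has the same scaling weight $\ell$ as $I(u)$, and no algebraic combination of $\Phi$ and $\Phi'[\cdot]$ eliminates it while retaining coercivity (try it: any such combination leaves either the gradient or the $L^q_b$ term with the wrong sign). The non-eigenvalue hypothesis must be used \emph{here}, not later: assuming $\|u_n\|\to\infty$, project to $\tilde u_n=\pi(u_n)\in\mathcal{M}$; the subscaled perturbation vanishes in the limit and $(\tilde u_n)$ becomes a bounded $(PS)_0$ sequence for $\Phi_\lambda$ on $\mathcal{M}$, which by \eqref{H7}--\eqref{H9} converges strongly to a critical point of $\Phi_\lambda$ in $\mathcal{M}$, i.e.\ an eigenfunction --- contradiction. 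This is the content of \cite[Lemma~2.26]{MePe2025}, which the paper simply cites.

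The second gap is that you never verify $c_k>-\infty$. For $\lambda>\lambda_1$ the functional $\Phi$ is \emph{unbounded below}: pick $u\in\mathcal{M}$ with $\tilde\Psi(u)<\lambda$, so $\Phi_\lambda(u)<0$, and send $t\to\infty$ in $\Phi(u_t)=t^\ell\Phi_\lambda(u)-t^{\ell(\eta,r)}r^{-1}\|u\|_{L^r_\eta}^r$. Proposition~\ref{critical point result} requires $-\infty<c_k<0$ explicitly, and the vague phrase ``in an appropriate sublevel set'' does not address this. The paper handles it by an index argument: setting $k_0=\min\{k:\lambda<\lambda_k\}$ and $Y=\{u_t:u\in\tilde\Psi_{\lambda_{k_0}},\,t\ge0\}$, one shows $M\cap Y\neq\emptyset$ for every $M\in\mathcal{F}_k$ with $k\ge k_0$ (otherwise $\pi|_M$ would map oddly into $\mathcal{M}\setminus\tilde\Psi_{\lambda_{k_0}}$, forcing $i(M)\le k_0-1$), and $\Phi$ is bounded below on $Y$ since $\Phi_\lambda(u)\ge 0$ for $u\in\tilde\Psi_{\lambda_{k_0}}$.
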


Notice that this theorem includes 
Theorem~\ref{Th-single-subscaled} as the particular case \(\lambda = 0\).  
Indeed, if \(\lambda \le 0\), the associated functional is coercive, and the proof 
proceeds exactly as in Theorem~\ref{Th-single-subscaled}.  
The genuinely new aspect of the present theorem concerns the case \(\lambda > 0\), 
where coercivity is lost and the verification of the \((PS)\) condition, as well as the 
geometric properties of the functional, requires a different argument.

The next result still gives infinitely many solutions in negative levels, but now with a critical nonlinearity with positive sign, perturbed with a subscaled nonlinearity with a small parameter. 

\begin{theorem}\label{Th-subscaled+critical} 
 Assume $N\ge 3$ and \eqref{basicparametershypothesis}. Consider 
$$
f(x,t)=\mu\frac{|t|^{r-2}t}{|x|^\eta}+\frac{|t|^{2^*_{\eta_1}-2}t}{|x|^{\eta_1}},\quad a.e.\, x\in\R^N, \forall t\in\R,
$$
 where $(\eta,r)$ is subscaled and $0\le\eta_1<2$. Then there exists  $\mu^*>0$ such that problem \eqref{general problem} has a sequence of solutions $(u_k)$ at negative levels with  $\Phi(u_k)\nearrow 0$ for each $\mu\in(0,\mu^*)$.
\end{theorem}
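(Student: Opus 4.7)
The strategy is to obtain critical points of the even functional $\Phi$ by a symmetric minimax argument adapted from the classical concave--convex approach.  The subscaled term $-\tfrac{\mu}{r}\int_{\mathbb R^N}|u|^r/|x|^\eta\,\dd x$ plays the role of the ``concave'' perturbation, producing negative values of $\Phi$ near the origin through the CKN-type scaling $u_s(x)=s^\delta u(sx)$; the critical term $-\tfrac{1}{2^*_{\eta_1}}\int_{\mathbb R^N}|u|^{2^*_{\eta_1}}/|x|^{\eta_1}\,\dd x$ is responsible for the loss of compactness and is handled by restricting attention to small negative levels together with a smallness condition on $\mu$.

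First I would truncate the critical term so as to dispose of a functional bounded below.  Fix $\rho>0$ and a smooth cut-off $\chi:[0,\infty)\to[0,1]$ with $\chi\equiv 1$ on $[0,1]$ and $\chi\equiv 0$ on $[2,\infty)$, and set
\begin{equation*}
\widetilde\Phi_\mu(u)=\tfrac12\int_{\mathbb R^N}|\nabla u|^2\,\dd x+\tfrac1q\int_{\mathbb R^N}\frac{|u|^q}{|x|^b}\,\dd x-\tfrac{\mu}{r}\int_{\mathbb R^N}\frac{|u|^r}{|x|^\eta}\,\dd x-\tfrac{\chi(\|u\|/\rho)}{2^*_{\eta_1}}\int_{\mathbb R^N}\frac{|u|^{2^*_{\eta_1}}}{|x|^{\eta_1}}\,\dd x.
\end{equation*}
Combining the continuous embeddings $E^q_b\hookrightarrow L^r_\eta$ and $E^q_b\hookrightarrow L^{2^*_{\eta_1}}_{\eta_1}$ provided by Lemma~\ref{Lemma Embedding with weight} with the subscaled condition $\ell(\eta,r)<\ell$, the latter translated via Young's inequality into a bound of $\mu\|u\|_{L^r_\eta}^r$ by an arbitrarily small multiple of $\|\nabla u\|_{L^2}^2+\|u\|_{L^q_b}^q$ plus a constant, the truncated functional $\widetilde\Phi_\mu$ is coercive and bounded below on $E^q_b$, and it agrees with $\Phi$ on $\{\|u\|\le\rho\}$.

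Second, I would establish the symmetric linking geometry near the origin.  Given $k\in\mathbb N$, pick a $k$-dimensional subspace $V_k\subset C^\infty_c(\mathbb R^N\setminus\{0\})$; for $u\in V_k$ with $\|u\|=1$ and $s\in(0,s_0)$ the scaling identities yield
\begin{equation*}
\widetilde\Phi_\mu(u_s)=\tfrac{s^\ell}{2}\|\nabla u\|_{L^2}^2+\tfrac{s^\ell}{q}\|u\|_{L^q_b}^q-\tfrac{\mu\,s^{\ell(\eta,r)}}{r}\|u\|_{L^r_\eta}^r-\tfrac{s^{\ell(\eta_1,2^*_{\eta_1})}}{2^*_{\eta_1}}\|u\|_{L^{2^*_{\eta_1}}_{\eta_1}}^{2^*_{\eta_1}}.
\end{equation*}
The strict inequalities $\ell(\eta,r)<\ell<\ell(\eta_1,2^*_{\eta_1})$---the first by the subscaled assumption and the second automatic, since $q<2_b^*$ forces $\delta>(N-2)/2$ and hence $(2^*_{\eta_1}-2)\delta>2-\eta_1$---show that the subscaled term dominates for small $s$, so there exist $s_k>0$ and $\varepsilon_k>0$ such that $\widetilde\Phi_\mu\le -\varepsilon_k$ on the symmetric compact set $A_k=\{\pm u_{s_k}:u\in V_k,\,\|u\|=1\}$, which has Fadell--Rabinowitz cohomological index $k$.

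Finally, I would invoke a Clark--Kajikiya-type multiplicity theorem for even, bounded-below $C^1$ functionals: whenever a local Palais--Smale condition holds at all sufficiently small negative levels, and arbitrarily high-index symmetric compact sets of strictly negative supremum exist, the functional admits a sequence of critical points with levels $c_k\nearrow 0$.  The local (PS) follows from a concentration--compactness analysis in the weighted setting, which shows that (PS)-sequences of $\widetilde\Phi_\mu$ can only lose compactness through concentration of the critical term at the origin, costing at least a fixed amount $c^*>0$ determined by the best constant in the CKN embedding $E^q_b\hookrightarrow L^{2^*_{\eta_1}}_{\eta_1}$; hence (PS)$_c$ holds for every $c\in(-c^*,0)$.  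Choosing $\mu^*>0$ so small that the resulting minimax levels $c_k$ lie in $(-c^*,0)$, an a~priori estimate from $\widetilde\Phi_\mu(u_k)=c_k$ and $\widetilde\Phi_\mu'(u_k)=0$ confines these critical points to $\{\|u\|\le\rho\}$, where $\widetilde\Phi_\mu$ and $\Phi$ coincide, so they solve the original problem.  The main obstacle is the localized Palais--Smale analysis, since one must adapt the concentration--compactness argument to the weighted CKN framework and identify the threshold $c^*$ in terms of the appropriate weighted best constant rather than the classical Sobolev constant.
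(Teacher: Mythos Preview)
Your overall strategy---truncate to obtain a coercive even functional, apply a Clark-type symmetric minimax yielding negative critical levels $c_k\nearrow 0$, then argue that the resulting critical points lie where the truncation is inactive---is the one the paper uses (via Proposition~\ref{critical point result}). The implementation, however, has a gap in the last step, and the role you assign to $\mu^*$ in the $(PS)$ analysis is misplaced.

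The paper does not cut off only the critical term: it sets $\widetilde\Phi_\mu(u)=\xi_\mu(I(u))\,\Phi_\mu(u)$, with $\xi_\mu\equiv 1$ on $[0,R_1(\mu)]$ and $\xi_\mu\equiv 0$ on $[R_2(\mu),\infty)$, where $R_1<R_2$ are the positive zeros of $\gamma_\mu(t)=t-C\mu\,t^{\ell(\eta,r)/\ell}-C_1\,t^{\ell(\eta_1,2^*_{\eta_1})/\ell}$ (which exist precisely for small $\mu$). Since $\Phi_\mu(u)\ge\gamma_\mu(I(u))$, any $u$ with $\widetilde\Phi_\mu(u)<0$ is forced into $\{I(u)<R_1(\mu)\}$, where $\widetilde\Phi_\mu=\Phi_\mu$ and their derivatives agree. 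Your truncation lacks this property: on $\{\|u\|>2\rho\}$ your $\widetilde\Phi_\mu$ reduces to $I(u)-\tfrac{\mu}{r}\|u\|_{L^r_\eta}^r$, and that subscaled functional \emph{does} admit critical points at negative levels---infinitely many, by Theorem~\ref{Th-single-subscaled}. So critical points of your $\widetilde\Phi_\mu$ at negative levels can sit in the outer region and solve only the problem without the critical term; the claimed a~priori confinement to $\{\|u\|\le\rho\}$ is not justified. Separately, the $(PS)_c$ threshold is not $\mu$-independent here: because $(\eta,r)$ is subscaled with positive coefficient, the Pohozaev-based estimate (as in Lemma~\ref{lemmaPS-criticalpower+superscaledpower}) gives, for the weak limit $u$ of a non-compact $(PS)_c$ sequence, $c\ge \tfrac{2-\eta_1}{2(N-\eta_1)}S_{\eta_1}^{(N-\eta_1)/(2-\eta_1)}-\mu\,\tilde d\,\|u\|_{L^r_\eta}^r$. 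The paper first uses its truncation to bound $\|u\|_{L^r_\eta}$ uniformly in $\mu\in(0,\mu^*)$ (via $I(u)\le R_1(\mu^*)$) and \emph{then} chooses $\mu^*$ small so the right-hand side stays positive, contradicting $c<0$. Thus smallness of $\mu$ enters inside the $(PS)$ argument itself, not merely to place the minimax levels below a fixed threshold.
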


Our final theorem establishes a multiplicity result for a critical problem involving both scaled and subscaled nonlinearities.  As in Theorem~\ref{th cri-super-mult} and Corollary~\ref{corollary-th}, its statement requires referring to the sequence of eigenvalues associated with problem~\eqref{NE problem}, whose existence is ensured by Theorem~\ref{lambdak}.

\begin{theorem}\label{Th-subscaled+scaled+critical} 
 Assume $N\ge 3$ and \eqref{basicparametershypothesis}. Consider 
$$
f(x,t)=\lambda\frac{|t|^{p-2}t}{|x|^a}-\mu\frac{|t|^{r-2}t}{|x|^\eta}+\frac{|t|^{2^*_{\eta_1}-2}t}{|x|^{\eta_1}},\quad a.e.\, x\in\R^N, \forall t\in\R,
$$
 where \((\eta,r)\) is subscaled and $\eta_1$ satisfies \eqref{eta1 small}.  
Suppose that \(\lambda > \lambda_k\). Then there exists \(\mu^*>0\) such that, for every  
\(\mu \in (0,\mu^*)\), problem~\eqref{general problem} admits $k$ pairs of nontrivial solutions at positive energy levels.
\end{theorem}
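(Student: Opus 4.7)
The plan is to apply a symmetric min--max scheme based on the Fadell--Rabinowitz cohomological index---the same abstract machinery used in Theorem~\ref{lambdak} to construct the nonlinear eigenvalues $\lambda_k$---to the $C^1$ functional
\begin{equation*}
\Phi(u)=\tfrac{1}{2}\int_{\R^N}|\nabla u|^2\,dx+\tfrac{1}{q}\|u\|_{L^q_b}^q-\tfrac{\lambda}{p}\|u\|_{L^p_a}^p+\tfrac{\mu}{r}\|u\|_{L^r_\eta}^r-\tfrac{1}{2^*_{\eta_1}}\|u\|_{L^{2^*_{\eta_1}}_{\eta_1}}^{2^*_{\eta_1}}
\end{equation*}
on $E^q_b$, whose $C^1$ regularity follows from the subscaled character of $(\eta,r)$, the hypothesis \eqref{eta1 small} on $\eta_1$, and Lemma~\ref{Lemma Embedding with weight}. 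The overall strategy is to produce $k$ linking min--max levels and then confine them to the interval $(0,c^*)$, where $c^*$ is the compactness threshold determined by the best constant of the critical CKN embedding into $L^{2^*_{\eta_1}}_{\eta_1}(\R^N)$.

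First I would combine $\lambda>\lambda_k$ with the variational characterization from Theorem~\ref{lambdak} to exhibit two symmetric closed sets $\mathcal A,\mathcal B\subset E^q_b$ that link cohomologically: $\mathcal A$ carries cohomological index at least $k$ and supports a uniform upper bound on the scaled functional $\Phi_\lambda$, while $\mathcal B$ has dual index and is separated from the origin with $\Phi_\lambda\ge\beta>0$. Since the subscaled and critical contributions are controlled on $\mathcal A\cup\mathcal B$ for $\mu$ bounded (via the continuous embeddings of Section~\ref{secpreliminaries}), the linking persists for the full $\Phi$, and the standard index-preserving min--max family $\Gamma_i$ yields candidate critical values
\begin{equation*}
c_1\le c_2\le\cdots\le c_k,\qquad c_i=\inf_{F\in\Gamma_i}\sup_{u\in F}\Phi(u).
\end{equation*}
Next I would establish a localized $(PS)_c$ condition at every level $c\in(0,c^*)$: boundedness of $(PS)_c$ sequences follows from the identity $\Phi(u_n)-\tfrac{1}{2^*_{\eta_1}}\langle\Phi'(u_n),u_n\rangle$, absorbing the scaled $\lambda$-term through CKN interpolation, while a profile decomposition based on the compact embeddings of Section~\ref{secpreliminaries} and the standard concentration analysis of the weighted critical bubble rules out any nontrivial concentration strictly below $c^*$.

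The main obstacle is establishing the strict inequality $c_k<c^*$ for $\mu$ sufficiently small. I would test the $\sup$ defining $c_k$ on a family obtained by superimposing the finite-dimensional block associated to $\lambda_1,\dots,\lambda_k$ (provided by Theorem~\ref{lambdak}) with a concentrating Aubin--Talenti--type extremal $U_\varepsilon$ of the critical weighted Sobolev quotient. The energy of $U_\varepsilon$ by itself converges to $c^*$ as $\varepsilon\to 0$; the nonlinear interaction with the $\lambda$-term produces a strictly negative correction of leading order in $\varepsilon$---this is the Brezis--Nirenberg expansion transplanted to the CKN setting, and it uses $\lambda>\lambda_k$ in an essential way to keep the scaled part of the energy nonpositive on the finite-dimensional block. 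The positive perturbation $\tfrac{\mu}{r}\|U_\varepsilon\|_{L^r_\eta}^r$ can then be absorbed by choosing $\mu$ smaller than an explicit $\mu^*>0$, preserving the gap and giving $c_k<c^*$. The abstract multiplicity principle linked to the cohomological index then delivers $k$ critical values in $(0,c^*)$, and the corresponding $k$ pairs $\pm u_i$ of nontrivial solutions at positive energy; coincident levels $c_i=c_{i+1}$ produce a critical set of cohomological index at least two, still yielding the required pair count.
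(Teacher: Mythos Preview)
Your overall architecture (symmetric linking via the cohomological index, combined with $(PS)_c$ below a critical threshold $c^*$) matches the paper's. But the decisive step---showing $\sup_X\Phi<c^*$ for small $\mu$---is handled in the paper in a completely different and much simpler way than the Brezis--Nirenberg/Aubin--Talenti route you outline, and your route has a genuine gap.

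The gap is the ``finite-dimensional block associated to $\lambda_1,\dots,\lambda_k$''. Theorem~\ref{lambdak} concerns a \emph{nonlinear} eigenvalue problem: the eigenfunctions do not span a subspace, and what the theory provides is sublevel sets $\tilde\Psi^{\lambda}\subset\mathcal M$ of prescribed cohomological index, not a $k$-dimensional space. There is therefore nothing to ``superimpose'' $U_\varepsilon$ onto, and no linear structure that would turn a single-bubble estimate into an index-$k$ test family. Even the single-bubble Brezis--Nirenberg expansion you invoke is not justified here: the competing lower-order term is $\|U_\varepsilon\|_{L^p_a}^p$ with $p>2$ and a singular weight, and whether it dominates the truncation errors depends on relations among $N,p,q,a,b,\eta_1$ that you do not address.

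The paper bypasses all of this by exploiting the scaling $u\mapsto u_t$ directly. Since $\lambda>\lambda_k$, one may fix $\tilde\lambda\in(\lambda_k,\lambda)$ and, via Theorem~\ref{lambdak}\ref{lambdak(iii)} and a Degiovanni--Lancelotti approximation, choose a \emph{compact} symmetric $A_0\subset\mathcal M\setminus\tilde\Psi_{\tilde\lambda}$ with $i(A_0)\ge k$. On $A_0$ one has $\tilde\Psi(u)<\tilde\lambda<\lambda$, hence $1-\lambda/\tilde\Psi(u)<0$, and the compactness of $A_0$ gives a uniform lower bound $c_3>0$ on $\|u\|_{L^{2^*_{\eta_1}}_{\eta_1}}^{2^*_{\eta_1}}$. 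Writing $\Phi(u_t)$ via the scaling law one obtains, for every $u\in A_0$ and $t\ge0$,
\[
\Phi(u_t)\;\le\; c'\mu\, t^{\ell(\eta,r)}\;-\;c_3\, t^{\ell(\eta_1,2^*_{\eta_1})},
\]
whose maximum in $t$ is an explicit power of $\mu$ tending to $0$ as $\mu\to0^+$; this yields $\sup_X\Phi<c^*$ for all $\mu<\mu^*$ without any bubble analysis. The linking then comes from Corollary~\ref{Corollary 2.34 MP}, with $B=\mathcal M_\rho$ for small $\rho$ (where $\inf_{\mathcal M_\rho}\Phi>0$) and $A=\{u_R:u\in A_0\}$ for large $R$ (where $\sup_A\Phi\le0$). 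The $(PS)_c$ condition below $c^*$ is obtained from Lemma~\ref{PS-superscaled} and Remark~\ref{RemarkPSsubscaled+critical}, again using the scaling and the Pohozaev identity rather than the direct $\Phi-\tfrac{1}{2^*_{\eta_1}}\Phi'$ combination you propose (the latter does not immediately absorb the $\lambda$-term, since $(a,p)$ is \emph{scaled} and $\|u\|_{L^p_a}^p$ is comparable to $I(u)$).
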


\subsection*{Organization of the paper}

The structure of the remainder of the paper is as follows: In Section~\ref{secpreliminaries} we give some preliminary results such as the embedding theorems that justify Definition \ref{defscaled}, and a Pohozaev Identity. Section~\ref{section SOp} is devoted to the abstract framework of Mercuri and Perera \cite{MePe2025}. We verify that our framework satisfy the structural hypotheses of the theory, and we construct the nonlinear eigenvalues $(\lambda_k)$ of problem~\eqref{NE problem} using the Fadell--Rabinowitz cohomological index. Section~\ref{sectionproofs} contains all proofs of the main results. We first establish general Palais–Smale and boundedness criteria. These preliminary tools are then used to treat, in a unified manner, some classes of nonlinearities considered in the paper: superscaled perturbations in the subcritical and critical cases; combinations of scaled, subscaled, and superscaled terms; and problems involving the nonlinear eigenvalues $(\lambda_k)$. Section~\ref{sectionradial} is devoted to the radial setting. 
Using the improved radial Caffarelli--Kohn--Nirenberg inequalities, we obtain strictly larger embedding ranges for the radial space $E^{q}_{b,\mathrm{rad}}$ by extending the admissible regimes for $(\eta,r)$. When $N\ge3$, the range of admissible exponents can be enlarged in the case $0\le \eta<b$. In dimension $N=2$, where the nonradial theory does not allow $\eta<b$, the radial embeddings yield a new interval of admissible exponents for $0\le \eta<b$. 
Moreover, in any dimension we may treat the case $\eta=b=0$, obtaining  subcritical compact embeddings that are not available in the nonradial setting. We then show how all main existence and multiplicity results admit strengthened radial versions under these improved embedding intervals. Finally, Section~\ref{sectionappendix} contains an appendix where we prove the density 
of smooth functions in the underlying functional space used throughout this paper.

\section{Preliminaries}\label{secpreliminaries}

In this section we establish several basic ingredients needed for our analysis.  
We begin with the embedding results that enable the use of variational methods in the proof of our main theorems. We then derive a technical regularity lemma and a Pohozaev identity,  which also play 
a crucial role in obtaining our results.

\subsection{Embeddings in weighted Lebesgue spaces}

Consider $b\in[0,N)$. Due to the Caffarelli - Kohn - Nirenberg inequality (see \cite{CKN-1984}), for fixed numbers
\begin{equation}\label{parameters}
r>0,\quad\theta\in[0,1],\quad q\geq1,\quad \sigma\in\mathbb{R},\quad\gamma=\theta\sigma-(1-\theta)\frac{b}{q}
\end{equation}
there exists $C>0$ such that
\begin{equation}\label{CKN ineq}
\||x|^\gamma u\|_r\leq C \|\nabla u\|^\theta_2 \| |x|^\frac{-b}{q} u \|_q^{1-\theta},\quad\forall u\in C^\infty_c(\R^N)
\end{equation}
if, and only if, the following occur
\begin{align}\label{dimensional balance}
0<\frac{1}{r}+\frac{\gamma}{N}= \theta\frac{N-2}{2N}+(1-\theta)\frac{N-b}{qN},
\end{align}
\begin{equation}\label{sigma's condition}
\sigma\leq0\,\,\,\text{if}\,\,\,\theta>0,\quad\text{and}\quad \sigma\geq -1\,\,\,\text{if}\,\,\,\theta>0\,\,\, \text{and}\,\,\, \frac{1}{r}+\frac{\gamma}{N}= \frac{N-2}{2N}.
\end{equation}
Moreover, $C$ is bounded if $\sigma\in[-1,0]$ and the other parameters are in a compact set.

 We shall also make use of the following proposition, due to Mallick and Nguyen~\cite{Mallick-Nguyen-2023}, which is stated below for convenience with the particular choice of parameters relevant to the present work.

 \begin{proposition}\cite[Proposition 24]{Mallick-Nguyen-2023}
\label{propMallickNguyen} Let $N \ge 1$, $q \ge 1$, $r \ge 1$, $0 < \theta < 1$, $0\le b<N$ and $\gamma \in \mathbb{R}$.
Define $\sigma$ by \eqref{parameters} and assume that $\sigma<0$, \eqref{dimensional balance} holds and
\[
   \frac{N-2}{2N} \ne \frac{N-b}{qN}.
\]
Suppose that the embedding
$W^{1,2}(B_1) \cap L^{q}(B_1)$ into $L^{r}(B_1)$ is compact.
Let $(u_n)_n \subset L^1_{\mathrm{loc}}(\mathbb{R}^N)$
 be a sequence of compact support functions such that 
$\bigl( \|\nabla u_n \|_2 \bigr)$
and $\bigl( \|\,|x|^{-\frac bq} u_n \|_q\bigr)$
are bounded.
Then $\bigl(|x|^{\gamma} u_n\bigr)$ has a convergent subsequence in $L^r(\mathbb R^N)$.
\end{proposition}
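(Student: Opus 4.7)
The plan is to combine local compactness coming from the hypothesized embedding with tail control derived from the Caffarelli--Kohn--Nirenberg inequality \eqref{CKN ineq}, the key being that \(\sigma<0\) strictly. The scheme has four steps: (i) deduce boundedness of \((|x|^\gamma u_n)\) in \(L^r(\R^N)\) from \eqref{CKN ineq}; (ii) establish strong convergence of a subsequence on every annulus \(\{\varepsilon<|x|<R\}\); (iii) prove uniform smallness of the tails near \(0\) and \(\infty\); (iv) conclude by diagonalization.

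First I would verify local \(L^r\)-compactness. Fix \(0<\varepsilon<R\) and cover \(A_{\varepsilon,R}:=\{\varepsilon<|x|<R\}\) by finitely many balls on which, after rescaling to the unit ball, the hypothesis that \(W^{1,2}(B_1)\cap L^q(B_1)\) embeds compactly into \(L^r(B_1)\) applies. Since \((\|\nabla u_n\|_2)\) and \((\||x|^{-b/q}u_n\|_q)\) are bounded and on each such ball \(|x|^{-b/q}\) is comparable to a positive constant, a diagonal subsequence \((u_{n_k})\) converges strongly in \(L^r(A_{\varepsilon,R})\). Because \(|x|^{\gamma r}\) is bounded above and below by positive constants on \(A_{\varepsilon,R}\), strong convergence of \((|x|^\gamma u_{n_k})\) in \(L^r(A_{\varepsilon,R})\) follows immediately.

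The hard part is the uniform tail control. One must show that for every \(\delta>0\) there exist \(0<\varepsilon<R\), independent of \(n\), with
\[
\int_{|x|<\varepsilon}|x|^{\gamma r}|u_n|^r\,dx+\int_{|x|>R}|x|^{\gamma r}|u_n|^r\,dx<\delta.
\]
To handle this I would decompose \(\R^N\) into dyadic annuli \(A_k:=\{2^k\le|x|<2^{k+1}\}\) for \(k\in\mathbb{Z}\) and apply \eqref{CKN ineq} to \(u_n\chi_k\), where \(\chi_k\) is a smooth cutoff supported near \(A_k\), after rescaling \(x\mapsto 2^{-k}x\). Although \eqref{CKN ineq} by itself is scale invariant thanks to \eqref{dimensional balance}, the assumption \(\sigma<0\) allows one to combine it with a companion CKN inequality corresponding to a slightly perturbed admissible triple \((\theta',\sigma',\gamma')\); the nondegeneracy \((N-2)/(2N)\neq(N-b)/(qN)\) guarantees that such a perturbation produces genuinely independent information. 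Hölder interpolation between the two bounds on each annulus then produces a geometric factor \(2^{ck}\) with \(c\neq 0\), so the series \(\sum_k\int_{A_k}|x|^{\gamma r}|u_n|^r\,dx\) converges geometrically and its tails are uniformly small in \(n\).

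With the tail estimates in place, a standard diagonal extraction combining local convergence on \(A_{1/k,k}\) for each \(k\in\mathbb{N}\) with uniform smallness of the tails produces a subsequence of \((|x|^\gamma u_n)\) converging strongly in \(L^r(\R^N)\). The chief obstacle throughout is precisely the tail analysis: without the strict inequality \(\sigma<0\), the natural dilation leaves \eqref{CKN ineq} scale invariant and no summable geometric factor can be extracted, so the tails cannot be made uniformly small.
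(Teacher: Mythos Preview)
The paper does not prove this proposition; it is quoted from \cite[Proposition~24]{Mallick-Nguyen-2023} and used as a black box in the proof of Lemma~\ref{Lemma Embedding with weight}. So there is no proof in the paper to compare your attempt against.

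That said, your outline follows the natural strategy (local compactness on annuli plus uniform tail smallness extracted from the strict inequality \(\sigma<0\)), and steps (i), (ii), (iv) are fine. The weak point is step~(iii). Applying \eqref{CKN ineq} to \(u_n\chi_k\) on a single dyadic shell forces you to control \(\|\nabla(u_n\chi_k)\|_2\), hence the commutator \(\|u_n\nabla\chi_k\|_2\); but from the data you only have \(\|\nabla u_n\|_2\) and \(\||x|^{-b/q}u_n\|_q\), and there is no direct bound on \(\|u_n\|_{L^2(A_k)}\) that behaves well under the dyadic rescaling. Your sentence about ``a companion CKN inequality corresponding to a slightly perturbed admissible triple'' is the right idea but is doing all the work without being made precise. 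The cleaner way to execute it is to apply two global CKN inequalities to \(u_n\) itself (no shell cutoffs), with nearby admissible parameters \((r_\pm,\theta_\pm,\gamma_\pm)\) on either side of \((r,\theta,\gamma)\)---this is exactly where \(\sigma<0\) and the nondegeneracy \(\tfrac{N-2}{2N}\neq\tfrac{N-b}{qN}\) are used, since they give room to move \(\theta\) while keeping \(\sigma\le0\). Restricting the two resulting global bounds to \(A_k\) and interpolating then yields the geometric factor you want, with one perturbation controlling \(k\to+\infty\) and the other \(k\to-\infty\). With that modification your argument goes through.
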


The next result combines the classical CKN inequality with some new insights in \cite{Mallick-Nguyen-2023} and brings some embedding results for the space $E^q_b$. Recal the definition of $2^*_{b,q,\eta}$ given in \eqref{def2bqeta}.

\begin{lemma}\label{Lemma Embedding with weight}
Suppose $N\ge 2$, let $0\le b<2<q<2^*_b$ and $0\le\eta<N$. For $N=2$, assume, additionally, that $b\leq\eta$. 
Fix $r>0$ such that
   $$
r\in \left(2^*_{b,q,\eta},2^*_\eta\right)
    $$
   and $\theta\in (0,1)$ such that 
   \begin{equation}\label{balance -eta/r}
   \frac{1}{r}=\theta\frac{N-2}{2(N-\eta)}+(1-\theta)\frac{N-b}{q(N-\eta)}.
   \end{equation}
In addition, if we assume $\eta\le b$ we can consider  $r=2^*_{b,q,\eta}$ and, if $\eta\le 2$ and $N\geq3$, the other endpoint $r=2^*_\eta$. %(i.e. $\theta=1$)
Then, it holds
    \begin{equation}\label{embedding nonradial}
\left(\int_{\R^N}\frac{|u|^r}{|x|^\eta} dx\right)^\frac{1}{r}\leq C \left(\int_{\R^N}|\nabla u|^2dx\right)^\frac{\theta}{2} 
\left( \int_{\R^N}\frac{|u|^q}{|x|^b}dx\right)^\frac{1-\theta}{q},\quad\forall u\in E^q_b(\R^N).
\end{equation}
In particular, we have \(E^q_b \hookrightarrow L_\eta^r(\mathbb{R}^N)\) for every 
\(r \ge 1\) in the above interval. 
Moreover, for  \(N \ge 3\) with \(\eta > 0\), or  \(N = 2\) with \(\eta > b\), these embeddings are 
compact except at the endpoints of the interval.
\end{lemma}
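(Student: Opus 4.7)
The plan is to obtain \eqref{embedding nonradial} by specializing the classical Caffarelli--Kohn--Nirenberg inequality \eqref{CKN ineq} with $\gamma = -\eta/r$ (so that its left-hand side becomes $\|u\|_{L^r_\eta}$), and then to derive compactness by combining the Mallick--Nguyen Proposition~\ref{propMallickNguyen} with an annular cutoff to reduce to compactly supported sequences.

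\textbf{Continuous embedding.} With $\gamma = -\eta/r$, the dimensional balance \eqref{dimensional balance} reduces exactly to \eqref{balance -eta/r} and uniquely determines $\theta$ in terms of $r$; the auxiliary parameter $\sigma$ is then forced by $\gamma = \theta\sigma - (1-\theta)b/q$. The task of verifying \eqref{sigma's condition} is elementary: $\sigma\le 0$ (needed when $\theta>0$) translates into $r$ lying in the stated interval between $2^*_{b,q,\eta}$ and $2^*_\eta$, and the additional constraint $\sigma\ge -1$ active at the Sobolev endpoint $r = 2^*_\eta$ (i.e., $\theta = 1$) reduces to $\eta\le 2$. The inclusion of the lower endpoint $r = 2^*_{b,q,\eta}$ under $\eta\le b$ corresponds either to the trivial case $\eta = b$ (which gives $r = q$, $\theta = 0$), or, for $\eta<b$ and $N\ge 3$, to the boundary value $\sigma = 0$ with $\theta = 1 - q\eta/(rb)\in(0,1)$, which is still permitted by \eqref{sigma's condition}. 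Once \eqref{embedding nonradial} holds on $C_c^\infty(\mathbb{R}^N)$, density (Proposition~\ref{Density}) propagates it to all of $E_b^q$.

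\textbf{Compactness in the open interior.} Given a bounded $(u_n)\subset E_b^q$, pass to a weakly convergent subsequence with limit $u$ and write $w_n = u_n - u$; it suffices to show $w_n\to 0$ in $L^r_\eta$. For each $R>1$ choose $\varphi_R\in C_c^\infty(\mathbb{R}^N\setminus\{0\})$ identically $1$ on $\{1/R\le|x|\le R\}$ and supported in $\{1/(2R)\le|x|\le 2R\}$, and decompose $w_n = \varphi_R w_n + (1-\varphi_R)w_n$. In the open interior of the admissible range one has $\sigma<0$ strict, the nondegeneracy condition $(N-2)/(2N)\ne(N-b)/(qN)$ reduces to $q\ne 2_b^*$ and so is satisfied, and the strict subcriticality $r<2^*_\eta\le 2^*$ (or $r<\infty$ when $N=2$) furnishes the Rellich--Kondrachov compactness $W^{1,2}(B_1)\cap L^q(B_1)\hookrightarrow L^r(B_1)$ required by Proposition~\ref{propMallickNguyen}. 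That proposition then yields, along a subsequence, $\varphi_R w_n\to 0$ in $L^r_\eta$ for each fixed $R$.

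\textbf{Main obstacle.} The hardest step will be the uniform-in-$n$ tail estimate on $(1-\varphi_R)w_n$ as $R\to\infty$. The key observation is that the admissible range is \emph{open} in the weight $\eta$: for small $\tau>0$, both $(\eta-\tau,r)$ and $(\eta+\tau,r)$ are still admissible, so the continuous embedding provides uniform bounds $\|w_n\|_{L^r_{\eta\pm\tau}}\le C$. On $\{|x|>R\}$ one uses $|x|^{-\eta}\le R^{-\tau}|x|^{-(\eta-\tau)}$ to obtain an $O(R^{-\tau})$ bound for $\int_{|x|>R}|w_n|^r/|x|^\eta\,dx$, and on $\{|x|<1/R\}$ the symmetric bound with $(\eta+\tau)$ applies. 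A diagonal extraction over $R$, combined with the Mallick--Nguyen step, then produces a subsequence with $w_n\to 0$ in $L^r_\eta$. This openness mechanism breaks down at the endpoints, consistent with the loss of compactness there.
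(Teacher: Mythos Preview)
Your argument is correct. The continuous-embedding part matches the paper's: set $\gamma=-\eta/r$, read \eqref{dimensional balance} as \eqref{balance -eta/r}, and verify \eqref{sigma's condition} case by case. Your identification of the endpoint constraints ($\sigma=0$ at $r=2^*_{b,q,\eta}$ when $\eta<b$, and $\sigma\ge -1$ at $r=2^*_\eta$ reducing to $\eta\le 2$) agrees with the paper's computation.

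For compactness, however, you take a longer route than the paper. The paper observes that it suffices to test bounded sequences $(u_n)\subset C_c^\infty(\mathbb R^N)$ (by density, replace an arbitrary bounded $(u_n)$ by $v_n\in C_c^\infty$ with $\|u_n-v_n\|<1/n$; convergence of a subsequence of $(v_n)$ then transfers to $(u_n)$ via the continuous embedding). Since Proposition~\ref{propMallickNguyen} only asks that each $u_n$ have compact support---not uniformly bounded support---it applies directly to $(u_n)$ itself and yields a convergent subsequence in $L^r_\eta$ in one step. No cutoffs, no tail estimate, no diagonal extraction. Your annular cutoff plus the openness-in-$\eta$ trick for the tails is a valid and self-contained alternative (and in fact your annular part could even be handled by ordinary Rellich--Kondrachov, since the weight is bounded above and below on any fixed annulus), but it duplicates work already absorbed into Mallick--Nguyen's result. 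A minor point: your claim that $\varphi_R w_n\to 0$ (rather than merely converging) requires identifying the limit, e.g.\ via $w_n\rightharpoonup 0$ plus local Rellich giving a.e.\ convergence to $0$; alternatively, you could skip the weak limit entirely and just show the extracted subsequence is Cauchy in $L^r_\eta$.
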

\begin{proof}
%Let \(N \ge 2\) and consider parameters \(0 < b < \eta < \frac{N+2}{2}\) and \(q > 2\).
Fix $r\in[2^*_{b,q,\eta},2^*_\eta]$
and choose \(\theta \in [0,1]\) so that \eqref{balance -eta/r} holds for this value of \(r\).
Consider $\gamma=-\eta/r$  so that \eqref{dimensional balance} can be written as \eqref{balance -eta/r}. 
Let $\sigma\in \R$ be given such that $\gamma=\theta \sigma-(1-\theta)b/q$, as in \eqref{parameters}. 
Then, if $\theta=0$  we see that this expression of $\gamma$ and \eqref{balance -eta/r} imply $\eta=b$ and $r=q$.
%\[
%\frac{\eta}{r}=-\gamma=\frac{b}{q}\quad\text{and}\quad \frac{1}{r}=\frac{N-b}{q(N-\eta)}\qquad \Longrightarrow\qquad \eta=b\,\,\,\text{and}\,\,\,r=q.
%\]
If $\theta>0$, 
due to \eqref{balance -eta/r} we get
\[
\sigma=\frac{-\eta}{\theta r}+\frac{(1-\theta)b}{\theta q}=-\frac{\eta(N-2)}{2(N-\eta)}-\frac{(1-\theta)N(\eta-b)}{\theta q(N-\eta)}.
\]
We observe that for $N=2$, $\theta\in(0,1)$ and $0\le \eta< N$ we get $sgn(\sigma)=sgn(b-\eta)$. This means that \eqref{sigma's condition} only holds if $b\le \eta $. 
Notice that for $\theta\in(0,1)$, we have $\sigma< 0$  if $b<\eta$ and $\sigma=0$ if $b=\eta$.  
      Then, for $\eta\geq b$, the CKN inequality \eqref{CKN ineq} is valid  with
    $$
   \frac{1}{r}=(1-\theta)\frac{N-b}{q(N-\eta)},
    $$
where $\theta\in(0,1)$ if $\eta>b$ or $\theta\in[0,1)$ if $\eta=b$. % This is consistent with $r\in (2^*_{b,q,\eta},\infty)$ if $\eta>b$ and $r\in [2^*_{b,q,\eta},\infty)$ if $\eta=b$, where $2^*_{b,q,b}=q$.
On the other hand, for $N\geq3$ we get
$\sigma\le 0$ for any $\theta\in(0,1]$ if $b\leq\eta$, while for $0\le\eta<b$ it holds $\sigma\le 0$ if, and only if, $\theta$ satisfies
$$
%\sigma\le 0\quad\Longleftrightarrow\quad 
\theta\ge \theta_1:=\frac{2N(b-\eta)}{\eta q(N-2)+2N(b-\eta)}\in(0,1].
$$
Notice that, in this case $0\le\eta<b$, $\theta\ge \theta_1$ is equivalent to  $r\ge 2^*_{b,q,\eta}$.
Moreover, when  $\theta=1$ and  $\eta\leq2$ we get $\sigma\geq-1$.
So,  $\sigma$ satisfies \eqref{sigma's condition} and then \eqref{CKN ineq} occurs for $r$  between
    $2^*_{b,q,\eta}$ and $2^*_{\eta}$, including $2^*_{\eta}$ only in case $\eta\leq 2$, and including ${2^*_{b,q,\eta}}$ only in case $\eta\leq b$.
      Under these conditions, we conclude that \eqref{embedding nonradial} follows by density. 

It remains to prove that these embeddings are compact (except at the endpoints).
Let \((u_n)\) be a bounded sequence of functions in \(E^q_b\) with \(u_n \in C_c^\infty(\mathbb{R}^N)\).
This means that the sequences \((\|\nabla u_n\|_2)\) and \(\bigl(\|\,|x|^{-b/q} u_n \|_q\bigr)\) are bounded.
Then, if  $N\ge 3$ and $\eta>0$, or $N=2$ with $\eta>b$, fixing $r\geq1$ in $(2^*_{b,q,\eta},2^*_\eta)$ and setting \(\gamma = -\eta/r\), all the chosen parameters satisfy the assumptions of Proposition~\ref{propMallickNguyen}. Notice that the space \(W^{1,2}(B_1) \cap L^q(B_1)\) is compactly embedded into \(L^k(B_1)\) for every \(1 \le k < 2^*\) (or $k\ge 1$ if $N=2$),
and \(r\) lies within this range.
Hence, \(\bigl(|x|^{-\eta/r} u_n\bigr)\) admits a convergent subsequence in \(L^r(\mathbb{R}^N)\), which completes the proof.
\end{proof}

It is worth noting that the upper endpoint $r = 2^*_{\eta}$ of the embedding ranges obtained in Lemma~\ref{Lemma Embedding with weight} cannot be
improved. Indeed, the standard concentration argument applies: fix a nontrivial radial function supported in an annulus and consider the rescalings $u_\lambda(x)=\lambda^{(N-2)/2}u(\lambda x)$. This family is bounded in $E_b^q$, but no subsequence can converge in
$L^{2^*_\eta}_\eta(\mathbb{R}^N)$. Thus compactness fails precisely at the critical exponent $2^*_\eta$, as in the classical Sobolev setting. In turn, this failure of compactness shows that $2^*_\eta$ is also the \emph{optimal} endpoint for the continuous embedding, as expected.
%: if $E_b^q$ were continuously embedded into $L^s_\eta(\mathbb{R}^N)$ for some $s>2^*_\eta$, then interpolating between some $2^*_{b,q,\eta}<r<2^*_\eta$ and $s$ would yield a compact embedding into $L^{2^*_\eta}_\eta(\mathbb{R}^N)$, contradicting the argument above.

In contrast, the lower threshold exponent $2^*_{b,q,\eta}$, which marks the beginning of the admissible regime, is not known to be optimal.
The scaling construction used above does not give non-compactness at this value, and whether continuity or compactness extends up to this lower
endpoint appears to remain an open question.

The corollary below shows the compactness of the embeddings of the underlying functional spaces associated with the nonlinear eingenvalue problem \eqref{NE problem}. This property is crucial in the development of the eigenvalue theory based on the $\mathbb Z_2$-cohomological index of Fadell and Rabinowitz \cite{fadel-rabi}.

\begin{corollary}\label{corollary_emb}
Assume \eqref{basicparametershypothesis}. %, $r=p\in(2,q)$ and  $\eta=a\in(b,2)$, 
 Then we have $E^q_b(\R^N)\subset L^p_a(\R^N)$ with compact embedding, for $a$ as defined in \eqref{a def}.
\end{corollary}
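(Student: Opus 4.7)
The plan is to apply Lemma \ref{Lemma Embedding with weight} directly with $\eta = a$ and $r = p$, so the whole corollary reduces to checking that these choices land strictly inside the admissible range identified in that lemma. First I would verify the easy structural hypotheses: since $a \in (b,2) \subset (0, N)$ by \eqref{a def} and $N \ge 2$, the condition $0 \le a < N$ holds; moreover $a > b$, so the extra requirement $b \le \eta$ in the $N=2$ case of Lemma \ref{Lemma Embedding with weight} is automatic, and in fact the strict inequality $a > b$ will place us in the regime where the lemma also gives compactness when $N=2$.

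The core of the proof is the algebraic verification that $2^*_{b,q,a} < p < 2^*_a$ strictly. Since $a > b$, the first branch of \eqref{def2bqeta} gives $2^*_{b,q,a} = q(N-a)/(N-b)$, so the lower bound is equivalent to $p(N-b) > q(N-a)$. Substituting $a = b + \delta(q-p)$ and simplifying collects everything into $(q-p)\bigl(b + q\delta - N\bigr) > 0$. Here I would use the identity $b + q\delta = 2(q-b)/(q-2)$, which yields $b + q\delta - N = \ell$ (writing $\ell = \ell(b,q) = q\delta + b - N$ as in \eqref{l(eta,r)}). The hypothesis $q < 2_b^*$ is exactly $\ell > 0$, so together with $q > p$ we obtain $p > 2^*_{b,q,a}$.

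For the upper bound $p < 2^*_a = 2(N-a)/(N-2)$ (trivial for $N=2$), the same substitution of $a = b + \delta(q-p)$ rearranges the inequality into $p(N-2-2\delta) < 2(N-b) - 2\delta q$; a short computation identifies both coefficients with $-\ell$ (i.e.\ $N-2-2\delta = -\ell$ and $(N-b) - \delta q = -\ell$), so the inequality collapses to $(p-2)\ell > 0$, which holds since $p > 2$ and $\ell > 0$. With $p$ strictly interior to $(2^*_{b,q,a}, 2^*_a)$ and $a > b$ valid in every dimension $N \ge 2$, Lemma \ref{Lemma Embedding with weight} delivers both the continuous and the compact embedding $E^q_b \hookrightarrow L^p_a(\R^N)$. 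The only real obstacle is the bookkeeping in the two endpoint checks; the clean way through is to express both reductions in terms of the single scaling invariant $\ell$, so the admissibility range becomes a transparent consequence of $\ell > 0$ combined with $2 < p < q$.
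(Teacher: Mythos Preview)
Your proof is correct and follows the same route as the paper: both verify that $2^*_{b,q,a} < p < 2^*_a$ strictly and then invoke Lemma~\ref{Lemma Embedding with weight}. The paper's algebra is organized around the quantity $(N-2)(q-2)-2(2-b)$, which equals $-(q-2)\ell$, so your choice to express everything through $\ell$ is just an equivalent (and arguably more transparent) bookkeeping of the same computation.
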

\begin{proof} 
For $N\geq2$ we verify that $2^*_{b,q,a}<p$ if, and only if, 
\[
%2^*_{b,q,a}<p\quad\Longleftrightarrow \quad q[(N-2)(q-2)-2(2-b)]< p[(N-2)(q-2)-2(2-b)]
q[q(N-2)-2(N-b)]< p[q(N-2)-2(N-b)].
\]
Moreover, for $N\geq3$ we get $p< 2^*_{a}$ if, and only if, 
\[
%p< 2^*_{a}\quad\Longleftrightarrow \quad p[(N-2)(q-2)-2(2-b)]< 2[(N-2)(q-2)-2(2-b)].
p[q(N-2)-2(N-b)]< 2[q(N-2)-2(N-b)].
\]
Thus, since $p\in(2,q)$ and $q<2^*_b$, we obtain  \(2^*_{b,q,a}<p<2^*_a .\) The result then follows from Lemma \ref{Lemma Embedding with weight}.
\end{proof}

Another useful compact embedding, which will be used throughout the paper, is given in the next corollary. Its proof follows directly from the definition  of an admissible subscaled pair $(\eta,r)$ (Definition~\ref{defscaled}) together with Lemma~\ref{Lemma Embedding with weight}.

\begin{corollary}\label{subscaled compact}
Assume \eqref{basicparametershypothesis}. If $(\eta,r)$ is a subscaled pair, then the embedding 
\(
E^q_b(\mathbb{R}^N)\hookrightarrow L^r_\eta(\mathbb{R}^N)
\)
is compact.
\end{corollary}

\subsection{Pohozaev Identity and nonexistence results}

In this section we establish a Pohozaev identity that plays a crucial role in our results.  
The proof follows the ideas of \cite{Terracini}.  
This identity also yields conditions ensuring nonexistence of solutions to 
problem~\eqref{general problem}.

\begin{proposition}\label{Prop Pohozaev Identity}
Suppose $N\ge 2$ and let $0\le b<2<q<2^*_b$.
Let $u\in E^q_b$ be a weak solution for \eqref{general problem} where $f:\R^N\times\R\to\R$ is a Carathéodory function given by
\[
f(x,t)=\sum_{j=1}^mc_j\frac{|t|^{r_j-2}t}{|x|^{\eta_j}}
\]
for some $\eta_j\in[0,2)$ and $1< r_j\leq 2^*_{\eta_j}$ ($r_j\in(1,\infty)$ if $N=2$). 
Then $u\in C^2(\R^N\backslash\{0\})$ and, if $f(\cdot,u)u\in L^1(\mathbb{R}^N)$, it  satisfies the Pohozaev type identity
\begin{equation}\label{Pohozaev identity}
\frac{N-2}{2}\int_{\R^N}|\nabla u|^2dx+\frac{N-b}{q}\int_{\mathbb{R}^N} \frac{|u|^q}{|x|^b} dx
=\sum_{j=1}^m\frac{c_j(N-\eta_j)}{r_j}\int_{\mathbb{R}^N} \frac{|u|^{r_j}}{|x|^{\eta_j}} dx.
\end{equation}
\end{proposition}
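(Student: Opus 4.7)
The plan divides into three natural steps: interior regularity of $u$ away from the singular point $0$, the classical Pohozaev multiplier computation performed against cutoffs that localize between $0$ and infinity, and a limiting argument to remove the cutoffs.

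For the regularity step, on any compact subset of $\R^N\setminus\{0\}$ the weights $|x|^{-b}$ and $|x|^{-\eta_j}$ are smooth and bounded. Since $u\in E_b^q$, the embedding Lemma~\ref{Lemma Embedding with weight} gives $u\in L^{r_j}_{\eta_j}(\R^N)$ (using $\eta_j\in[0,2)$ and $r_j\leq 2^*_{\eta_j}$), so $f(\cdot,u)-|u|^{q-2}u/|x|^b\in L^{s}_{\mathrm{loc}}(\R^N\setminus\{0\})$ for suitable $s$. Iterating Calder\'on--Zygmund estimates on nested annuli and closing with Schauder theory yields $u\in C^2(\R^N\setminus\{0\})$.

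For the Pohozaev computation, I fix $\phi\in C_c^\infty(\R^N)$ with $\phi\equiv 1$ on $B_1$ and $\phi\equiv 0$ outside $B_2$, and for $0<\epsilon<R/2$ set $\zeta_{R,\epsilon}(x)=\phi(x/R)\bigl(1-\phi(x/\epsilon)\bigr)$. The function $v_{R,\epsilon}=\zeta_{R,\epsilon}(x\cdot\nabla u)$ is $C^1$ with compact support in $\R^N\setminus\{0\}$ by Step~1, hence an admissible test function. Inserting $v_{R,\epsilon}$ into the weak formulation and using the algebraic identities
\[
\nabla u\cdot\nabla(x\cdot\nabla u)=|\nabla u|^2+\tfrac{1}{2}\,x\cdot\nabla|\nabla u|^2,\qquad |w|^{r-2}w\,(x\cdot\nabla w)=\tfrac{1}{r}\,x\cdot\nabla|w|^r,
\]
together with $\operatorname{div}(x/|x|^\eta)=(N-\eta)/|x|^\eta$ for $x\neq 0$, integration by parts produces
\[
\tfrac{N-2}{2}\!\int_{\R^N}\zeta_{R,\epsilon}|\nabla u|^2dx+\tfrac{N-b}{q}\!\int_{\R^N}\zeta_{R,\epsilon}\tfrac{|u|^q}{|x|^b}dx=\sum_{j=1}^m\tfrac{c_j(N-\eta_j)}{r_j}\!\int_{\R^N}\zeta_{R,\epsilon}\tfrac{|u|^{r_j}}{|x|^{\eta_j}}dx+E_{R,\epsilon},
\]
where $E_{R,\epsilon}$ collects every term carrying a derivative of $\zeta_{R,\epsilon}$; each such term is supported on the two annuli $\{R\leq|x|\leq 2R\}\cup\{\epsilon\leq|x|\leq 2\epsilon\}$.

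The delicate step is to let $\epsilon\to 0^+$ and $R\to\infty$. The integrals on the principal terms converge to their global counterparts by dominated convergence, since $|\nabla u|^2$, $|u|^q/|x|^b$, and $|u|^{r_j}/|x|^{\eta_j}$ are in $L^1(\R^N)$ by $u\in E_b^q$ and Lemma~\ref{Lemma Embedding with weight}. The main obstacle is to show $E_{R,\epsilon}\to 0$, especially near the origin where the weights are singular. The key observation is that $|x\cdot\nabla\zeta_{R,\epsilon}|$ is bounded by $2\|\nabla\phi\|_\infty$ uniformly in $R$ and $\epsilon$, because the factor $|x|$ cancels the $1/R$ (resp.\ $1/\epsilon$) arising from the chain rule on each annulus. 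Consequently every term in $E_{R,\epsilon}$ is pointwise dominated (up to a constant) by the indicator of the two annuli times one of the globally integrable quantities $|\nabla u|^2$, $|u|^q/|x|^b$, or $|u|^{r_j}/|x|^{\eta_j}$; absolute continuity of the Lebesgue integral yields $E_{R,\epsilon}\to 0$, and \eqref{Pohozaev identity} follows.
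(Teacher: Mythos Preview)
Your proposal is correct and follows the same three-step strategy as the paper: bootstrap interior regularity to $C^2(\R^N\setminus\{0\})$ via Calder\'on--Zygmund and Schauder estimates, test against the Pohozaev multiplier $x\cdot\nabla u$ localized away from $0$ and $\infty$, and then pass to the limit using the $L^1(\R^N)$ integrability of the densities $|\nabla u|^2$, $|u|^q/|x|^b$, and $|u|^{r_j}/|x|^{\eta_j}$. The only difference is cosmetic: you localize with smooth cutoffs $\zeta_{R,\epsilon}$ and kill the error terms directly via the uniform bound $|x|\,|\nabla\zeta_{R,\epsilon}|\le C$ together with dominated convergence, whereas the paper integrates over hard annuli $A_{r,R}$, obtains explicit surface integrals $\beta(t)$, and selects sequences $r_n\to 0^+$, $R_n\to\infty$ along which $\beta(r_n),\beta(R_n)\to 0$; both devices rest on exactly the same $L^1$ facts.
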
 

\begin{proof}
Let \(u \in E_b^q\) be a weak solution of \eqref{general problem}.  
We first show that \(u \in C^2(\mathbb{R}^N \setminus \{0\})\).  
Considering
\(h(x)=f(x,u(x))-|x|^{-b}{|u(x)|^{q-2}u(x)},\) we verify that  
\(h \in L^r_{\mathrm{loc}}(\mathbb{R}^N\setminus\{0\})\) for every  
\(r \ge 1\).  
When \(N=2\), this follows directly from the embeddings in 
Lemma~\ref{Lemma Embedding with weight};  
when \(N \ge 3\), it follows from Lemma~B.3 in \cite{struwe}.  
Standard Calderón--Zygmund estimates then give  
\(u \in W^{2,r}_{\mathrm{loc}}(\mathbb{R}^N\setminus\{0\})\) for all \(r\ge1\),  
and Schauder estimates yield
\(
u \in C^2(\mathbb{R}^N \setminus \{0\}).
\)

To prove the Pohozaev identity, we fix \(0<\rho<R\) and set \(A_{\rho,R}=\{x\in\mathbb{R}^N: \rho<|x|<R\}\).
Multiplying the equation \eqref{general problem} by \(x\cdot\nabla u\) and integrating over \(A_{\rho,R}\),
an application of the Green's identity gives
\[
\frac{N-2}{2}\int_{A_{\rho,R}} |\nabla u|^2 dx
+\frac{N-b}{q} \int_{A_{\rho,R}}\frac{|u|^q}{|x|^b}dx
=\sum_{j=1}^m c_j\,\frac{N-\eta_j}{r_j}
   \int_{A_{\rho,R}}\frac{|u|^{r_j}}{|x|^{\eta_j}}dx
   + \beta(R)-\beta(\rho),
\]
where
\[
\beta(t)=\frac{t}2\int_{\partial B_t}|\nabla u|^2dS-t\int_{\partial B_t}|\nabla u\cdot\nu|^2dS
+\frac{t}{q}\int_{\partial B_t}\frac{|u |^{q}}{|x|^{b}}dS-\sum_{j=1}^mc_j\frac{t}{r_j}\int_{\partial B_t}\frac{|u |^{r_j}}{|x|^{\eta_j}}dS.
\]
Since all integrands belong to \(L^1(\mathbb{R}^N)\), we may choose sequences
\(\rho_n\to 0^+\) and \(R_n\to\infty\) such that \(\beta(\rho_n)\to0\) and \(\beta(R_n)\to0\).  
Letting \(n\to\infty\) yields the identity \eqref{Pohozaev identity}.
\end{proof}

This Pohozaev identity leads to the following nonexistence result, which is 
similar to \cite[Theorem~9]{Sintzoff} (see also \cite[Theorem~4.11]{Pucci-Servadei}).
\begin{proposition}\label{prop nonexistence}
Assume $0\le b<2<q<2^*_b$ and consider $\eta\in [0,2)$. Let $u\in E^q_b\cap L^r_\eta(\mathbb R^N)$ be a weak solution of the problem
\begin{equation}\label{problem non}
    -\Delta u + \frac{|u|^{q-2}u}{|x|^b} = \frac{|u|^{r-2}u}{|x|^\eta}
    \qquad\text{in }\mathbb{R}^N.
\end{equation}
Assume that $u\in L^\infty_{loc}(\mathbb R^N\backslash\{0\})$ if $r>2^*$. Then $u\equiv 0$ if one of the following conditions occur: 
\begin{enumerate}
\item $r\ge 2^*_\eta$ or $1<r\le q(N-\eta)/(N-b)$, if $N\ge 3$;
\item $1<r\leq q(N-\eta)/(N-b)$, if $N=2$.
\end{enumerate}
\end{proposition}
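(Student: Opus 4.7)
The strategy is the classical Pohozaev/testing comparison. Setting
\[
A := \int_{\R^N}|\nabla u|^2\,dx,\qquad B := \int_{\R^N}\frac{|u|^q}{|x|^b}\,dx,\qquad C := \int_{\R^N}\frac{|u|^r}{|x|^\eta}\,dx,
\]
I would first test \eqref{problem non} with $u$ itself, which is justified because $u\in E^q_b\cap L^r_\eta(\R^N)$, to obtain $A+B=C$. I would then apply the Pohozaev identity of Proposition~\ref{Prop Pohozaev Identity} (with a single term, i.e.\ $m=1$, $c_1=1$, $\eta_1=\eta$, $r_1=r$), yielding
\[
\frac{N-2}{2}\,A + \frac{N-b}{q}\,B = \frac{N-\eta}{r}\,C.
\]
Eliminating $C$ between the two identities produces the key relation
\[
\alpha\,A + \beta\,B = 0,\qquad \alpha := \frac{N-2}{2}-\frac{N-\eta}{r},\qquad \beta := \frac{N-b}{q}-\frac{N-\eta}{r}.
\]

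The sign analysis is straightforward once one observes that the hypothesis $q<2^*_b$ is equivalent to $(N-b)/q>(N-2)/2$, so $\alpha<\beta$ strictly. If $r\geq 2^*_\eta$ (first branch of case~(1)), then $\alpha\geq 0$ forces $\beta>0$, and since $A,B\geq 0$ we conclude $B=0$, hence $u\equiv 0$. If $r\leq q(N-\eta)/(N-b)$ (both (1) and (2)), then $\beta\leq 0$ forces $\alpha<0$, so $A=0$; when $\beta<0$ this yields $B=0$ as well, while when $\beta=0$ we deduce only $\nabla u\equiv 0$, i.e.\ $u$ is constant. A nontrivial constant, however, cannot belong to $L^q_b(\R^N)$ because $\int_{|x|>1}|x|^{-b}\,dx=\infty$ whenever $b<N$, so $u\equiv 0$ in this edge case as well. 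In dimension $N=2$ the coefficient $\alpha=-(2-\eta)/r$ is automatically negative, and the same dichotomy applies throughout the range $1<r\leq q(2-\eta)/(2-b)$.

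The main technical hurdle is the validity of the Pohozaev identity in the supercritical range $r\geq 2^*_\eta$, which may lie outside the range $r\leq 2^*_\eta$ explicitly covered by Proposition~\ref{Prop Pohozaev Identity} (in particular when $r>2^*$, with $N\geq 3$). This is precisely the purpose of the added hypothesis $u\in L^\infty_{\mathrm{loc}}(\R^N\setminus\{0\})$: it replaces the missing subcritical bootstrap by directly placing $|u|^{r-2}u\,|x|^{-\eta}$ in $L^s_{\mathrm{loc}}(\R^N\setminus\{0\})$ for every $s\geq 1$, so that the Calder\'on--Zygmund plus Schauder argument of Proposition~\ref{Prop Pohozaev Identity} still gives $u\in C^2(\R^N\setminus\{0\})$ and the annulus integration-by-parts leading to \eqref{Pohozaev identity} carries over unchanged.
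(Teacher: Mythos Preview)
Your proof is correct and follows essentially the same approach as the paper's: combine the Pohozaev identity with the identity $A+B=C$ obtained by testing the equation against $u$, and read off $u\equiv 0$ from the signs of the coefficients $\alpha,\beta$. You are in fact slightly more careful than the paper, which simply writes ``from assumptions 1 or 2 it follows that $u=0$'': you explicitly treat the borderline case $\beta=0$ (where only $A=0$ is forced and one must rule out nonzero constants via $u\in L^q_b$), and you spell out how the extra hypothesis $u\in L^\infty_{\mathrm{loc}}(\R^N\setminus\{0\})$ supplies the $C^2$ regularity needed for the Pohozaev argument once $r>2^*$ falls outside the range covered by Proposition~\ref{Prop Pohozaev Identity}.
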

\begin{proof}
As in the proof of Proposition \ref{Prop Pohozaev Identity} we see that $u\in C^2(\R^N\backslash\{0\})$ if $r\leq 2^*$. If  $r> 2^*$ and we assume $u\in L^\infty_{loc}(\mathbb R^N\backslash\{0\})$ then we reach the same regularity. So, the Pohozaev identity \eqref{Pohozaev identity} holds for  $u$, which means that 
  \begin{equation*}
\frac{N-2}{2}\int_{\R^N}|\nabla u|^2dx+\frac{N-b}{q}\int_{\mathbb{R}^N} \frac{|u|^q}{|x|^b} dx
=\frac{N-\eta}{r}\int_{\mathbb{R}^N} \frac{|u|^{r}}{|x|^{\eta}} dx.
\end{equation*}
On the other hand, from \eqref{problem non} we get
\begin{equation*}
\frac{N-\eta}{r}\int_{\R^N}|\nabla u|^2dx+\frac{N-\eta}{r}\int_{\mathbb{R}^N} \frac{|u|^q}{|x|^b} dx
=\frac{N-\eta}{r}\int_{\mathbb{R}^N} \frac{|u|^{r}}{|x|^{\eta}} dx.
\end{equation*}
Subtracting one equation from another we obtain
\begin{equation*}
\left(\frac{N-2}{2}-\frac{N-\eta}{r}\right)\int_{\R^N}|\nabla u|^2dx+\left(\frac{N-b}{q}-\frac{N-\eta}{r}\right)\int_{\mathbb{R}^N} \frac{|u|^q}{|x|^b} dx=0.
\end{equation*}
Therefore, from assumptions \textit{1.} or \textit{2.} it follows that $u=0$.
\end{proof}
Notice that Lemma \ref{Lemma Embedding with weight} and Proposition \ref{prop nonexistence} show  that $2_\eta^{*}$ is a critical threshold for the embeddings and also that it plays the same role for the existence of nontrivial solutions, which is the expected behavior. Even more interesting is the fact that the number $q (N-\eta)/(N-b)$ also emerges as a threshold for existence. Observe that when $\eta \ge b$ we have
that the critical lower endpoint for the embeddings in Lemma~\ref{Lemma Embedding with weight} coincides with the corresponding nonexistence threshold. In contrast, when $\eta < b$ there is a gap 
\[
\frac{q (N-\eta)}{N-b} < r < 2^{*}_{b,q,\eta}.
\]
This gap is shortened in the radial case (see Section \ref{sectionradial}).

\section{Scaled operators on $E^q_b(\mathbb R^N)$}\label{section SOp}

Consider the definition of a scaling on a Banach space, as given in \cite[Definition 1.1]{MePe2025}.

\begin{definition}
    Let $W$ be a reflexive Banach space. A scaling on $W$ is a continuous mapping
$$W\times [0,\infty) \to W,\quad (u,t) \mapsto u_t$$ 
satisfying
\begin{enumerate}[label={($H_\arabic*$)},  % what appears in the margin
                  ref=$H_\arabic*$]               % what \ref returns
\item\label{H1} $(u_{t_1})_{t_2}=u_{t_1t_2}$ for all $u\in W$ and $t_1,t_2\geq0$,
\item%\label{H2} 
$(\tau u)_{t}=\tau u_{t}$ for all $u\in W$, $\tau\in\mathbb{R}$ and $t\geq0$,
\item\label{H3} $u_{0}=0$ and $u_{1}=u$ for all $u\in W$, 
    \item\label{H4} $u_{t}$ is bounded on bounded sets in $W\times [0,\infty)$, 
  \item\label{H5} $\exists \sigma >0$ such that $\|u_t\|=O(t^{\,\sigma})$ as $t\to\infty$,  uniformly in $u$ on bounded sets.
\end{enumerate}
\end{definition}
We have the following result.
\begin{lemma} Suppose $N\ge 2$ and let $0\le b<2<q<2^*_b$. Consider $u_t$ as defined in \eqref{scalingdef}. The map $E^q_b\times [0,\infty)\to E^q_b$ given by $(u,t)\mapsto u_t$
is a scaling on $E^q_b$.
\end{lemma}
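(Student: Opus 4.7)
The plan is to verify the algebraic axioms together with the boundedness and continuity requirements by exploiting one key identity. A change of variables $y=tx$ gives
\[
\int_{\mathbb{R}^N}|\nabla u_t|^2\,dx = t^{\,2\delta+2-N}\int_{\mathbb{R}^N}|\nabla u|^2\,dx,\qquad \int_{\mathbb{R}^N}\frac{|u_t|^q}{|x|^b}\,dx = t^{\,q\delta+b-N}\int_{\mathbb{R}^N}\frac{|u|^q}{|x|^b}\,dx.
\]
The definition $\delta=(2-b)/(q-2)$ makes both exponents coincide with $\ell:=\ell(b,q)$, and a short algebraic manipulation using $q<2^*_b$ shows $\ell>0$. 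Consequently
\[
\|u_t\|^2 \;=\; t^{\ell}\,\|\nabla u\|_2^2 \;+\; t^{\,2\ell/q}\,\|u\|_{L^q_b}^2.
\]

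From this, the algebraic properties are immediate: $(u_{t_1})_{t_2}(x)=t_2^\delta t_1^\delta u(t_1 t_2 x)$ gives \eqref{H1}, while $(H_2)$ and \eqref{H3} follow by inspection of the formula $u_t(x)=t^\delta u(tx)$. For \eqref{H4}, on the set $\{\|u\|\le M,\;0\le t\le T\}$ the above displayed identity bounds $\|u_t\|^2$ by $(T^{\ell}+T^{2\ell/q})M^2$. For \eqref{H5}, since $q>2$ and $\ell>0$ one has $t^{2\ell/q}\le t^{\ell}$ for $t\ge 1$, which yields $\|u_t\|=O(t^{\ell/2})$ uniformly on bounded sets; thus $\ell/2$ serves as the growth exponent required by the definition.

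The substantive step is continuity of $(u,t)\mapsto u_t$. I would first treat $u\in C_c^\infty(\mathbb{R}^N)$: for $t_n\to t>0$, the supports $\mathrm{supp}(u_{t_n})\subset B_{R/t_n}$ lie in a common ball, pointwise convergence $u_{t_n}\to u_t$ is clear, and both $|u_{t_n}|^q/|x|^b$ and $|\nabla u_{t_n}|^2$ admit a common $L^1$ dominant, using $b<N$ to ensure local integrability of $|x|^{-b}$; dominated convergence then gives $\|u_{t_n}-u_t\|\to 0$. The case $t=0$ is handled directly, since positivity of $\ell$ (hence of $2\ell/q$) forces $\|u_{t_n}\|^2\to 0$. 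I would then extend to all of $E^q_b$ using density of $C_c^\infty(\mathbb{R}^N)$ (Proposition~\ref{Density}) together with the estimate $\|(u-v)_s\|^2\le\max(s^\ell,s^{2\ell/q})\,\|u-v\|^2$ for $s$ in a compact interval containing $\{t_n\}\cup\{t\}$, via a triangle-inequality argument. Joint continuity in $(u,t)$ then follows by combining this estimate with the separate continuity in $t$. I expect the only delicate point to be the case $t=0$, where the scaling formally degenerates, but the positivity of $\ell$ suppresses both terms in $\|u_t\|^2$ simultaneously and so resolves the issue automatically.
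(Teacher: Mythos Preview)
Your proposal is correct and follows essentially the same approach as the paper: both verify \eqref{H1}--\eqref{H3} by inspection, compute the scaling identities for the gradient and weighted $L^q$ terms to obtain the bound $\|u_t\|\le\max\{t^{\ell/2},t^{\ell/q}\}\|u\|$ for \eqref{H4}--\eqref{H5}, and establish continuity via density of $C_c^\infty$. You spell out the continuity argument explicitly, whereas the paper simply cites \cite[Lemma~3.1]{MePe2025}, but the underlying mechanism is identical.
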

 \begin{proof}  
 It is straightforward to verify that \eqref{H1}-\eqref{H3} hold. To prove \eqref{H4} and \eqref{H5}, recall the definition of $\ell=(q\delta + b)-N$, given in \eqref{l(eta,r)}. We have
$$
\|\nabla u_t\|_{2}^2=t^{\,\ell}\|\nabla u\|_{2}^2\quad\text{and}\quad  \int_{\mathbb{R}^N} \frac{|u_t|^q}{|x|^b} dx =t^{\,\ell} \int_{\mathbb{R}^N} \frac{|u|^q}{|x|^b} dx
$$
for all $u\in E^q_b$ and $t\geq0$. Therefore
\begin{equation}\label{normscaledineq}
\|u_t\|\leq\max\{t^{\,\ell/2},t^{\,\ell/q}\}\|u\|,\quad\forall (u,t)\in E^q_b\times [0,\infty),
\end{equation}
which implies that \eqref{H4} and \eqref{H5} are satisfied with $\sigma=\ell$. Continuity follows then as in \cite[Lemma 3.1]{MePe2025} as a result of the density of $C^\infty_c(\mathbb{R}^N)$ in $E^q_b$. 
\end{proof}

Let $\tilde{h}\in C(E^q_b,(E^q_b)^*)$. We say that $\tilde{h}$ is a \emph{potential operator} if there exists a functional $\tilde{H}\in C^1(E^q_b,\mathbb{R})$, called a \emph{potential} of $\tilde{h}$, such that $\tilde{h}=\tilde{H}'$. Without loss of generality, we may take $\tilde{H}(0)=0$ (if necessary, by considering $\tilde{H}-\tilde{H}(0)$).

An odd potential operator $\tilde{h}$ is said to be a \emph{scaled operator} provided it maps bounded subsets of $E^q_b$ into bounded subsets of $(E^q_b)^*$ and, for $\sigma$ as in \eqref{H5}, it satisfies
\begin{equation}\label{scaledoperator}
    \tilde{h}(u_t)v_t = t^{\,\sigma}\tilde{h}(u)v \quad\text{for all }u,v\in E^q_b\text{ and }t\ge 0.
\end{equation}

According to \cite[Proposition~2.2]{MePe2025}, whenever $\tilde{h}$ is a potential operator, the corresponding potential $\tilde{H}$ with $\tilde{H}(0)=0$ can be expressed as
\begin{equation*}
    \tilde{H}(u) = \int_{0}^{1}\tilde{h}(\tau u)\,u\,d\tau, \qquad u\in E^q_b.
\end{equation*}
It follows from this representation that $\tilde{H}$ is even whenever $\tilde{h}$ is odd, and that $\tilde{H}$ maps bounded sets to bounded sets if the same holds for $\tilde{h}$. In addition, when $\tilde{h}$ satisfies the scaling condition \eqref{scaledoperator}, the potential $\tilde{H}$ inherits the same homogeneity property:
\[
\tilde{H}(u_t)=t^{\,\sigma}\tilde{H}(u), \qquad u\in E^q_b,\, t\ge0.
\]

With the above notions at hand, we now introduce the odd potential operators
\[
\mathcal{A},\,\mathcal{B}:E^q_b\to (E^q_b)^*,
\]
defined respectively by
\begin{equation}\label{defPotAB}
\mathcal{A}(u):=-\Delta u+\frac{|u|^{q-2}u}{|x|^b}
\quad\text{and}\quad
\mathcal{B}(u):=\frac{|u|^{p-2}u}{|x|^a}.
\end{equation}
That means
\begin{equation*}
    \mathcal{A}(u)v
    =\int_{\mathbb{R}^N}\nabla u\nabla v\,dx
     +\int_{\mathbb{R}^N}\frac{|u|^{q-2}u\,v}{|x|^b}\,dx,
\end{equation*}
and
\begin{equation*}
    \mathcal{B}(u)v
    =\int_{\mathbb{R}^N}\frac{|u|^{p-2}u\,v}{|x|^a}\,dx.
\end{equation*}
Here $2<p<q$ and the parameters satisfy $0\le b<a<2$, as specified in \eqref{a def}.  Both $\mathcal{A}$ and $\mathcal{B}$ fulfill the scaling condition \eqref{scaledoperator} with  $\sigma=\ell$,
which is consistent with assumption \eqref{H5}. Furthermore, the continuous embedding 
\[
E^q_b \hookrightarrow L^p_a(\mathbb{R}^N),
\]
provided by Corollary~\ref{corollary_emb}, ensures that $\mathcal{B}$ maps bounded sets in $E^q_b$ to bounded sets in $(E^q_b)^*$. The same property is immediate for $\mathcal{A}$. Consequently, both $\mathcal{A}$ and $\mathcal{B}$ are \emph{scaled potential operators}.

 Notice that if $u\in E^q_b$ satisfies $\mathcal{A}(u)=\lambda\,\mathcal{B}(u)$, that is, if $u$ is a solution of problem \eqref{NE problem}, then, by the scaling properties of $\mathcal{A}$ and $\mathcal{B}$, its rescaled function $u_t$ is also a solution for every $t\ge0$. Associated with the operators $\mathcal{A}$ and $\mathcal{B}$, we introduce the corresponding potentials 
\(I,\,J:E^q_b\rightarrow\mathbb{R},\)
defined by
\begin{equation}\label{defPotIJ}
I(u)
=\frac{1}{2}\int_{\mathbb{R}^N}|\nabla u|^{2}\,dx
+\frac{1}{q}\int_{\mathbb{R}^N}\frac{|u|^{q}}{|x|^{b}}\,dx,
\qquad
J(u)
=\frac{1}{p}\int_{\mathbb{R}^N}\frac{|u|^{p}}{|x|^{a}}\,dx.
\end{equation}

In order to apply the abstract critical point framework developed in \cite{MePe2025}, 
we must verify that the operators $\mathcal{A}$ and $\mathcal{B}$, as well as their corresponding potentials $I$ and $J$, satisfy the structural properties required in that setting. 
They are the following:
\begin{enumerate}[label={($H_{\arabic*}$)},  % what appears in the margin
                  ref=$H_{\arabic*}$,start=6]               % what \ref returns
    \item\label{H6} $\mathcal{A}(u)u>0$ for all $u\in E^q_b\backslash\{0\}.$
    \item\label{H7}  If $u_n\rightharpoonup u$ in $E$ and $\mathcal{A}(u_n)(u_n-u)\to0$ then $(u_n)$ has a subsequence that converges strongly to $u$.
    \item\label{H8} $\mathcal{B}(u)u>0$ for all $u\in E^q_b\backslash\{0\}.$
    \item\label{H9} If $u_n\rightharpoonup u$ in $E^q_b$ then $\mathcal{B}(u_n)\to \mathcal{B}(u)$ in $(E^q_b)^*$.
    \item\label{H10} $I$ is coercive, i.e., $I(u)\to\infty$ as $\|u\|\to\infty.$ 
    \item\label{H11} For each $u\in E^q_b\backslash\{0\}$ there is a unique $t>0$ such that $I(tu) = 1.$
    \item\label{H12} Every solution $u$ of equation \eqref{NE problem} satisfies $I(u)=\lambda J(u).$
\end{enumerate}

We check these conditions in the following lemma.
\begin{lemma}% uses that $\dot{H}^s(\mathbb{R}^N)$ is Hilbert.
Assume \eqref{basicparametershypothesis}. Let $\mathcal A,\mathcal B$ and $I$, $J$ as defined in \eqref{defPotAB} and \eqref{defPotIJ}. Then, the conditions \eqref{H6}-\eqref{H12} hold true.
\end{lemma}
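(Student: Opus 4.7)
The plan is to verify the seven conditions in turn, grouping the elementary ones and concentrating effort on the three that require genuine input.

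The positivity conditions $(H_6)$ and $(H_8)$ are immediate from the definitions, since $\mathcal{A}(u)u=\|\nabla u\|_2^2+\|u\|_{L^q_b}^q$ and $\mathcal{B}(u)u=\|u\|_{L^p_a}^p$, both vanishing only at $u\equiv0$. For coercivity $(H_{10})$, I would observe that if $\|u\|\to\infty$ then either $\|\nabla u\|_2\to\infty$ or $\|u\|_{L^q_b}\to\infty$; in either case $I(u)\to\infty$, since $q>2$ forces the weighted $L^q$ term to dominate the $\|u\|_{L^q_b}^2$ contribution to the norm. For $(H_{11})$, the map $t\mapsto I(tu)=(t^2/2)\|\nabla u\|_2^2+(t^q/q)\|u\|_{L^q_b}^q$ is strictly increasing from $0$ to $+\infty$ on $(0,\infty)$ for every nonzero $u$, giving the unique $t$ with $I(tu)=1$.

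For $(H_9)$, I would invoke the compact embedding $E^q_b\hookrightarrow L^p_a$ from Corollary~\ref{corollary_emb}: weak convergence $u_n\rightharpoonup u$ in $E^q_b$ yields strong convergence $u_n\to u$ in $L^p_a$. The Nemytskii map $u\mapsto|u|^{p-2}u$ is continuous from $L^p_a$ into $L^{p'}_a$ (with $p'=p/(p-1)$), as one sees by the substitution $v=|x|^{-a/p}u$ reducing to the classical unweighted case. Pairing this against a test function $v\in E^q_b$ with Hölder in the measure $|x|^{-a}dx$ and using $\|v\|_{L^p_a}\le C\|v\|$ gives $\|\mathcal{B}(u_n)-\mathcal{B}(u)\|_{(E^q_b)^*}\to 0$. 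For $(H_7)$, I would run the standard $(S_+)$ argument: from $\mathcal{A}(u_n)(u_n-u)\to 0$ together with $\nabla u_n\rightharpoonup\nabla u$ in $L^2$ and $u_n\rightharpoonup u$ in $L^q_b$, one obtains $(\mathcal{A}(u_n)-\mathcal{A}(u))(u_n-u)\to 0$. This quantity splits as $\|\nabla(u_n-u)\|_2^2$ plus $\int(|u_n|^{q-2}u_n-|u|^{q-2}u)(u_n-u)|x|^{-b}dx$, both of which are nonnegative, the second bounded below by $c\int|u_n-u|^q|x|^{-b}dx$ by the usual Clarkson inequality for $q\ge 2$. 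Hence both summands vanish in the limit and $u_n\to u$ in $E^q_b$.

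The main obstacle is $(H_{12})$, which requires combining two identities. Testing the equation \eqref{NE problem} against $u$ itself yields $\|\nabla u\|_2^2+\|u\|_{L^q_b}^q=\lambda\|u\|_{L^p_a}^p$, and Proposition~\ref{Prop Pohozaev Identity} applied to $f(x,u)=\lambda|x|^{-a}|u|^{p-2}u$ gives
\begin{equation*}
\frac{N-2}{2}\|\nabla u\|_2^2+\frac{N-b}{q}\|u\|_{L^q_b}^q=\lambda\frac{N-a}{p}\|u\|_{L^p_a}^p.
\end{equation*}
Setting $A=\|\nabla u\|_2^2$, $B=\|u\|_{L^q_b}^q$, $C=\lambda\|u\|_{L^p_a}^p$ and eliminating $C$ between these two relations, I would then use the defining identities $p-a=(p-2)(1+\delta)$ and $qa-pb=(q-p)(b+q\delta)$ together with $\delta(q-2)=2-b$ to factor the resulting linear relation in $A,B$ as
\begin{equation*}
(N-2-2\delta)\left[\tfrac{p-2}{2p}\,A-\tfrac{q-p}{pq}\,B\right]=0.
\end{equation*}
The hypothesis $q<2^*_b$ is equivalent to $\delta\neq(N-2)/2$ when $N\ge 3$, and the factor is $-2\delta<0$ when $N=2$; in either case the bracketed term vanishes. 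Combined with $A+B=C$, this yields $A/2+B/q=C/p$, which is exactly $I(u)=\lambda J(u)$. The delicate point here is checking that the ``degenerate'' case of the common factor corresponds precisely to the excluded critical exponent $q=2^*_b$, so that the algebraic collapse is automatic under \eqref{basicparametershypothesis}.
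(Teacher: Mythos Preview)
Your proof is correct and follows essentially the same approach as the paper: the elementary conditions are dispatched quickly, $(H_9)$ via the compact embedding of Corollary~\ref{corollary_emb}, and $(H_{12})$ by combining the tested equation with the Pohozaev identity of Proposition~\ref{Prop Pohozaev Identity}, exploiting that the degenerate case of the resulting linear system corresponds exactly to the excluded exponent $q=2^*_b$. The only noteworthy difference is in $(H_7)$: the paper passes to a subsequence and invokes the Brezis--Lieb lemma to write $\int|x|^{-b}|u_n|^{q-2}u_n(u_n-u)\,dx=\|u_n-u\|_{L^q_b}^q+o(1)$, whereas your route through the monotonicity inequality $(|s|^{q-2}s-|t|^{q-2}t)(s-t)\ge c_q|s-t|^q$ avoids the subsequence extraction and is slightly more elementary. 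Your algebraic organization of $(H_{12})$ (eliminate $C$, then factor out $N-2-2\delta$) is equivalent to the paper's (multiply Pohozaev by $q-2$, the tested equation by $2-b$, subtract, and divide by $2(N-b)-q(N-2)$); both identify the same common factor, since $N-2-2\delta=[(N-2)(q-2)-2(2-b)]/(q-2)$.
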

\begin{proof}
Conditions \eqref{H6}, \eqref{H8}, and \eqref{H10} are immediate.  Moreover, \eqref{H9} holds because of the compact embedding
\(
E^q_b \hookrightarrow L^p_a(\mathbb{R}^N)
\quad\text{(Corollary~\ref{corollary_emb})}
\). Then, since $I(0)=0$ and, for every fixed $u\in E^q_b\setminus\{0\}$, the function $t\mapsto I(tu)$ is strictly increasing for $t\ge0$, \eqref{H11} also holds. It remains to verify \eqref{H7} and \eqref{H12}. To prove that \eqref{H7} occurs, assume that $u_n\rightharpoonup u$ in $E^q_b$ and $\mathcal{A}(u_n)(u_n-u)\to0$. 
  Note that
   $$
  \mathcal{A}(u_n)(u_n-u)= \int_{\mathbb{R}^N}\nabla u_n\nabla(u_n-u)dx +\int_{\mathbb{R}^N}\frac{|u_n|^{q-2}u_n(u_n-u)}{|x|^b}\,dx.
   $$
   Since $u_n\rightharpoonup u$ in $E^q_b$, we also have $\nabla u_n\rightharpoonup \nabla u$ weakly in $L^2(\mathbb{R}^N)$. This yields
    \begin{equation*}
 \int_{\mathbb{R}^N}\nabla u_n\nabla(u_n-u)dx= \|\nabla(u_n-u)\|_{2}^2+o_n(1).
   \end{equation*}
Moreover, since the embedding $E^q_b\hookrightarrow L^q_b(\mathbb R^N)$ is continuous (Lemma \ref{Lemma Embedding with weight}), we must have $u_n\rightharpoonup u$ in $L^q_b(\mathbb R^N)$ as well. {So, a standard duality argument gives that 
$$
\int_{\mathbb{R}^N}\frac{|u_n|^{q-2}u_nu}{|x|^b}\,dx\rightarrow \int_{\mathbb{R}^N}\frac{|u|^{q}}{|x|^b}\,dx,
$$
 up to a subsequence.}
Then, the Brezis-Lieb lemma (with weights) gives
$$
\int_{\mathbb{R}^N}\frac{|u_n|^{q-2}u_n(u_n-u)}{|x|^b}\,dx=\int_{\mathbb{R}^N}\frac{|u_n-u|^q}{|x|^{b}}dx+o_n(1).
$$
Thus, up to a subsequence, we have
$$
\mathcal{A}(u_n)(u_n-u)= \|\nabla(u_n-u)\|_{2}^2+ \int_{\mathbb{R}^N}\frac{|u_n-u|^q}{|x|^{b}}dx+o_n(1),
$$
which concludes the proof of \eqref{H7}.

Finally, suppose that $u$ is a nontrivial solution of $\eqref{NE problem}$. Then,  $\mathcal{A}(u)=\lambda \mathcal{B}(u)$ (notice that, due to \eqref{H6} and \eqref{H8}, we have $\lambda>0$). Then, using $u$ as a test function we have
\begin{equation}\label{uastestfunc}
\int_{\R^N}|\nabla u|^2dx+\int_{\mathbb{R}^N} \frac{|u|^q}{|x|^b} dx
=\lambda\int_{\mathbb{R}^N} \frac{|u|^p}{|x|^a} dx.
\end{equation}
Moreover, the Pohozaev Identity \eqref{Pohozaev identity} gives
\begin{equation}\label{Pohozaev_1.1}
\frac{N-2}{2}\int_{\R^N}|\nabla u|^2dx+\frac{N-b}{q}\int_{\mathbb{R}^N} \frac{|u|^q}{|x|^b} dx
=\frac{\lambda(N-a)}{p}\int_{\mathbb{R}^N} \frac{|u|^p}{|x|^a} dx.
\end{equation}
Multiply \eqref{Pohozaev_1.1} by $q-2$ and \eqref{uastestfunc} by $2-b$ and subtract one from another. Then, divide the resulting equation by $2(N-b)-q(N-2)$, which is nonzero, and we reach \eqref{H12}.
\end{proof}

%%%%%%%%%%%%%%%%%%%%%%%%%%%%%%%%%%%%%%%%%%%%%%%%%%%%%%%%%%%%%%%%%%%%%%%%%%%%%%%%

\subsection{Scaled eigenvalue problem}%\label{subsection SEP}

We continue to explore the scaled eigenvalue problem $\mathcal{A}=\lambda \mathcal{B}$, i.e. problem \eqref{NE problem} with $a>0$ as defined in \eqref{a def}. Whenever \eqref{NE problem} admits a nontrivial solution \(u\in E^q_b\), 
we refer to the corresponding parameter \(\lambda\) as an \emph{eigenvalue}, 
and to \(u\) as an \emph{eigenfunction} associated with \(\lambda\). 
It was shown above that the hypotheses \eqref{H6}--\eqref{H12} 
are fulfilled by the scaled operators \(\mathcal{A}\) and \(\mathcal{B}\), 
together with their respective potentials \(I\) and \(J\), as given in \eqref{defPotIJ}. 
We denote by \(\sigma(\mathcal{A},\mathcal{B})\) the set of all eigenvalues 
of problem \eqref{NE problem}, and call it the \emph{spectrum} 
of the pair of scaled operators \((\mathcal{A},\mathcal{B})\). 
From \eqref{H6} and \eqref{H8}, it follows that 
\(
\sigma(\mathcal{A},\mathcal{B}) \subset (0,\infty).
\) We set
$$
\mathcal{M}=\{u\in E^q_b: I(u)=1\},
$$
and define the projection
$
\pi:E^q_b\backslash\{0\}\to\mathcal{M}
$
as
\begin{equation}\label{def pi}
    \pi(u)=u_{t_u},\quad t_u=\left({I(u)}\right)^{-1/\ell}
\end{equation}
where $u_t$ is the scaling defined in \eqref{scalingdef} and $\ell$ is given in \eqref{l(eta,r)}.  We also define
\begin{equation}\label{def Psi}
\Psi(u)=\frac{1}{J(u)},\quad u\in E^ q_b\backslash\{0\},\quad\mbox{and}\quad \tilde{\Psi}=\Psi_{|_\mathcal{M}}.
\end{equation}
We observe that \(\mathcal{M}\) is a complete, symmetric, and bounded 
\(C^{1}\)-Finsler manifold, and that the eigenvalues of problem 
\eqref{NE problem} coincide with the critical values of 
\(\Tilde{\Psi}\), as established in \cite[Proposition~2.5]{MePe2025}. 
Let \(\mathcal{F}\) denote the family of symmetric subsets of 
\(\mathcal{M}\), and let \(i(Y)\) represent the cohomological index of 
\(Y\in\mathcal{F}\) in the sense of Fadell and Rabinowitz 
(see \cite{fadel-rabi}). For each \(k\ge1\), define
\[
\mathcal{F}_k = \{\,Y\in\mathcal{F} : i(Y)\ge k\,\},
\]
and set
\[
\lambda_k := \inf_{Y\in\mathcal{F}_k}\sup_{u\in Y}\Psi(u).
\]
The following theorem follows directly from an abstract variational 
result proved in \cite{MePe2025}. It guarantees that the sequence 
\((\lambda_k)\) consists of \emph{nonlinear eigenvalues} of 
\eqref{NE problem} and provides additional properties, 
which are stated below.
\begin{theorem}[\cite{MePe2025}, Theorem 1.3]\label{lambdak}
Assume \eqref{basicparametershypothesis}. Then $\lambda_k\nearrow \infty$ is a sequence of eigenvalues of \eqref{NE problem}. Moreover,
\begin{enumerate}[label={(\roman*)},  % what appears in the margin
                  ref=(\roman*)]               % what \ref returns
\item\label{lambdak(i)} The first eigenvalue is given by
$$
\lambda_1=\min_{u\in\mathcal{M}}\tilde{\Psi}(u)>0.
$$
\item%\label{lambdak(ii)}  
If $\lambda_k = \cdots = \lambda_{k+m-1} = \lambda$, then $i(E_\lambda)\geq m$, where $E_\lambda$ is the set of eigenfunctions associated with $\lambda$ that lie on $\mathcal{M}$.
\item\label{lambdak(iii)} If $\lambda_k<\lambda<\lambda_{k+1}$, then
$$i(\tilde{\Psi}^{\lambda_k} ) = i(\mathcal{M}\backslash\tilde{\Psi}_{\lambda}) = i(\tilde{\Psi}^{\lambda} ) 
= i(\mathcal{M}\backslash \tilde{\Psi}_{\lambda_{k+1}}) = k,$$
where 
$\tilde{\Psi}^\alpha=\{ u\in \mathcal{M} : \tilde{\Psi}(u) \leq \alpha\}$ and 
$\tilde{\Psi}_\alpha = \{ u\in \mathcal{M} : \tilde{\Psi}(u) \geq \alpha\}$ for $\alpha\in\mathbb{R}$.
\end{enumerate}
\end{theorem}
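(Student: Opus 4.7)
The plan is to deduce the theorem from the abstract critical-point result of Mercuri and Perera, since the hypotheses \eqref{H1}--\eqref{H12} have already been verified for the pair $(\mathcal{A},\mathcal{B})$ and their potentials $(I,J)$. With these in place, eigenvalues of \eqref{NE problem} are in bijection with constrained critical values of the even functional $\tilde{\Psi}=1/J$ on the $C^{1}$-Finsler manifold $\mathcal{M}=\{u\in E^q_b:I(u)=1\}$, the corresponding Lagrange multiplier at a critical point $u$ being $\tilde{\Psi}(u)$.

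First I would record the geometric and topological properties of $\mathcal{M}$ needed to run a min--max scheme: symmetry follows from $I$ being even, boundedness from the coercivity \eqref{H10}, and completeness from continuity of $I$ together with reflexivity of $E^q_b$. Because the projection $\pi$ in \eqref{def pi} is a continuous odd retraction of $E^q_b\setminus\{0\}$ onto $\mathcal{M}$, the manifold carries the same $\mathbb{Z}_2$-equivariant homotopy type as the unit sphere, so every odd continuous image of $S^{k-1}$ lies in $\mathcal{F}_k$ and $\mathcal{F}_k\neq\emptyset$ for all $k\ge 1$. Next I would establish the Palais--Smale condition on $\mathcal{M}$: given $(u_n)\subset\mathcal{M}$ with $\tilde{\Psi}(u_n)\to c$ and $\nabla_{\mathcal{M}}\tilde{\Psi}(u_n)\to 0$, the sequence is bounded, so up to a subsequence $u_n\rightharpoonup u$; the multiplier equation $\mathcal{A}(u_n)=\mu_n\mathcal{B}(u_n)+o(1)$ with $\mu_n\to c$, combined with the compactness \eqref{H9} of $\mathcal{B}$, gives $\mathcal{A}(u_n)(u_n-u)\to 0$, and then \eqref{H7} upgrades this to strong convergence. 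Note that $c>0$ since $\mathcal{M}$ is bounded and $J$ is continuous and positive on $\mathcal{M}\setminus\{0\}$, so no spurious zero limit occurs.

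Once (PS) holds, the standard cohomological-index min--max together with the equivariant deformation lemma yields that each level
\[
\lambda_k=\inf_{Y\in\mathcal{F}_k}\sup_{u\in Y}\tilde{\Psi}(u)
\]
is a critical value of $\tilde{\Psi}$, hence an eigenvalue of \eqref{NE problem}. Item \ref{lambdak(i)} follows from $\mathcal{F}_1=\mathcal{F}\setminus\{\emptyset\}$ and the positivity just noted. For the multiplicity claim, if $\lambda_k=\cdots=\lambda_{k+m-1}=\lambda$ but $i(E_\lambda)<m$, the continuity and subadditivity of the Fadell--Rabinowitz index furnish a symmetric neighbourhood $U$ of $E_\lambda$ with $i(U)\le m-1$; the negative pseudogradient flow then deforms a minimizing $Y\in\mathcal{F}_{k+m-1}$ outside a sublevel strictly below $\lambda$, contradicting the definition of $\lambda_{k+m-1}$. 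Property \ref{lambdak(iii)} follows from monotonicity of the index under the same equivariant deformation retracts of $\tilde{\Psi}^{\lambda_k}$ and $\mathcal{M}\setminus\tilde{\Psi}_{\lambda_{k+1}}$, exploiting that no critical value lies in $(\lambda_k,\lambda_{k+1})$.

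The hardest step will be showing $\lambda_k\nearrow\infty$. Arguing by contradiction, suppose $\lambda_k\nearrow\lambda^{*}<\infty$. Pick $Y_k\in\mathcal{F}_k$ with $\sup_{Y_k}\tilde{\Psi}\le\lambda^{*}+1$, so $Y_k\subset\tilde{\Psi}^{\lambda^{*}+1}$; if this sublevel set had an equivariant deformation onto a set of finite index, the values $i(Y_k)\ge k$ would be bounded, a contradiction. To preclude the deformation I would use (PS) to show that the critical set of $\tilde{\Psi}$ in $[\lambda_1,\lambda^{*}+1]$ is compact modulo the $\mathbb{Z}_2$-action, and then invoke the continuity property of $i$ on compact symmetric sets to bound $i(\tilde{\Psi}^{\lambda^{*}+1})$. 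This is exactly where the compact embedding of Corollary~\ref{corollary_emb}, encoded as \eqref{H9}, is indispensable; without it, the min--max levels could accumulate at a finite value as in purely continuous (non-compact) spectral problems.
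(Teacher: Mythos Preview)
Your approach is the same as the paper's: the paper does not give an independent proof of this theorem but simply invokes \cite[Theorem~1.3]{MePe2025} after having checked \eqref{H1}--\eqref{H12} for the concrete operators $(\mathcal{A},\mathcal{B})$ and potentials $(I,J)$. Your opening sentence already states exactly this reduction, so the proof is complete at that point.

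The remainder of your proposal is an outline of how \cite[Theorem~1.3]{MePe2025} itself is proved, which goes beyond what the present paper does. That outline is essentially sound: the identification of eigenvalues with constrained critical values of $\tilde{\Psi}$, the verification of (PS) on $\mathcal{M}$ via \eqref{H7} and \eqref{H9}, and the index arguments for (i)--(iii) are all standard and correctly sketched. One wording slip: in the unboundedness step you write ``To preclude the deformation'' when you clearly mean to \emph{produce} it (you want to show $i(\tilde{\Psi}^{\lambda^*+1})<\infty$, not the opposite). The underlying mechanism you indicate---(PS) forces the critical set $K\subset\tilde{\Psi}^{-1}[\lambda_1,\lambda^*+1]$ to be compact, hence $i(K)<\infty$, and then the second deformation lemma retracts $\tilde{\Psi}^{\lambda^*+1}$ onto a neighbourhood $U$ of $K$ with $i(U)=i(K)$---is the correct one, but the phrase ``continuity property of $i$ on compact symmetric sets to bound $i(\tilde{\Psi}^{\lambda^{*}+1})$'' elides the deformation step that actually connects the critical set to the full sublevel set. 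Since this all lives in the cited reference rather than in the present paper, it does not affect the validity of your proof here.
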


\section{Proof of the main theorems}\label{sectionproofs}

Our existence and multiplicity results for problem~\eqref{general problem} will be obtained by means of critical point theorems, primarily those in \cite{MePe2025} together with an abstract result from \cite{Perera-book}. For the reader’s convenience, several of these tools are recalled below  (see Propositions~\ref{Theorem 2.33 MP}, \ref{critical point result} and Corollary~\ref{Corollary 2.34 MP}).  

As is standard in critical point theory, compactness assumptions such as the Palais--Smale condition, either in its global or localized form, are required for the application of these results.  We begin by establishing that such compactness properties indeed hold for the functionals considered in this work.  We then prove the main theorems, organizing the presentation according to the growth regimes involved: subscaled, superscaled, and subcritical or critical, as well as combinations of these settings.

\subsection{On the Palais-Smale condition}
 This subsection is devoted to verifying the Palais-Smale condition for the functionals defined in \eqref{functionals}.
 We start with a subcritical superscaled case. See Definition \ref{defscaled} for the definition of an admissible and superscaled pair $(\eta,r)$.
 
\begin{lemma}\label{PS-superscaled-subcritical}
Under the assumptions of Theorem \ref{Th-superscaled-subcritical}, the functional $\Phi$ defined in~\eqref{functionals} satisfies the (PS) condition.
\end{lemma}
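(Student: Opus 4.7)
The plan is to show that any Palais--Smale sequence $(u_n)\subset E_b^q$ for $\Phi$---i.e., one with $\Phi(u_n)$ bounded and $\Phi'(u_n)\to 0$ in $(E_b^q)^*$---admits a strongly convergent subsequence, proceeding in the two classical steps: first boundedness of $(u_n)$ in $E_b^q$, then upgrading weak to strong convergence.

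The natural tool for boundedness is the Ambrosetti--Rabinowitz inequality $h(t)t\ge rH(t)$ from \eqref{AR h}, applied with the superscaled exponent $r>q>p>2$. The identity
\[
r\Phi(u_n)-\Phi'(u_n)u_n
=\tfrac{r-2}{2}\|\nabla u_n\|_2^2+\tfrac{r-q}{q}\|u_n\|_{L^q_b}^q-\lambda\tfrac{r-p}{p}\|u_n\|_{L^p_a}^p
+\int_{\R^N}\tfrac{h(u_n)u_n-rH(u_n)}{|x|^\eta}\,dx,
\]
together with the nonnegativity of the last integral and the bound $r|\Phi(u_n)|+|\Phi'(u_n)u_n|\le C+o(1)\|u_n\|$, yields
\[
c_1\|\nabla u_n\|_2^2+c_2\|u_n\|_{L^q_b}^q\le C+o(1)\|u_n\|+c_3\,\lambda_+\,\|u_n\|_{L^p_a}^p,
\]
with $c_1,c_2,c_3>0$. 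When $\lambda\le 0$ this is already coercive, since $\|u_n\|^2\le\|\nabla u_n\|_2^2+\|u_n\|_{L^q_b}^2$ and the right-hand side grows only linearly in $\|u_n\|$. When $\lambda>0$ I would absorb the $L^p_a$ term by applying the CKN interpolation of Lemma \ref{Lemma Embedding with weight} to the scaled pair $(a,p)$, namely $\|u\|_{L^p_a}^p\le C\|\nabla u\|_2^{\theta p}\|u\|_{L^q_b}^{(1-\theta)p}$ with $\tfrac{\theta p}{2}+\tfrac{(1-\theta)p}{q}=1$, combined with a parametric Young's inequality.

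With boundedness in hand, reflexivity of $E_b^q$ furnishes a subsequence $u_n\rightharpoonup u$ in $E_b^q$. The compact embedding $E_b^q\hookrightarrow L^p_a$ from Corollary \ref{corollary_emb} gives $u_n\to u$ strongly in $L^p_a$, while the compact embeddings $E_b^q\hookrightarrow L^{r_i}_\eta$ from Lemma \ref{Lemma Embedding with weight}---valid because each $(\eta,r_i)$ lies in the interior of the admissible range---give $u_n\to u$ in $L^{r_i}_\eta$ for $i=1,2$. The growth condition $|h(t)|\le c_1|t|^{r_1-1}+c_2|t|^{r_2-1}$ together with H\"older's inequality then forces $\int_{\R^N}|x|^{-\eta}h(u_n)(u_n-u)\,dx\to 0$, and the $L^p_a$-term is controlled analogously. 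Testing $\Phi'(u_n)$ against $u_n-u$ thus reduces the problem to
\[
\mathcal{A}(u_n)(u_n-u)\to 0,
\]
and property \eqref{H7}, verified in Section \ref{section SOp}, upgrades $u_n\rightharpoonup u$ to $u_n\to u$ strongly in $E_b^q$.

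The main obstacle is the boundedness step when $\lambda>0$. Since $(a,p)$ is precisely the scaled pair, the CKN interpolation exponents $\theta p/2$ and $(1-\theta)p/q$ sum exactly to one, so the parametric Young's inequality is borderline and forces a genuine trade-off between the gradient and $L^q_b$ contributions. The absorption succeeds because the superscaled AR exponent $r>q>p$ provides strictly positive margins $(r-2)$, $(r-q)$, and $(r-p)$ in the leading coefficients, leaving enough room to dominate the $\lambda$-dependent loss once the trade-off parameter in Young's inequality is tuned appropriately.
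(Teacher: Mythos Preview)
Your strong-convergence step is correct and essentially matches the paper's: once boundedness is known, compactness of the embeddings into $L^p_a$ and $L^{r_i}_\eta$ together with property \eqref{H7} finishes the proof.

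The boundedness argument, however, has a genuine gap when $\lambda>0$ is large. The issue is precisely what you identified as ``borderline'' but then dismissed: since $(a,p)$ is the \emph{scaled} pair, the three quantities $\|\nabla u\|_2^2$, $\|u\|_{L^q_b}^q$ and $\|u\|_{L^p_a}^p$ all transform as $t^{\ell}$ under $u\mapsto u_t$. Hence no choice of the trade-off parameter in Young's inequality can make
\[
c_1\|\nabla u\|_2^2+c_2\|u\|_{L^q_b}^q-\lambda c_3\|u\|_{L^p_a}^p
\]
nonnegative on all of $E^q_b$ once $\lambda$ is large; in fact, for the first eigenfunction $u_*$ at $\lambda_1$ (Theorem~\ref{lambdak}) this expression vanishes when $\lambda=\lambda_1$ and is strictly negative for $\lambda>\lambda_1$, and stays negative along the whole ray $\{(u_*)_t\}$. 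The positivity of the margins $(r-2),(r-q),(r-p)$ only fixes the signs of $c_1,c_2,c_3$; it gives no extra power of the norm that could dominate the scaled term.

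What you discarded via the AR inequality is exactly what is needed. The paper argues by contradiction: if $\|u_n\|\to\infty$, project $\tilde u_n=\pi(u_n)\in\mathcal{M}$ with $\tilde t_n=I(u_n)^{1/\ell}\to\infty$. From $\Phi(u_n)=c+o(1)$ one gets
\[
\Phi_\lambda(\tilde u_n)\;\ge\;\frac{c}{r}\,\tilde t_n^{\,\ell(\eta,r)-\ell}\,\|\tilde u_n\|_{L^r_\eta}^r+o(1),
\]
and since $(\eta,r)$ is \emph{superscaled} (so $\ell(\eta,r)>\ell$) while the left side is bounded, this forces $\tilde u_n\to 0$ in $L^r_\eta$, hence $\tilde u_n\rightharpoonup 0$ in $E^q_b$ and, by compactness, $\tilde u_n\to 0$ in $L^p_a$. \emph{Only after} the $L^p_a$-term has been shown to be $o(1)$ does the $r\Phi_\lambda-\Phi_\lambda'$ identity yield $\tilde u_n\to 0$ in $E^q_b$, contradicting $\tilde u_n\in\mathcal{M}$. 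In short, the superscaled $H$-term must be retained and exploited before the $\lambda$-term can be controlled; it cannot be absorbed directly.
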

\begin{proof} 
Here we have $F(x,t)=\lambda|x|^{-a}|t|^{p}/p+|x|^{-\eta}H(t)$ in the definition of $\Phi$. 
Let $c\in\mathbb{R}$ and $(u_n)$ in $E^q_b$ a $(PS)_c$ sequence for $\Phi,$ namely 
$$
\Phi(u_n)=c+o(1)\quad\mbox{and}\quad \|\Phi'(u_n)\|_{(E^q_b)^*}=o(1),\quad\mbox{as}\,\,n\to\infty.
$$
To show that $(u_n)$ contains a strongly convergent subsequence, we first show that it is bounded in $E^q_b$. Arguing by contradiction, suppose that $\|u_n\|\to\infty$ for a subsequence. We use the notation \eqref{def pi} and set
$$
t_n=t_{u_n}=[I(u_n)]^{-1/\ell},\quad\tilde{u}_n=\pi(u_n)=(u_n)_{t_n}\quad\mbox{and}\quad \tilde{t}_n=t_n^{-1}=[I(u_n)]^{1/\ell}.
$$
Since  $I$ is coercive by \eqref{H10}, we have $\tilde{t}_n\to\infty$.
Once $ \mathcal{M}$ is bounded,  so it is $(\tilde{u}_n)$. Then, up to a subsequence, $\tilde{u}_n\rightharpoonup v$ for some $v\in E^q_b$.
Since $(\tilde{u}_n)_{\tilde{t}_n}=u_n$ by \eqref{H1}, and using \eqref{normscaledineq} we may estimate
\begin{equation}\label{unOt0}
    \|u_n\|=\|(\tilde{u}_n)_{\tilde{t}_n}\|=O((\tilde{t}_n)^{\ell/2}) \quad\text{as}\quad n\to\infty.
\end{equation}
  Since $\Phi(u_n)=c+o(1)$, we observe that
$$
\Phi_\lambda(u_n)=\int_{\R^N}\frac{H(u_n)}{|x|^{\eta}}\,dx+c+o(1),
$$ 
for $\Phi_\lambda$ as defined in~\eqref{functionals}. Therefore, recalling that
\begin{equation*}
 \Phi_\lambda(u_t)
 =I(u_t)-\lambda J(u_t)= t^{\,\ell}\Phi_\lambda(u),\quad \forall u\in E, \forall t\geq0,
\end{equation*}
we  have
\(
\Phi_\lambda(u_n)=(\tilde{t}_n)^{\ell}\Phi_\lambda(\tilde u_n). 
\)
So, we can write
\begin{equation}\label{eqphilambda0}
\Phi_\lambda(\tilde u_n)
= (\tilde{t}_n)^{-\ell}\int_{\R^N}\frac{H(u_n)}{|x|^{\eta}}\,dx+o(1),
\end{equation}
and using \eqref{l(eta,r)} and \eqref{AR h} we obtain
\begin{equation*}
\Phi_\lambda(\tilde u_n)
\geq \frac{c(\tilde{t}_n)^{-\ell}}{r}\int_{\R^N}\frac{|u_n|^{r}}{|x|^\eta}\,dx+o(1)= \frac{c(\tilde{t}_n)^{\ell(\eta,r)-\ell}}{r}\int_{\R^N}\frac{|\tilde{u}_n|^{r}}{|x|^\eta}\,dx+o(1).
\end{equation*}
 Since $r>q$, it is readily seen that $(\eta,r)$ is superscaled, that is, $\ell(\eta,r)>\ell$. %$r>q\geq q+(b-\eta)/\delta$ if $\eta\in[b,2)$ and $r\ge2^*_{b,q,\eta}> q+(b-\eta)/\delta$ if $\eta\in[0,b)$
Then, the boundedness of $(\Phi(\tilde{u}_n))$ and the unboundedness of $(\tilde{t}_n)$ imply that $\tilde{u}_n\to0$ in $L^r_\eta(\R^N)$.
Consequently, $\tilde{u}_n\rightharpoonup0$ in $E^q_b$. From the compactness of the embedding $E^q_b\hookrightarrow L^p_a(\R^N)$, see Corollary \ref{corollary_emb}, we have $\tilde{u}_n\to0$ in $L^p_a(\R^N)$.
On the other hand, using
$$
\Phi_\lambda'(u_t)u_t=\mathcal{A}(u_t)u_t-\lambda \mathcal{B}(u_t)u_t=t^{\,\ell}\Phi_\lambda'(u)u,\quad \forall u\in E, \forall t\geq0,
$$ 
$\Phi'(u_n)u_n=o(\|u_n\|)$ and \eqref{unOt0}, we get
\begin{eqnarray}\label{eqPhi'Ln0}
\Phi'_\lambda(\tilde u_n)\tilde{u}_n=
{(\tilde{t}_n)^{-\ell}}\int_{\mathbb{R}^N}\frac{h(u_n)u_n}{|x|^\eta}dx+o((\tilde{t}_n)^{\ell/2}).
\end{eqnarray}
Finally, multiplying \eqref{eqphilambda0} by $r$ and subtracting \eqref{eqPhi'Ln0}, due to \eqref{AR h} we conclude that
\begin{align*}
  \left(\frac{r}{2}-1\right)&\int_{\mathbb{R}^N}|\nabla\tilde u_n|^2dx+\left(\frac{r}{q}-1\right) \int_{\mathbb{R}^N}\frac{ |\tilde u_n|^q}{|x|^{b}}dx\nonumber \\
 & =\lambda\left(\frac{r}{p} -1\right)\int_{\mathbb{R}^N}\frac{|\tilde u_n|^{p}}{|x|^a}dx 
  +(\tilde{t}_n)^{-\ell}\int_{\mathbb{R}^N}\frac{\left[rH(u_n)-h(u_n)u_n\right]}{|x|^\eta}dx + o(1)\leq  o(1).
\end{align*}
This implies that $\tilde{u}_n\to0$ in $E^q_b$, which is an absurd for $\tilde{u}_n\in\mathcal{M}$. Therefore, $({u}_n)$ must be bounded in $E^q_b$. Now, we consider the potential operator $\tilde{f}:E^b_q\to (E^b_q)^*$, induced by $f$, defined by
\[
\tilde f(u)v=\lambda\int_{\mathbb{R}^N}\frac{|u|^{p-2}uv}{|x|^a}dx+\int_{\mathbb{R}^N}\frac{h(u)v}{|x|^\eta}dx. 
\] 
Since $E^q_b$ is compactly embedded in $L^p_a(\mathbb R^N)$, $L^{r_1}_{\eta}(\mathbb R^N)$ and $L^{r_2}_{\eta}(\mathbb R^N)$ for $r_1,r_2$ as in \eqref{AR h}, see Lemma \ref{embedding nonradial}, $\tilde f$ is a compact operator. If $(u_n)$ is a bounded $(PS)_c$ sequence for $\Phi$, we may use \cite[Proposition 2.3]{MePe2025} by which $(u_n)$ has a convergent subsequence, and this concludes the proof.
\end{proof}
 
Now we focus our attention on critical nonlinearities given by sums of powers of $u$ with singular weights, as in \eqref{f critical r2}. For convenience,   we write it here again:
\begin{equation}\label{nonlinearityPohozaev1}
f(x,u)=\lambda \frac{|u|^{p-2}u}{|x|^a} + \frac{|u|^{2^*_{\eta_1}-2}u}{|x|^{\eta_1}}+ \mu\frac{|u|^{r_2-2}u}{|x|^{\eta_2}},\quad \lambda,\mu\in\R.
 \end{equation}
 This means that $F(x,u)=\frac{\lambda}{p} \frac{|u|^{p}}{|x|^a}+\frac1{2^*_{\eta_1}}\frac{|u|^{2^*_{\eta_1}}}{|x|^{\eta_1}}+\frac1{r_2}\frac{|u|^{r_2}}{|x|^{\eta_2}}$ on the definition of functional $\Phi$ in \eqref{functionals}. Here, the pair $(\eta_1,2^*_{\eta_1})$ brings a critical superscaled configuration to problem~\eqref{general problem}, 
whereas $(\eta_2,r_2)$  may be treated in either the subscaled or superscaled regime, even critical. See Definition \ref{defscaled}.

\begin{lemma}\label{PS-superscaled}
Assume $N\geq3$ and \eqref{basicparametershypothesis}. For
 $f$ given by~\eqref{nonlinearityPohozaev1}, 
suppose that $\lambda\in \R$,  $\eta_1$ satisfies \eqref{eta1 small} and $\mu\in\R$. Moreover suppose that the admissible pair $(\eta_2,r_2)$ and $\mu$ satisfy one of the following assumptions:
\begin{description}
    \item[i)] $\mu\in\R$ and $(\eta_2,r_2)$ is scaled or subscaled, with $b<\eta_2$ ($r_2<2$ if $\eta_2=2$);
\item[ii)] $\mu>0$, $0\le\eta_2<\eta_1$ and $2^*_{\eta_1}\leq r_2\leq 2^*_{\eta_2}$; 
\item[iii)] $\mu>0$, $(\eta_2,r_2)$ is superscaled,  $\eta_2\in[\eta_1,2)$ (equality only if $\eta_1=\eta_2>b$) and $r_2<2^*_{\eta_2}$.
%and $r_2\ge1$ (it is automatically true)  
 Additionally, assume  that
\[
r_2>q+\frac{(2^*_{\eta_1}-q)(b-\eta_2)}{b-\eta_1}\quad\text{if}\quad\eta_2< b.
\]
\end{description}
Then every $(PS)$ sequence of the functional $\Phi$ defined in~\eqref{functionals}
is bounded in $E_b^q$.
\end{lemma}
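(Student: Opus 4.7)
The plan is to argue by contradiction. Let $(u_n)\subset E^q_b$ be a Palais--Smale sequence for $\Phi$ at level $c$, and suppose toward a contradiction that $\|u_n\|\to\infty$ along a subsequence. Following the rescaling strategy of Lemma~\ref{PS-superscaled-subcritical}, set $t_n=[I(u_n)]^{-1/\ell}$, $\tilde u_n=\pi(u_n)=(u_n)_{t_n}$ and $\tilde t_n=t_n^{-1}$. Coercivity \eqref{H10} of $I$ gives $\tilde t_n\to\infty$, $\tilde u_n\in\mathcal M$ is bounded, and up to a subsequence $\tilde u_n\rightharpoonup v$ in $E^q_b$ for some $v$; the growth estimate \eqref{normscaledineq} yields $\|u_n\|=O(\tilde t_n^{\ell/2})$, so $\Phi'(u_n)u_n=o(\tilde t_n^{\ell/2})$.

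Writing $A(u) = \int_{\R^N}|\nabla u|^2\,dx$, $B(u) = \int_{\R^N}|u|^q/|x|^b\,dx$, $D(u) = \int_{\R^N}|u|^{2^*_{\eta_1}}/|x|^{\eta_1}\,dx$ and $E(u) = \int_{\R^N}|u|^{r_2}/|x|^{\eta_2}\,dx$, and putting $\ell_1=\ell(\eta_1,2^*_{\eta_1})$, $\ell_2=\ell(\eta_2,r_2)$, the scaling laws of the weighted terms convert $\Phi(u_n)=c+o(1)$ and $\Phi'(u_n)u_n=o(\tilde t_n^{\ell/2})$, after dividing by $\tilde t_n^\ell$, into
\begin{equation*}
    \Phi_\lambda(\tilde u_n) \;=\; \frac{\tilde t_n^{\ell_1-\ell}}{2^*_{\eta_1}}\,D(\tilde u_n) \;+\; \frac{\mu\,\tilde t_n^{\ell_2-\ell}}{r_2}\,E(\tilde u_n) \;+\; o(1),
\end{equation*}
\begin{equation*}
    A(\tilde u_n)+B(\tilde u_n) - \lambda p\,J(\tilde u_n) \;=\; \tilde t_n^{\ell_1-\ell}\,D(\tilde u_n) \;+\; \mu\,\tilde t_n^{\ell_2-\ell}\,E(\tilde u_n) \;+\; o(1).
\end{equation*}
The hypothesis $\eta_1<N-q(N-2)/2$ gives $2^*_{\eta_1}>q$, hence $\ell_1>\ell$; the constraint $I(\tilde u_n)=1$ confines $A(\tilde u_n)+B(\tilde u_n)$ to $[2,q]$ via $A+B=2+B(q-2)/q$.

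In case~i), $\ell_2\leq\ell$ and $E(\tilde u_n)$ is bounded by continuous embedding, so the right side of the first identity reduces to $\tilde t_n^{\ell_1-\ell}D(\tilde u_n)/2^*_{\eta_1}+O(1)$; boundedness of the left side then forces $D(\tilde u_n)\to 0$, hence $\tilde u_n\to 0$ almost everywhere, and the compact embeddings $E^q_b\hookrightarrow L^p_a$ from Corollary~\ref{corollary_emb} and $E^q_b\hookrightarrow L^{r_2}_{\eta_2}$ (available under $\eta_2>b$ with $(\eta_2,r_2)$ in the strict interior of the admissible range) deliver $J(\tilde u_n),E(\tilde u_n)\to 0$. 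The second identity then gives $A(\tilde u_n)+B(\tilde u_n)\to 2^*_{\eta_1}>q$, contradicting $A+B\leq q$. In cases~ii) and iii), $\mu>0$ and $\ell_2>\ell$, so both non-negative terms on the right of the first identity are separately uniformly bounded; thus $D,E\to 0$, $\tilde u_n\to 0$ a.e., and $J(\tilde u_n)\to 0$ by compactness. Writing $d_n=\tilde t_n^{\ell_1-\ell}D(\tilde u_n)/2^*_{\eta_1}$ and $e_n=\mu\,\tilde t_n^{\ell_2-\ell}E(\tilde u_n)/r_2$, the first identity gives $d_n+e_n\to 1$ and the second yields
\begin{equation*}
   A(\tilde u_n)+B(\tilde u_n) \;=\; 2^*_{\eta_1}\,d_n + r_2\,e_n + o(1) \;\geq\; \min\bigl(2^*_{\eta_1},r_2\bigr) + o(1).
\end{equation*}
In case~ii), $r_2\geq 2^*_{\eta_1}$ forces $A+B\geq 2^*_{\eta_1}>q$, a contradiction; in case~iii) the superscaled condition together with the explicit lower bound on $r_2$ (imposed when $\eta_2<b$) is tailored precisely so that $\min(2^*_{\eta_1},r_2)>q$, and we again contradict $A+B\leq q$.

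The main obstacle will be case~iii): the superscaled assumption alone need not force $r_2>q$, and the technical inequality $r_2>q+(2^*_{\eta_1}-q)(b-\eta_2)/(b-\eta_1)$ for $\eta_2<b$ must be shown to deliver the sharp interpolation $\min(2^*_{\eta_1},r_2)>q$ that closes the contradiction. A careful bookkeeping of which embeddings of Lemma~\ref{Lemma Embedding with weight} are continuous versus compact near the endpoints of the admissible interval, together with the precise arithmetic between the scaling exponents $\ell,\ell_1,\ell_2$, will be the most delicate step; the remainder of the argument is a routine manipulation modeled on the proof of Lemma~\ref{PS-superscaled-subcritical}.
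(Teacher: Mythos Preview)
Your setup and the treatment of cases~i) and~ii) are correct and essentially match the paper's argument. The gap is in case~iii). You assert that the hypotheses are designed so that $\min(2^*_{\eta_1},r_2)>q$, i.e., $r_2>q$ (since $\eta_2\ge\eta_1$ gives $r_2<2^*_{\eta_2}\le 2^*_{\eta_1}$). This is false when $\eta_2>b$: the extra lower bound on $r_2$ is imposed \emph{only} for $\eta_2<b$, and for $\eta_2>b$ the superscaled condition $r_2>q+(b-\eta_2)/\delta$ is compatible with $r_2<q$. Take $N=3$, $b=1$, $q=3$ (so $\delta=1$), $\eta_1=\tfrac12$ (so $2^*_{\eta_1}=5$), $\eta_2=\tfrac32$ (so $2^*_{\eta_2}=3$), $r_2=2.8$: every hypothesis of~iii) holds, yet $r_2<q$. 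In your framework one may then have $d_n\to 0$, $e_n\to 1$, giving $A+B\to r_2=2.8\in[2,q]$, and no contradiction arises. The approach cannot be rescued by bookkeeping; what is missing is a mechanism forcing $e_n\to 0$.

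The paper supplies that mechanism via interpolation. From the first identity one extracts the rate $D(\tilde u_n)=O(\tilde t_n^{\,\ell-\ell_1})$, then constructs an auxiliary \emph{scaled} pair $(\bar\eta,\bar s)$ with $\bar\eta\in(b,2)$ so that $(\eta_2,r_2)$ lies between $(\eta_1,2^*_{\eta_1})$ and $(\bar\eta,\bar s)$ in the weighted-Lebesgue interpolation sense. The compact embedding $E^q_b\hookrightarrow L^{\bar s}_{\bar\eta}$ gives $\|\tilde u_n\|_{L^{\bar s}_{\bar\eta}}\to 0$, and the interpolation inequality combined with the rate on $D$ yields $\tilde t_n^{\,\ell_2-\ell}E(\tilde u_n)\to 0$, the powers of $\tilde t_n$ cancelling exactly; your own identities then give $d_n\to 1$ and $A+B\to 2^*_{\eta_1}>q$. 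The role of the condition $r_2>q+(2^*_{\eta_1}-q)(b-\eta_2)/(b-\eta_1)$ when $\eta_2<b$ is precisely to guarantee $\bar\eta>b$ so that this compact embedding is available --- not to force $r_2>q$.
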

\begin{proof} 
Let $c\in\mathbb{R}$ and $(u_n)$ in $E^q_b$ a $(PS)_c$ sequence for $\Phi.$ 
To show that $(u_n)$ is bounded in $E^q_b$ we start arguing as in Lemma \ref{PS-superscaled-subcritical}. We assume, by contradiction, that $\|u_n\|\to\infty$ for a subsequence, and consider
$$
t_n=t_{u_n}=[I(u_n)]^{-1/\ell},\quad\tilde{u}_n=\pi(u_n)=(u_n)_{t_n}\quad\mbox{and}\quad \tilde{t}_n=t_n^{-1}=[I(u_n)]^{1/\ell}.
$$
We have $\tilde{t}_n\to\infty$, $\tilde{u}_n\in\mathcal{M}$ and, up to a subsequence, $\tilde{u}_n\rightharpoonup v$ for some $v\in E^q_b$.
Since $(\tilde{u}_n)_{\tilde{t}_n}=u_n$, as in \eqref{eqphilambda0} we obtain
\begin{equation}\label{eqphilambda}
\Phi_\lambda(\tilde u_n)
= \frac{(\tilde{t}_n)^{\ell_1-\ell}}{2^*_{\eta_1}}\int_{\mathbb{R}^N}\frac{|\tilde{u}_n|^{2^*_{\eta_1}}}{|x|^{\eta_1}}dx
+\frac{\mu(\tilde{t}_n)^{\ell_2-\ell}}{r_2}\int_{\mathbb{R}^N}\frac{|\tilde{u}_n|^{r_2}}{|x|^{\eta_2}}dx+o(1),
\end{equation}
where $\ell_1=\ell(\eta_1,2^*_{\eta_1})$ and $\ell_2=\ell(\eta_2,r_2)$ (see \eqref{l(eta,r)}). Also, similar  to \eqref{eqPhi'Ln0}, 
we get
\begin{eqnarray}\label{eqPhi'Ln}
\Phi'_\lambda(\tilde u_n)\tilde{u}_n=
{(\tilde{t}_n)^{\ell_1-\ell}}\int_{\mathbb{R}^N}\frac{|\tilde{u}_n|^{2^*_{\eta_1}}}{|x|^{\eta_1}}dx
+{\mu(\tilde{t}_n)^{\ell_2-\ell}}\int_{\mathbb{R}^N}\frac{|\tilde{u}_n|^{r_2}}{|x|^{\eta_2}}dx+o((\tilde{t}_n)^{\ell/2}).
\end{eqnarray}
Finally, multiplying \eqref{eqphilambda} by $2^*_{\eta_1}$ and subtracting \eqref{eqPhi'Ln} we conclude that
\begin{align}\label{principal}
  \left(\frac{2^*_{\eta_1}}{2}-1\right)&\int_{\mathbb{R}^N}|\nabla\tilde u_n|^2dx+\left(\frac{2^*_{\eta_1}}{q}-1\right) \int_{\mathbb{R}^N}\frac{ |\tilde u_n|^q}{|x|^{b}}dx\nonumber \\
 & =\lambda\left(\frac{2^*_{\eta_1}}{p} -1\right)\int_{\mathbb{R}^N}\frac{|\tilde u_n|^{p}}{|x|^a}dx 
  +(\tilde{t}_n)^{\ell_2-\ell}\mu\left(\frac{2^*_{\eta_1}}{r_2} -1\right)\int_{\mathbb{R}^N}\frac{|\tilde u_n|^{r_2}}{|x|^{\eta_2}}dx + o(1).
\end{align}
By the assumption on $\eta_1$ we have $2^*_{\eta_1}>q$ and
$\ell_1>\ell$. Now, let us consider the  conditions on $\mu$ and  $(\eta_2,r_2)$.

\medskip

\noindent \textbf{[(i)]} For $(\eta_2,r_2)$ satisfying i) we have $\ell_2\le\ell$. Since the sequences $(\Phi_\lambda(\tilde{u}_n))$ and $(\|\tilde{u}_n\|_{L^{r_2}_{\eta_2}})$ are bounded and $\tilde{t}_n\to\infty$, it follows from \eqref{eqphilambda} that 
\(
\tilde{u}_n\rightarrow0
\)
in $L^{2^*_{\eta_1}}_{\eta_1}(\R^N)$. This implies that the weak limit of $\tilde{u}_n$ in $E^q_b$ is $v\equiv0$. Since  $b<\eta_2$ and $1< r_2\in(2^*_{b,q,\eta_2},2^*_{\eta_2})$, the compactness of the embeddings of  $E^q_b$ in $ L^{r_2}_{\eta_2}(\R^N)$  
and $L^p_a(\R^N)$ 
(see Lemma \ref{Lemma Embedding with weight} and Corollary \ref{corollary_emb}) give also 
\[
\|\tilde{u}_n\|_{L^{r_2}_{\eta_2}}\rightarrow0\quad\text{and}\quad \|\tilde{u}_n\|_{L^p_a}\rightarrow0.
\]
Then, as  $\ell_2\le\ell$ and $2^*_{\eta_1}>q>2$, it follows from \eqref{principal} that $\|\tilde u_n\|=o(1)$. Since $\tilde u_n\in\mathcal{M},$ this is a contradiction, and therefore the (PS) sequence $(u_n)$ is bounded in $E^q_b$. 

\medskip

\noindent \textbf{[(ii)]} In this case, since $\mu>0$ \eqref{eqphilambda} also yields $\tilde u_n\to0$ in $L^{2^*_{\eta_1}}_{\eta_1}(\R^N)$.
Consequently $\tilde u_n\rightharpoonup0$ in $E^q_b$ and so $\tilde u_n\to0$ in $L^p_a(\R^N)$. Once $2^*_{\eta_1}\leq r_2$ and $\mu>0$, by \eqref{principal} we reach $\|\tilde u_n\|\leq o(1)$, which is an absurd. 

\medskip

\noindent \textbf{[(iii)]} Here we assume $\mu>0$, $\eta_1\le\eta_2$ (equality only if $\eta_1=\eta_2>b$), and the conditions in $(\eta_2,r_2)$ ensures that $r_2<2^*_{\eta_1}$ and $\ell<\ell_2<\ell_1$. Then, from \eqref{eqphilambda} we get
\begin{equation}\label{cri O}
    \int_{\mathbb{R}^N}\frac{|\tilde{u}_n|^{2^*_{\eta_1}}}{|x|^{\eta_1}}dx=O((\tilde{t}_n)^{\ell-\ell_1}),\quad\text{as}\quad n\to\infty,
\end{equation}
which implies that  $\tilde u_n\rightharpoonup0$ in $E^q_b$. Let us denote
\begin{equation*}
\bar\eta:=\frac{(\eta_1-\eta_2)(\delta q+b)-\delta\eta_1 r_2+\delta\eta_2 2^*_{\eta_1}}{\delta(2^*_{\eta_1}-r_2)+\eta_1-\eta_2}\quad\text{and}\quad \bar s:=q+\frac{b-\bar\eta}{\delta}.
\end{equation*}
Under the conditions in this item, we get $\bar\eta\in(b,2)$. Moreover, $\ell_2>\ell$ implies that $\bar{s}<r_2$.  
Let us show that $\bar\eta\in(b,2)$. Since $\ell_1>\ell_2$ we verify that
\begin{align*}
    \bar\eta\ge\eta_2 \quad\Leftrightarrow \quad (\eta_2-\eta_1)(\ell_2-\ell)\ge0.
\end{align*}
Notice that, if $\eta_1=\eta_2>b$ we have $\bar\eta=\eta_2>b$. For $\eta_2\ge b$ and $\eta_2>\eta_1$, it holds $\bar\eta>\eta_2\geq b$. %For $\eta_2< b$ and $\eta_2\leq\eta_1$, it holds $\eta\leq\eta_2<b$. 
If $\eta_1<\eta_2<b$ we compare $\bar\eta$ directly with $b$, getting 
\begin{align*}
   \bar\eta > b \quad\Leftrightarrow \quad (\eta_2-\eta_1)(2^*_{\eta_1}-q)-(2^*_{\eta_1}-r_2)(b-\eta_1)>0
    \quad\Leftrightarrow \quad r_2>q+(2^*_{\eta_1}-q)\frac{b-\eta_2}{b-\eta_1},
\end{align*}
as we required. % $2^*_{\eta_2}>q+(2^*_{\eta_1}-q)\frac{b-\eta_2}{b-\eta_1}$, since $q<2^*_b$. 
On the other hand, 
\(\bar\eta<2\) iff $r_2<2^*_{\eta_2}$.
Thus, for these  $\bar\eta\in(b,2)$ and $\bar{s}=q+(b-\bar\eta)/\delta$ we have $2<\bar{s}\in(2^*_{b,q,\bar\eta},2^*_{\bar\eta})$. Due to the compactness of the embeddings $E^q_b\hookrightarrow L^{\bar{s}}_{\bar\eta}(\R^N)$ and $E^q_b\hookrightarrow L^p_a(\R^N)$  we obtain 
\begin{equation}\label{both convergence}
\|\tilde{u}_n\|_{L^{\bar{s}}_{\bar\eta}}\rightarrow0\quad\text{and}\quad \|\tilde{u}_n\|_{L^p_a}\rightarrow0.
\end{equation}
Besides, by the choice of $\bar\eta$ and $\bar{s}$, we see that there exists $\theta\in(0,1)$ such that
\[
\frac{1}{r_2}=\frac{\theta}{2^*_{\eta_1}}+\frac{1-\theta}{\bar{s}}\quad\text{and}\quad 
\frac{\eta_2}{r_2}=\frac{\theta\eta_1}{2^*_{\eta_1}}+\frac{(1-\theta)\bar\eta}{\bar{s}}.
\]
Using the interpolation inequality in weighted Lebesgue spaces (see e.g. \cite[Inequality (2.15)]{CKN-1984}) we get
\[
\left(\int_{\mathbb{R}^N}\frac{|\tilde{u}_n|^{r_2}}{|x|^{\eta_2}}dx\right)^\frac{1}{r_2}\leq \left(\int_{\mathbb{R}^N}\frac{|\tilde{u}_n|^{2^*_{\eta_1}}}{|x|^{\eta_1}}dx\right)^\frac{\theta}{2^*_{\eta_1}}\left(\int_{\mathbb{R}^N}\frac{|\tilde{u}_n|^{\bar{s}}}{|x|^{\bar\eta}}dx\right)^\frac{1-\theta}{\bar{s}}.
\]
So, using \eqref{cri O} we obtain 
\[
(\tilde{t}_n)^{\ell_2-\ell}\int_{\mathbb{R}^N}\frac{|\tilde{u}_n|^{r_2}}{|x|^{\eta_2}}dx
\leq C(\tilde{t}_n)^{\ell_2-\ell-(\ell_1-\ell)\frac{\theta r_2}{2^*_{\eta_1}}}\|\tilde{u}_n\|_{L^{\bar{s}}_{\bar\eta}}^{(1-\theta)r_2}=o(1),
\]
since $\ell_2-\ell=(\ell_1-\ell)(\theta r_2/2^*_{\eta_1})$. Adding this information to \eqref{both convergence}, from \eqref{principal} we reach $\tilde{u}_n\to0$ in $E^q_b$, which is an absurd because $\tilde{u}_n\in\mathcal{M}$.

Therefore, in each of the cases [(i)]-[(iii)] we have $(u_n)$ a bounded sequence.
\end{proof}

\begin{remark}
When $N\ge3$, observe that \(\max\{q, 2^*_{b,q,\eta}\} = 2^*_{b,q,\eta}\) whenever \(\eta < b\), 
whereas it equals \(q\) if \(b \le \eta < N\).  
Thus, conditions of the form \(r > q\) for an admissible pair \((\eta,r)\) are 
automatically satisfied whenever \(0 \le \eta < b\). Moreover, the condition \(\eta < N - {q(N-2)}/{2}\) is equivalent to 
\(q < 2^*_\eta\).  
Since we assume \(q < 2^*_b\), this inequality also holds automatically for every 
\(\eta < b\).
\end{remark}

The next lemma provides the thresholds where one can recover the $(PS)_c$ conditions for the critical cases. As usual, this threshold is related to the constants of the Embeddings of $D^{1,2}$ into the Lebesgue spaces. Then, let us define, for $N\ge 3$ and each $0\le\eta<2$,
\begin{equation}\label{defSeta}
S_{\eta}=\inf_{ u\in D^{1,2}(\mathbb R^N)\setminus\{0\}}\frac{\int_{\mathbb R^N}|\nabla u|^2dx}{\left(\int_{\mathbb R^N}\frac{|u|^{2^*_{\eta}}}{|x|^{\eta}}\right)^{\frac{2}{2^*_\eta}}}.
\end{equation}

\begin{lemma}\label{lemmaPS-criticalpower+superscaledpower}
Assume \eqref{basicparametershypothesis} and $N\ge 3$. Consider $f(x,u)$ as given in \eqref{f critical r2},
with $\lambda\in \R$, $\mu\ge 0$, $\eta_1$  satisfying \eqref{eta1 small} and assume that
 $(\eta_2,r_2)$ is a superscaled pair. %$\eta_2<2$
\begin{description}
    \item[(i)] If $(\eta_2,r_2)$ satisfies \eqref{eta2 subcri1} or \eqref{eta2 subcri2}, then  the functional $\Phi$ defined in~\eqref{functionals}
satisfies $(PS)_c$  for all 
\[c<\frac{2-\eta_1}{2(N-\eta_1)}S^{\frac{N-\eta_1}{2-\eta_1}}_{\eta_1}.\]
  \item[(ii)] If $(\eta_2,r_2)$ satisfies \eqref{eta2 cri}, define $\tilde S=\tilde S(\mu,\eta_1,\eta_2)>0$ such that
\(
\mu S_{\eta_2}^{-\frac{2^*_{\eta_2}}{2}}\tilde S^{\frac{2-{\eta_2}}{N-2}}+S_{\eta_1}^{-\frac{2^*_{\eta_1}}{2}}\tilde S^{\frac{2-\eta_1}{N-2}}=1.
\)
Then, the functional $\Phi$ satisfies the $(PS)_c$ condition for all 
\[c<\frac{2-\max\{\eta_1,\eta_2\}}{2(N-\max\{\eta_1,\eta_2\})}\tilde S.\]
\end{description}
\end{lemma}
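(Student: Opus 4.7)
My plan is to adapt the classical Brezis--Nirenberg concentration--compactness scheme to the present weighted setting with one or two critical weighted Sobolev--Hardy terms. First, by Lemma \ref{PS-superscaled} --- applied with its assumption (ii) when $(\eta_2,r_2)$ satisfies \eqref{eta2 subcri1}--\eqref{eta2 subcri2}, and with (iii) when \eqref{eta2 cri} holds --- every $(PS)_c$ sequence $(u_n)$ is bounded in $E^q_b$. Passing to a subsequence, $u_n \rightharpoonup u$ in $E^q_b$, a.e., strongly in $L^p_a$ by Corollary \ref{corollary_emb} and, when $r_2 < 2^*_{\eta_2}$, also in $L^{r_2}_{\eta_2}$ by Lemma \ref{Lemma Embedding with weight}; standard weak-continuity arguments then identify $u$ as a critical point of $\Phi$.

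The key preliminary I would establish is that $\Phi(u) \ge 0$ for any such critical point. Combining the testing identity $\Phi'(u)u = 0$ with the Pohozaev identity from Proposition \ref{Prop Pohozaev Identity} via the linear combination $\delta\cdot(\text{testing}) - (\text{Pohozaev})$ produces the scaling identity
\[
\ell\, \Phi_\lambda(u) \;=\; \frac{\ell_1}{2^*_{\eta_1}}\int \frac{|u|^{2^*_{\eta_1}}}{|x|^{\eta_1}}\,dx \;+\; \frac{\mu\, \ell_2}{r_2}\int \frac{|u|^{r_2}}{|x|^{\eta_2}}\,dx,
\]
with $\ell_1 := \ell(\eta_1, 2^*_{\eta_1})$ and $\ell_2 := \ell(\eta_2, r_2)$; subtracting $\frac{\ell}{2^*_{\eta_1}}\int |u|^{2^*_{\eta_1}}|x|^{-\eta_1}\,dx + \frac{\mu\ell}{r_2}\int |u|^{r_2}|x|^{-\eta_2}\,dx$ from both sides to convert the left side into $\ell\Phi(u)$ yields
\[
\Phi(u) \;=\; \frac{\ell_1 - \ell}{\ell\, 2^*_{\eta_1}}\int \frac{|u|^{2^*_{\eta_1}}}{|x|^{\eta_1}}\,dx \;+\; \frac{\mu(\ell_2 - \ell)}{\ell\, r_2}\int \frac{|u|^{r_2}}{|x|^{\eta_2}}\,dx \;\ge\; 0,
\]
because $\ell > 0$, $\ell_1 > \ell$ by \eqref{eta1 small}, $\ell_2 \ge \ell$ by the superscaled hypothesis on $(\eta_2,r_2)$, and $\mu \ge 0$.

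Setting $v_n = u_n - u \rightharpoonup 0$, a weighted Brezis--Lieb splitting of the noncompact terms (gradient, $L^q_b$, $L^{2^*_{\eta_1}}_{\eta_1}$, and in case (ii) also $L^{2^*_{\eta_2}}_{\eta_2}$) --- combined with compactness that discards the $L^p_a$ and, in case (i), the $L^{r_2}_{\eta_2}$ pieces along $v_n$ --- gives $\Phi_*(v_n) \to c - \Phi(u)$ and $\Phi_*'(v_n) v_n \to 0$, where $\Phi_*$ denotes the retained noncompact functional. Along a further subsequence, let $A := \lim \|\nabla v_n\|_2^2$, $B := \lim \int |v_n|^q|x|^{-b}\,dx$, and $C_i := \lim \int |v_n|^{2^*_{\eta_i}}|x|^{-\eta_i}\,dx$ (with $C_2 = 0$ in case (i)); then $A + B = C_1 + \mu C_2$ and $C_i \le S_{\eta_i}^{-2^*_{\eta_i}/2} A^{2^*_{\eta_i}/2}$. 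Assuming $v_n \not\to 0$ forces $A > 0$, and
\[
A \;\le\; C_1 + \mu C_2 \;\le\; S_{\eta_1}^{-2^*_{\eta_1}/2} A^{2^*_{\eta_1}/2} + \mu\, S_{\eta_2}^{-2^*_{\eta_2}/2} A^{2^*_{\eta_2}/2}
\]
forces $A \ge S_{\eta_1}^{(N-\eta_1)/(2-\eta_1)}$ in case (i), and by monotonicity of its defining relation $A \ge \tilde S$ in case (ii). Setting $\bar\eta := \eta_1$ in case (i) and $\bar\eta := \max\{\eta_1,\eta_2\}$ in case (ii), and using $q < 2^*_{\bar\eta}$,
\[
\Phi_*(v_n) \;\ge\; \frac{A}{2} + \frac{B}{q} - \frac{C_1 + \mu C_2}{2^*_{\bar\eta}} \;=\; A\Bigl(\tfrac{1}{2}-\tfrac{1}{2^*_{\bar\eta}}\Bigr) + B\Bigl(\tfrac{1}{q}-\tfrac{1}{2^*_{\bar\eta}}\Bigr) \;\ge\; \frac{2-\bar\eta}{2(N-\bar\eta)}\, A,
\]
so that $c = \Phi(u) + \lim \Phi_*(v_n) \ge \tfrac{2-\bar\eta}{2(N-\bar\eta)}\, A$ meets or exceeds the stated threshold --- contradicting the hypothesis --- and hence $v_n \to 0$ strongly in $E^q_b$.

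The main obstacle is case (ii), where two weighted critical terms may concentrate simultaneously; the implicit defining equation for $\tilde S$ is designed precisely so that the combined Sobolev--Hardy bootstrap forces $A \ge \tilde S$, which then passes uniformly through $\bar\eta$ into the energy estimate. A secondary technicality is the rigorous justification of the weighted Brezis--Lieb splitting --- in particular for the duality pairing $\int |u_n|^{q-2} u_n\, v_n\, |x|^{-b}\,dx$ --- since $E^q_b \hookrightarrow L^q_b$ is continuous but not compact, so one must argue via a.e.\ convergence and uniform integrability rather than strong convergence.
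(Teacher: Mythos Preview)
Your proof is correct and follows essentially the same route as the paper's: both invoke Lemma~\ref{PS-superscaled} for boundedness, perform a Brezis--Lieb splitting, use the Sobolev--Hardy constants to force a lower bound on the concentrating gradient energy, and rely on the Pohozaev identity at the weak limit $u$ to dispose of the residual $u$-terms. Your organization is slightly cleaner in that you isolate $\Phi(u)\ge0$ as a standalone preliminary via the scaling identity $\ell\,\Phi(u)=\frac{\ell_1-\ell}{2^*_{\eta_1}}\int\frac{|u|^{2^*_{\eta_1}}}{|x|^{\eta_1}}+\frac{\mu(\ell_2-\ell)}{r_2}\int\frac{|u|^{r_2}}{|x|^{\eta_2}}$, whereas the paper weaves the Pohozaev and Nehari identities for $u$ into the final inequality using explicitly chosen multipliers $\alpha,\beta$; the two computations are algebraically equivalent.

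One correction: your mapping of cases to the hypotheses of Lemma~\ref{PS-superscaled} is mislabeled. Condition~\eqref{eta2 subcri1} matches item~(ii) there, \eqref{eta2 subcri2} matches item~(iii), and \eqref{eta2 cri} (where $r_2=2^*_{\eta_2}$) matches item~(ii) when $\eta_2<\eta_1$ and is handled by interchanging the roles of the two critical terms otherwise --- item~(iii) cannot be invoked for \eqref{eta2 cri} since it requires $r_2<2^*_{\eta_2}$ strictly. This does not affect the validity of your argument.
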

\begin{proof}
Let \(c \in \mathbb{R}\) and let \((u_n)\) be a \((PS)_c\) sequence for \(\Phi\).
We are under the hypotheses of Lemma~\ref{PS-superscaled}. Therefore, \((u_n)\) is bounded.
Then, let $u\in E^q_b$ be such that $u_n\rightharpoonup u$ in $E^q_b$.  By this weak convergence, we have 
$$
\int_{\mathbb R^N}|\nabla(u_n-u)|^2dx=\int_{\mathbb R^N}|\nabla u_n|^2dx-\int_{\mathbb R^N}|\nabla u|^2dx+o(1)
$$
and by Brezis-Lieb Lemma
$$
\int_{\mathbb R^N}\frac{|u_n-u|^q}{|x|^b}dx=\int_{\mathbb R^N}\frac{|u_n|^q}{|x|^b}dx-\int_{\mathbb R^N}\frac{|u|^q}{|x|^b}dx+o(1)
$$
$$
\int_{\mathbb R^N}\frac{|u_n-u|^{2^*_{\eta_1}}}{|x|^{\eta_1}}dx=\int_{\mathbb R^N}\frac{|u_n|^{2^*_{\eta_1}}}{|x|^{\eta_1}}dx-\int_{\mathbb R^N}\frac{|u|^{2^*_{\eta_1}}}{|x|^{\eta_1}}dx+o(1)
$$
with analogous identity for the pair $(\eta_2,2^*_{\eta_2})$. For later use, notice that these relations imply that
\begin{align}
\label{brezisliebqb}\int_{\mathbb R^N}\frac{|u_n|^q}{|x|^b}dx\ge\int_{\mathbb R^N}\frac{|u|^q}{|x|^b}dx+o(1)\\\label{brezisliebeta1}
\int_{\mathbb R^N}\frac{|u_n|^{2^*_{\eta_2}}}{|x|^{\eta_2}}dx\ge\int_{\mathbb R^N}\frac{|u|^{2^*_{\eta_2}}}{|x|^{\eta_2}}dx+o(1)
\end{align}
Moreover, by the compactness of the embeddings \[E^q_b\hookrightarrow L^{p}_a(\mathbb R^N),\ E^q_b\hookrightarrow L^{r_2}_{\eta_2}(\mathbb R^N)\quad (\text{if }r_2<2^*_{\eta_2}),\] we have $u_n\rightarrow u$ strongly in both spaces. Notice also that  standard and well-known arguments show that $\Phi^\prime(u)v=0$ for all $v\in E^q_b$. Then, using all the convergences and Brezis-Lieb relations in the equation
$$
\Phi^\prime(u_n)u_n-\Phi^\prime(u)u=o(1)
$$
gives, in the case $r_2<2^*_{\eta_2}$, the estimate
\begin{equation}\label{PSeq1-1critical}
\int_{\mathbb R^N}|\nabla(u_n-u)|^2dx+\int_{\mathbb R^N}\frac{|u_n-u|^q}{|x|^b}dx=\int_{\mathbb R^N}\frac{|u_n-u|^{2^*_{\eta_1}}}{|x|^{\eta_1}}dx+o(1)
\end{equation}
 or, in case $r_2=2^*_{\eta_2}$, the following
\begin{equation}\label{PSeq1-2critical}
\int_{\mathbb R^N}|\nabla(u_n-u)|^2dx+\int_{\mathbb R^N}\frac{|u_n-u|^q}{|x|^b}dx=\mu\int_{\mathbb R^N}\frac{|u_n-u|^{2^*_{\eta_2}}}{|x|^{\eta_2}}dx+\int_{\mathbb R^N}\frac{|u_n-u|^{2^*_{\eta_1}}}{|x|^{\eta_1}}dx+o(1).
\end{equation}

\noindent
\textbf{Proof of (i):}  Here we have $r_2<2^*_{\eta_2}$. We start from \eqref{PSeq1-1critical}. Due to \eqref{defSeta}, we get
\[
\int_{\mathbb R^N}|\nabla(u_n-u)|^2dx\le\int_{\mathbb R^N}|\nabla(u_n-u)|^2dx+\int_{\mathbb R^N}\frac{|u_n-u|^q}{|x|^b}dx\le S_{\eta_1}^{-\frac{2^*_{\eta_1}}{2}}\left(\int_{\mathbb R^N}|\nabla(u_n-u)|^2dx\right)^{\frac{2^*_{\eta_1}}{2}}\!\!\!+o(1).
\]
Assume, by contradiction, that \((u_n)\) does not converge to \(u\) in \(E_b^q\), not even along a subsequence.  
Then, by \eqref{PSeq1-1critical} and the last inequality above, the same lack of convergence holds in \(D^{1,2}(\mathbb{R}^N)\). So we can divide the last inequality by  $\|\nabla(u_n-u)\|_{L^2}^2$ and  get
\[
\int_{\mathbb R^N}|\nabla(u_n-u)|^2dx\geq S_{\eta_1}^{\frac{N-\eta_1}{2-\eta_1}}+o(1).
\]
Then,
\begin{equation}\label{PSineq3}
\int_{\mathbb R^N}|\nabla u_n|^2dx-\int_{\mathbb R^N}|\nabla u|^2dx=\int_{\mathbb R^N}|\nabla(u_n-u)|^2dx+o(1)\ge S_{\eta_1}^{\frac{N-\eta_1}{2-\eta_1}}+o(1).
\end{equation}
Now, divide the equation $\Phi^\prime(u_n)u_n=o(1)$ by $2^*_{\eta_1}$ and subtract if from $\Phi(u_n)=c+o(1)$. This gives
\begin{align*}
c+o(1)\,\,= \,\,& \frac{2-\eta_1}{2(N-\eta_1)}\int_{\mathbb R^N}|\nabla u_n|^2dx+\left(\frac{1}{q}-\frac{1}{2^*_{\eta_1}}\right)\int_{\mathbb R^N}\frac{|u_n|^{q}}{|x|^{b}}dx\\
&-\lambda\left(\frac{1}{p}-\frac{1}{2^*_{\eta_1}}\right)\int_{\mathbb R^N}\frac{|u_n|^{p}}{|x|^{a}}dx-\mu\left(\frac{1}{r_2}-\frac{1}{2^*_{\eta_1}}\right)\int_{\mathbb R^N}\frac{|u_n|^{r_2}}{|x|^{\eta_2}}dx.
\end{align*}
Now, recall \eqref{brezisliebqb} and \eqref{PSineq3} and let $n\rightarrow\infty$ in this last equality to reach
\begin{align}\nonumber
c\,\,\ge\,\,& \frac{2-\eta_1}{2(N-\eta_1)}S_{\eta_1}^{\frac{N-\eta_1}{2-\eta_1}}+\frac{2-\eta_1}{2(N-\eta_1)}\int_{\mathbb R^N}|\nabla u|^2dx+\left(\frac{1}{q}-\frac{1}{2^*_{\eta_1}}\right)\int_{\mathbb R^N}\frac{|u|^{q}}{|x|^{b}}dx\\ \label{PSineq5}
&-\lambda\left(\frac{1}{p}-\frac{1}{2^*_{\eta_1}}\right)\int_{\mathbb R^N}\frac{|u|^{p}}{|x|^{a}}dx-\mu\left(\frac{1}{r_2}-\frac{1}{2^*_{\eta_1}}\right)\int_{\mathbb R^N}\frac{|u|^{r_2}}{|x|^{\eta_2}}dx,
\end{align}
where we have used that $u_n\rightarrow u$ both in $L^{p}_a(\mathbb R^N)$ and $L^{r_2}_{\eta_2}(\mathbb R^N)$. We need now to invoke the Pohozaev Identity given in \eqref{Pohozaev identity}. Multiply the Pohozaev Identity by $\alpha$ and $\Phi^\prime(u)u=0$ by $\beta$, where $\alpha$ and $\beta$ will be chosen later. Then, add the results in \eqref{PSineq5}. We get
\begin{align}\nonumber
c\,\,\ge\,\,&  \frac{2-\eta_1}{2(N-\eta_1)}S_{\eta_1}^{\frac{N-\eta_1}{2-\eta_1}}\!+\left[\frac{2-\eta_1}{2(N-\eta_1)}+\frac{N-2}{2}\alpha+\beta\right]\!\int_{\mathbb R^N}\!\!|\nabla u|^2dx\\\nonumber&+\left[\left(\frac{1}{q}-\frac{1}{2^*_{\eta_1}}\right)\!+\frac{N-b}{q}\alpha+\beta\right]\!\int_{\mathbb R^N}\!\!\frac{|u|^{q}}{|x|^{b}}dx-\lambda\left[\left(\frac{1}{p}-\frac{1}{2^*_{\eta_1}}\right)+\frac{N-a}{p}\alpha+\beta\right]\int_{\mathbb R^N}\frac{|u|^{p}}{|x|^{a}}dx\\\label{PSineq7}
&-\mu\left[\left(\frac{1}{r_2}-\frac{1}{2^*_{\eta_1}}\right)+\frac{N-\eta_2}{r_2}\alpha+\beta\right]\int_{\mathbb R^N}\frac{|u|^{r_2}}{|x|^{\eta_2}}dx-\left[\frac{N-2}{2}\alpha+\beta\right]\int_{\mathbb R^N}\frac{|u|^{2^*_{\eta_1}}}{|x|^{\eta_1}}.
\end{align}
Notice that we used that $(N-\eta_1)/2^*_{\eta_1}=(N-2)/2$.  Now choose
\[
    \alpha=\frac{q-2}{2(N-b)-q(N-2)}
    \qquad\text{and}\qquad
    \beta
    =-\frac{2-\eta_{1}}{2(N-\eta_{1})}
      -\frac{N-2}{2}\,\alpha.
\]
With this choice, the first three expressions in \([\cdots]\) vanish, while the last one becomes strictly negative.
It remains to verify that
\[
\left( \frac{1}{r_{2}}
      -\frac{1}{2^*_{\eta_{1}}}
\right)
+\frac{N-\eta_{2}}{r_{2}}\,\alpha
+\beta
<0,
\]
which is equivalent to
\[
    r_{2}
    >
    2\,\frac{1+\alpha (N-\eta_{2})}
          {1+\alpha (N-2)}.
\]
However, this is precisely the requirement for the pair $(\eta_2,r_2)$ to be called superscaled, since
\[
2\frac{1+\alpha(N-\eta_2)}{1+\alpha(N-2)}=q+\frac{b-\eta_2}{\delta}.
\]
Finally, we go back to \eqref{PSineq7} and see that
\[c\ge\frac{2-\eta_1}{2(N-\eta_1)}S_{\eta_1}^{\frac{N-\eta_1}{2-\eta_1}}.\]
This completes the proof for the item (i).

\medskip 

\noindent
\textbf{Proof of (ii):} 
Now, let us consider $\eta_2$ satisfying \eqref{eta1 small} and $r_2=2^*_{\eta_2}$. From \eqref{PSeq1-2critical}, using \eqref{defSeta}, we get
\[
\int_{\mathbb R^N}|\nabla(u_n-u)|^2dx\le \mu S_{\eta_2}^{-\frac{2^*_{\eta_2}}{2}}\left(\int_{\mathbb R^N}|\nabla(u_n-u)|^2dx\right)^{\frac{2^*_{\eta_2}}{2}} + S_{\eta_1}^{-\frac{2^*_{\eta_1}}{2}}\left(\int_{\mathbb R^N}|\nabla(u_n-u)|^2dx\right)^{\frac{2^*_{\eta_1}}{2}}\!\!\!+o(1).
\]
Here, analogously to the previous case, we suppose that $(u_n)$ does not have any subsequence converging to $u$ in $E^q_b$, which, due to \eqref{PSeq1-2critical}, implies that $\|\nabla(u_n-u)\|^2_{L^2}$ stays away from 0 for large values of $n$. So, we can divide the last inequality by $\|\nabla(u_n-u)\|^2_{L^2}$ and get
\[
\mu S_{\eta_2}^{-\frac{2^*_{\eta_2}}{2}}\left(\int_{\mathbb R^N}|\nabla(u_n-u)|^2dx\right)^{\frac{2-\eta_2}{N-\eta_2}} + S_{\eta_1}^{-\frac{2^*_{\eta_1}}{2}}\left(\int_{\mathbb R^N}|\nabla(u_n-u)|^2dx\right)^{\frac{2-\eta_1}{N-\eta_1}}\ge 1+o(1).
\]
Thus, the definition of $\tilde S$ in the assumptions of this lemma gives us
\begin{equation*}
\int_{\mathbb R^N}|\nabla u_n|^2dx-\int_{\mathbb R^N}|\nabla u|^2dx=\int_{\mathbb R^N}|\nabla(u_n-u)|^2dx+o(1)\ge \tilde S+o(1).
\end{equation*}
Now, without loss of generality, suppose $\eta_1\ge\eta_2$. The other case can be handled by just interchanging their roles in the following arguments. We divide the identity 
\(\Phi'(u_n)u_n = o(1)\) by \(2^*_{\eta_1}\) and subtract it from 
\(\Phi(u_n) = c + o(1)\).  
This yields
\begin{align*}
c+o(1)\,\,=\,\,&\frac{2-\eta_1}{2(N-\eta_1)}\int_{\mathbb R^N}|\nabla u_n|^2dx+\left(\frac{1}{q}-\frac{1}{2^*_{\eta_1}}\right)\int_{\mathbb R^N}\frac{|u_n|^{q}}{|x|^{b}}dx\\&-\lambda\left(\frac{1}{p}-\frac{1}{2^*_{\eta_1}}\right)\int_{\mathbb R^N}\frac{|u_n|^{p}}{|x|^{a}}dx+\mu\left(\frac{1}{2^*_{\eta_1}}-\frac{1}{2^*_{\eta_2}}\right)\int_{\mathbb R^N}\frac{|u_n|^{2^*_{\eta_2}}}{|x|^{\eta_2}}dx.
\end{align*}
By assumption, we have $q<2^*_{\eta_1}\le 2^*_{\eta_2}$ . So, we use \eqref{brezisliebqb} and \eqref{brezisliebeta1}  and letting $n\rightarrow\infty$ in the last inequality, we have
\begin{align*}
c\,\,\ge\,\,&\frac{2-\eta_1}{2(N-\eta_1)}\tilde S+\frac{2-\eta_1}{2(N-\eta_1)}\int_{\mathbb R^N}|\nabla u|^2dx+\left(\frac{1}{q}-\frac{1}{2^*_{\eta_1}}\right)\int_{\mathbb R^N}\frac{|u|^{q}}{|x|^{b}}dx\\&-\lambda\left(\frac{1}{p}-\frac{1}{2^*_{\eta_1}}\right)\int_{\mathbb R^N}\frac{|u|^{p}}{|x|^{a}}dx+\mu\left(\frac{1}{2^*_{\eta_1}}-\frac{1}{2^*_{\eta_2}}\right)\int_{\mathbb R^N}\frac{|u|^{2^*_{\eta_2}}}{|x|^{\eta_2}}dx.
\end{align*}
We now proceed exactly as in the subcritical case $r_2<2^*_{\eta_2}$. This leads to the conclusion that
$$
c\,\,\ge\,\,\frac{2-\eta_1}{2(N-\eta_1)}\tilde S,$$
and, of course, if $\eta_2>\eta_1$ we would have $\eta_2$ instead of $\eta_1$ in this last estimate. This finishes this proof.
\end{proof}
\begin{remark} \label{RemarkPSsubscaled+critical}
The \((PS)_c\) condition also holds for levels below the threshold in 
Lemma~\ref{lemmaPS-criticalpower+superscaledpower} when \((\eta_2,r_2)\) is subscaled, $b<\eta_2<2$  and 
\(\mu \le 0\).
Indeed, notice that \((PS)_c\) sequences are bounded due to Lemma~\ref{PS-superscaled}. To prove that there exist convergent subsequences, we use Corollary \ref{subscaled compact}, follow the same steps of the proof above, and recall that, in the case with only one critical pair, the argument required
\[
    \mu\left[
        \left(\frac{1}{r_2} - \frac{1}{2^*_{\eta_1}}\right)
        + \frac{N-\eta_2}{r_2}\,\alpha
        + \beta
    \right] \le 0,
\]
for the specific choices of \(\alpha\) and \(\beta\) made there.
That inequality was valid because \(\mu \ge 0\) and \((\eta_2,r_2)\) was superscaled.  
The same inequality remains valid when \(\mu \le 0\) and \((\eta_2,r_2)\) is subscaled, and thus the \((PS)_c\) condition holds in this setting as well.
\end{remark}

\subsection{Existence of solutions for subcritical superscaled problems}

Here we prove that \eqref{general problem} has a nontrivial solution for subcritical superscaled nonlinearities.   
We present distinct proofs for Theorem \ref{Th-superscaled-subcritical}, concerning $\lambda\in \sigma(\mathcal{A},\mathcal{B})$ or not. If $\lambda$ is an eigenvalue for \eqref{NE problem}, we  apply a recent abstract result based on a notion of local linking (see \cite[Theorem 2.30 and Corollary 2.31]{MePe2025}).

\begin{proof-th} \textbf{\textit{of Theorem \ref{Th-superscaled-subcritical}.}}\addcontentsline{toc}{subsubsection}{Proof of Theorem~\ref{Th-superscaled-subcritical}}
Lemma \ref{PS-superscaled-subcritical}  ensures that $\Phi$ satisfies the $(PS)$ condition. 
Let us denote  $\tilde{F},\tilde{H}:E^q_b\to\mathbb{R}$ by
$$
\tilde{F}(u)=\int_{\mathbb{R}^N}F(x,u)dx=\frac{\lambda}{p}\int_{\mathbb{R}^N}\frac{|u|^{p}}{|x|^a}dx+\int_{\mathbb{R}^N}\frac{H(u)}{|x|^\eta}dx \quad\text{and}\quad \tilde{H}(u)=\int_{\mathbb{R}^N}\frac{H(u)}{|x|^\eta}dx, 
$$
and $\tilde{h}=\tilde{H}'$. For $\ell_i=\ell(\eta_i,r_i)= r_i\delta+\eta-N$, $i=1,2$, due to \eqref{AR h}, we verify that
\begin{align*}
|\tilde{H}(u_t)|&\leq\frac{c_1t^{\,\ell_1}}{r}\int_{\mathbb{R}^N}\frac{|u|^{r_1}}{|x|^\eta}dx
+\frac{c_2t^{\,\ell_2}}{r}\int_{\mathbb{R}^N}\frac{|u|^{r_2}}{|x|^\eta}dx,\\
|\tilde{h}(u_t)v_t|&\leq c_1t^{\,\ell_1}\int_{\mathbb{R}^N}\frac{|u|^{r_1-1}|v|}{|x|^\eta}dx
+{c_2t^{\,\ell_2}}\int_{\mathbb{R}^N}\frac{|u|^{r_2-1}|v|}{|x|^\eta}dx
\leq  c_1t^{\,\ell_1}\|u\|_{L^{r_1}_\eta}^{r_1-1}\|v\|_{L^{r_1}_\eta}+ c_2t^{\,\ell_2}\|u\|_{L^{r_2}_\eta}^{r_2-1}\|v\|_{L^{r_2}_\eta}
\end{align*}
and 
\begin{align*}
\tilde{F}(u_t)\geq\tilde{H}(u_t)\ge\frac{ct^{\,\ell(\eta,r)}}{r}\int_{\mathbb{R}^N}\frac{|u|^{r}}{|x|^\eta}dx.
\end{align*}
Using the embeddings of $E^q_b$ in $L^{r_i}_\eta(\R^N)$ and  that $\ell<{\ell(\eta,r)},\ell_i$ (since we considered superscaled pairs), $i=1,2$,  we obtain  $\tilde{h}(u_t)v_t=o(t^{\,\ell})\|v\|$ and $\tilde{H}(u_t)=o(t^{\,\ell})$ as ${t\to0^+}$, uniformly in $u$ on bounded subsets, and
\begin{equation*}%\label{superscaled tilde F}
   \lim_{t\to\infty}\frac{\tilde{F}(u_t)}{t^{\,\ell}}=\infty,
\end{equation*}
uniformly in $u$ on compact subsets of $\mathcal{M}.$
Then, if $\lambda\notin\sigma(\mathcal{A},\mathcal{B})$, we can apply \cite[Theorem 2.27]{MePe2025} to guarantee the existence of a nontrivial solution for \eqref{general problem} at a positive level. In particular, since $\sigma(\mathcal{A},\mathcal{B})\subset[\lambda_1,\infty)$, it holds for any $\lambda<\lambda_1$.

In general, for $\lambda\geq\lambda_1$ (possibly an eigenvalue) we will apply \cite[Corollary 2.31]{MePe2025} to prove this result. Consider $k\in\mathbb{N}$ such that $\lambda_k\leq\lambda<\lambda_{k+1}$. Our first step is to show that  $\Phi$ has a scaled local linking near the origin in dimension $k$, which means that 
there exist two nonempty, closed, symmetric and disjoint subsets $A_0,B_0\subset\mathcal{M}$ such that
$$
i(A_0)=i(\mathcal{M}\backslash B_0)=k
$$
and $\rho>0$ satisfying
\begin{align}\label{local linking k}
\begin{cases}
    \Phi(u_t)\leq0,&\forall u\in A_0\,\,\mbox{and}\,\, 0\leq t\leq\rho,\\
      \Phi(u_t)>0,&\forall u\in B_0\,\,\mbox{and}\,\, 0< t\leq\rho.
\end{cases}
\end{align}
Consider $A_0=\tilde{\Psi}^{\lambda_k}$ and $B_0=\tilde{\Psi}_{\lambda_{k+1}}$.
Since $\lambda_k<\lambda_{k+1}$, by Theorem \ref{lambdak}\ref{lambdak(iii)} we have
  $$
i(\tilde{\Psi}^{\lambda_k})=i(\mathcal{M}\backslash\tilde{\Psi}_{\lambda_{k+1}})=k.
  $$
%Note that these subsets of $\mathcal{M}$ are nonempty and disjoint, closed as $\tilde{\Psi}$ is continuous. Moreover, these are symmetric as $\tilde{\Psi}$ is even.
Moreover, for $u\in B_0$ we have $\tilde{\Psi}(u)\ge \lambda_{k+1}$ so that $J(u)\le1/\lambda_{k+1}$ and then
\[
\Phi(u_t)=I(u_t)-\lambda J(u_t)-\tilde{H}(u_t)\geq t^{\,\ell}\left(1-\frac{\lambda}{\lambda_{k+1}}+o(1)\right)\quad\text{as}\,\,t\to0^+,
\]
uniformly on $u\in\mathcal{M}$. Thus, since $\lambda<\lambda_{k+1}$, for small $\rho>0$ we see that $\Phi$  satisfies  \eqref{local linking k} on $B_0$.
On the other hand, for $u\in A_0$ we have $\tilde{\Psi}(u)\le \lambda_{k}\le\lambda$. As $\tilde{H}\ge0$, for all $u\in A_0$ and $t\in[0,\rho]$ we get 
\[
\Phi(u_t)=I(u_t)-\lambda J(u_t)-\tilde{H}(u_t)\le  t^{\,\ell}\left(1-\frac{\lambda}{\lambda_{k}}\right)\leq0.
\]
Thus \eqref{local linking k} holds and $\Phi$ has a scaled local linking near the origin in dimension $k$.

Next we need to show that $\Phi^a$ is contractible for some $\alpha<0$. Consider
$$
\alpha<\inf_{u\in\mathcal{M}, \,0\leq t\leq1}\Phi(u_t).
$$
Denoting  $\varphi_u(t)=\Phi(u_t)$ we have
$$
\Phi^\alpha=\{u\in E^q_b:\Phi(u)\leq \alpha\}=\{u_t:u\in\mathcal{M},\,t>1\,\,\mbox{and}\,\,\varphi_u(t)\leq \alpha\}.
$$
Hence, using again \eqref{AR h}, for each $u\in\mathcal{M}$ we  get
\begin{eqnarray*}
\varphi_u(t)&=&I(u_t)-\lambda J(u_t)-
\tilde{H}(u_t)=t^{\,\ell}\left(1-{\lambda}J(u)\right)-\int_{\mathbb{R}^N}\frac{H(u_t)}{|x|^\eta}dx\\
&\leq& t^{\,\ell}\left(1-\frac{\lambda}{\tilde{\Psi}(u)}\right)-\frac{ct^{{\ell(\eta,r)}}}{r}\int_{\mathbb{R}^N}\frac{|u|^{r}}{|x|^\eta}dx\,\,\to\,\,-\infty\quad\text{as}\,\,t\to\infty,
\end{eqnarray*}
since ${\ell(\eta,r)}>\ell>0$. Notice that
\[\int_{\mathbb{R}^N}\frac{H(u_t)}{|x|^\eta}dx=\int_{\mathbb{R}^N}\frac{H(t^\delta u(tx))}{|x|^\eta}dx
=t^{\eta-N}\int_{\mathbb{R}^N}\frac{H(t^\delta u(x))}{|x|^\eta}dx
\]
and so  $\varphi_u$ is a $C^1$ function satisfying 
\begin{align*}
t\varphi'_u(t)&=\ell t^{\,\ell }\left(1-\frac{\lambda}{\tilde{\Psi}(u)}\right)-(\eta-N)t^{\eta-N}\int_{\mathbb{R}^N}\frac{H(t^\delta u)}{|x|^\eta}dx-\delta t^{\eta-N}\int_{\mathbb{R}^N}\frac{h(t^\delta u)t^{\delta}u}{|x|^\eta}dx\\
&\leq \ell t^{\,\ell }\left(1-\frac{\lambda}{\tilde{\Psi}(u)}\right)-\ell(\eta,r)t^{\eta-N}\int_{\mathbb{R}^N}\frac{H(t^\delta u)}{|x|^\eta}dx
\,\,\le \,\,\ell \varphi_u(t),
\quad\mbox{for all}\,\,t>0.
\end{align*}
So, $\varphi_u(t)<0$ yields $\varphi'_u(t)<0$. Then, the implicit function theorem provides a $C^1-$map $\psi:\mathcal{M}\to(1,\infty)$ such that
$$
\varphi_u(t)>\alpha\,\,\mbox{for}\,\, 0\leq t<\psi(u),\quad\varphi_u(\psi(u))=\alpha\quad\mbox{and}\quad\varphi_u(t)<\alpha\,\,\forall t>\psi(u),
$$
so that
$$
\Phi^\alpha=\{u_t:u\in\mathcal{M},\,t\geq \psi(u)\}.
$$
Finally, for $t_u$ and $\tilde{u}=\pi(u)=u_{t_u}$ as in \eqref{def pi}, %$u\mapsto t_u=(I(u))^{-1/\ell}$ is  continuous  on $E^q_b\backslash\{0\}$, 
we consider $T:(E\backslash\{0\})\times [0,1] \to E\backslash\{0\}$ defined as
$$
T(u,\theta)=
\begin{cases}
\tilde{u}_{\theta\psi(\tilde{u})+(1-\theta)t^{-1}_u}& \mbox{if}\,\,u\notin\Phi^\alpha\\
u& \mbox{if}\,\,u\in\Phi^\alpha,
\end{cases}
$$
which is a deformation retraction onto $\Phi^\alpha$. Since $E^q_b\backslash\{0\}$ is a contractible set, we conclude that $\Phi^\alpha$ is also contractible. At this point, we can apply \cite[Corollary 2.31]{MePe2025} and conclude this proof.
\end{proof-th}

Next we are going to prove our multiplicity result for subcritical problems.

\begin{proof-th} \textbf{\textit{of Theorem \ref{Th-superscaled-subcritical-multi}.}}\addcontentsline{toc}{subsubsection}{Proof of Theorem~\ref{Th-superscaled-subcritical-multi}}
To prove this result apply \cite[Propositions 3.42 and 3.44]{Perera-book}.
From Lemma \ref{PS-superscaled-subcritical}  we know that $\Phi$ satisfies the $(PS)_c$ condition for all $c\in\R$, in particular for $c\in(0,\infty)$. Since we are assuming that $h$ is an odd function we have $\Phi$ an even functional. 
We set 
 $$
 B_0:=\tilde{\Psi}_{\lambda_{k_0}}\quad\mbox{for}\quad k_0:=\min\{k\in\mathbb{N}: \lambda<\lambda_k\}.
$$ 
 If $k_0=1$, by Theorem \ref{lambdak}\ref{lambdak(i)} we have
$$
B_0=\{u\in\mathcal{M}:\tilde{\Psi}(u)\geq\lambda_1\}=\mathcal{M}.
$$
 Hence $i(\mathcal{M}\backslash B_0)=i(\emptyset)=0= k_0-1$. If $k_0\ge2$, it holds that $\lambda_{k_0-1}\leq\lambda<\lambda_{k_0}$, hence Theorem \ref{lambdak}\ref{lambdak(iii)} yields 
$$
i(\mathcal{M}\backslash B_0)=
i(\mathcal{M}\backslash\tilde{\Psi}_{\lambda_{k_0}})=k_0-1.
$$
For $k\geq k_0$ in $\mathbb{N}$ we consider $W_k$ a subspace of $E^q_b$ with dimension $k$ and denote $S^{k-1}$ the unit sphere of $W_k$, which is a compact set in $E^q_b$ and satisfies $i(S^{k-1})=k$ (see \cite{fadel-rabi}).
For $R,\rho>0$ to be chosen later, we set
\begin{eqnarray*}
    B=\{ u_\rho:u\in B_0\},\quad A=\{Ru:u\in S^{k-1}\}\quad\mbox{and}\quad X=\{tu:u\in A,\,0\leq t\leq 1\}.
\end{eqnarray*}
We will verify that we may choose $R>\rho>0$ such that $A\subset E^q_b\setminus I^{-1}[0,\rho]$ and
\begin{equation}\label{claim}
\sup_{v\in A}\Phi(v)\leq 0<\inf_{v\in B}\Phi(v)\quad\mbox{and}
\quad\sup_{v\in X}\Phi(v)<\infty.
\end{equation}
From \eqref{AR h} we have $|H(s)|\leq c_1|s|^{r_1}+c_2|s|^{r_2}$ for all $s\in\R$, for superscaled pairs $(\eta,r_i)$. Then, 
\[
\tilde{H}(u_t)=\int_{\mathbb{R}^N}\frac{H(u_t)}{|x|^{\eta}}dx=o(t^{\,\ell}),\quad\text{as}\,\,\, t\to0^+,
\]
uniformly in $u\in B_0$. See the proof of Theorem \ref{Th-superscaled-subcritical} for details. Since $(J(u))^{-1}=\tilde{\Psi}(u)\geq\lambda_{k_0}$ in $B_0$ we get
\begin{align*}%\label{Phi subcri t small}
    \Phi(u_t)&={t^{\,\ell}}I(u)-\lambda t^{\,\ell} J(u)-\tilde{H}(u_t)\ge t^{\,\ell}\left(1-\frac{\lambda^+}{\lambda_{k_0}}+o(1)\right),\quad\text{as}\,\,\, t\to0^+ .
\end{align*}
Once $\lambda^+=\max\{\lambda,0\}<\lambda_{k_0}$, for $t=\rho>0$ sufficiently small it holds
\begin{equation}\label{inf in B rho subcri}
    \inf_{v\in B}\Phi(v)= \inf_{u\in B_0}\Phi(u_\rho)>0.
\end{equation}
On the other hand, due to \eqref{AR h} we have $rH(s)\geq c|s|^r$ for $s\in\R$. So 
\begin{align}\label{Phi subcri}
    \Phi(tu)%&=\frac{t^{2}}2\int_{\mathbb{R}^N}|\nabla u|^{2}dx+\frac{t^{q}}{q}\int_{\mathbb{R}^N}\frac{|u|^{q}}{|x|^{b}}dx-\frac{\lambda t^{p}}{p}\int_{\mathbb{R}^N}\frac{|u|^{p}}{|x|^{a}}dx-\int_{\mathbb{R}^N}\frac{H(tu)}{|x|^{\eta}}dx\nonumber\\
    &\leq\frac{t^{2}}2\int_{\mathbb{R}^N}|\nabla u|^{2}dx+\frac{t^{q}}{q}\int_{\mathbb{R}^N}\frac{|u|^{q}}{|x|^{b}}dx-\frac{\lambda t^{p}}{p}\int_{\mathbb{R}^N}\frac{|u|^{p}}{|x|^{a}}dx-\frac{ct^{r}}{r}\int_{\mathbb{R}^N}\frac{|u|^r}{|x|^{\eta}}dx .
\end{align}
The compactness of  $S^{k-1}$, and its boundedness in $L^{p}_a(\R^N)$ and in  $L^{r}_{\eta}(\R^N)$, ensures that 
$$
\frac{c}{r}\int_{\mathbb{R}^N}\frac{|u|^{r}}{|x|^{\eta}}dx\geq c_0\quad\text{and}\quad\frac{|\lambda|}{p}\int_{\mathbb{R}^N}\frac{|u|^{p}}{|x|^a}dx\leq \tilde{c}_1,\quad\forall  u\in S^{k-1},
$$
for some constants $c_0,\tilde c_1>0$. Recalling that $r>q>p>2$, from \eqref{Phi subcri} we obtain
 $$
\Phi(tu)\leq 
\frac{t^{2}}2+\frac{t^{q}}q+\tilde{c}_1t^{p}-c_0t^{r}
\leq0, \quad\forall u\in S^{k-1}, 
$$
whether $t=R$ is large enough, yielding also $A=RS^{k-1}\subset E^q_b\setminus I^{-1}[0,\rho]$. Hence, we get
\begin{equation}\label{sup in A0 subcri}
    \sup_{v\in A}\Phi(v)=\sup_{u\in S^{k-1}}\Phi(Ru)\leq0.
\end{equation}
Moreover, 
$$
\sup_{v\in X}\Phi(v)=
\sup_{t\in[0,1],\,u\in A} \Phi(tu)\leq \max_{t\ge0}\left( 
\frac{t^{2}}2+\frac{t^{q}}q+\tilde{c}_1t^{p}-c_0t^{r}\right)<\infty.
$$
Joining this estimate with \eqref{inf in B rho subcri} and \eqref{sup in A0 subcri} we get  \eqref{claim}. 
Therefore, using \cite[Propositions 3.42 and 3.44]{Perera-book} we obtain a sequence of positive critical values $(c^*_k)$, ${k\geq k_0}$, such that $c^*_k\nearrow\infty$. 
% Consider $\mathcal{M}_\hro$ in  \cite[Propositions 3.44]{Perera-book} for the definition of the pseudo index.
\end{proof-th}

\subsection{Existence and multiplicity for critical problems}

In the proofs of our next results, we will need some technical  estimates for the norm of \(u\) in the weighted Lebesgue spaces \(L^r_\eta\). These are the subject of the next lemmas.

\begin{lemma}\label{lemma-general-interpolation}
Suppose that $(\eta,r)$ is an admissible pair. Then, there exists an admissible pair $(\tilde\eta,\tilde r)$, with $b<\tilde\eta<2$, and $0<\theta<1$ such that the following interpolation inequality holds 
\begin{equation}\label{general-interpolation}
\left(\int_{\mathbb{R}^N} \frac{|u|^p}{|x|^a}\,dx\right)^\frac{1}{p}
\le
\left(\int_{\mathbb{R}^N} \frac{|u|^{r}}{|x|^{\eta}}\,dx\right)^\frac{\theta}{r}
\left(\int_{\mathbb{R}^N} \frac{|u|^{\tilde r}}{|x|^{\tilde\eta}}\,dx\right)^\frac{1-\theta}{\tilde r}.
\end{equation}
Moreover, the pair $(\tilde\eta,\tilde r)$ can be chosen so that it is scaled (respectively subscaled, superscaled) whenever $(\eta, r)$ is scaled (respectively superscaled, subscaled).  
\end{lemma}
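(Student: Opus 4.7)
The inequality has the exact shape of a Hölder interpolation between two weighted Lebesgue norms, so the natural strategy is to reduce it to the standard log-convexity inequality in weighted $L^{p}$ spaces (as in \cite[(2.15)]{CKN-1984}, already invoked in the proof of Lemma \ref{Lemma Embedding with weight}) and then produce the auxiliary pair $(\tilde\eta,\tilde r)$ by a perturbation argument centered at the scaled pair $(a,p)$.

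First I would write $\frac{|u|^{p}}{|x|^{a}} = \bigl(\frac{|u|^{r}}{|x|^{\eta}}\bigr)^{p\theta/r}\,\bigl(\frac{|u|^{\tilde r}}{|x|^{\tilde\eta}}\bigr)^{p(1-\theta)/\tilde r}$ and apply Hölder with the conjugate exponents $r/(p\theta)$ and $\tilde r/(p(1-\theta))$. Checking that the exponents of $|u|$ and $|x|$ balance correctly forces the two scalar constraints
\[
\frac{1}{p}=\frac{\theta}{r}+\frac{1-\theta}{\tilde r},\qquad \frac{a}{p}=\frac{\theta\,\eta}{r}+\frac{(1-\theta)\,\tilde\eta}{\tilde r},
\]
and under these \eqref{general-interpolation} follows after taking a $p$-th root. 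Solving them for $\tilde r$ and $\tilde\eta$ as functions of $\theta\in(0,1)$ yields
\[
\tilde r=\frac{rp(1-\theta)}{r-\theta p},\qquad \tilde\eta=\frac{ra-\theta p\,\eta}{r-\theta p}.
\]
As $\theta\to 0^{+}$ one has $(\tilde\eta,\tilde r)\to(a,p)$, which is admissible by Corollary \ref{corollary_emb} and satisfies $b<a<2$. Hence, by continuity of the admissibility thresholds $2^{*}_{b,q,\tilde\eta}$ and $2^{*}_{\tilde\eta}$ at $\tilde\eta=a$, the pair $(\tilde\eta,\tilde r)$ remains admissible with $b<\tilde\eta<2$ and $\tilde r>1$ for every sufficiently small $\theta>0$.

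For the scaling classification, a direct algebraic computation using the formulas above produces the key identity
\[
\ell(\tilde\eta,\tilde r)-\ell\;=\;\frac{\theta p}{r-\theta p}\,\bigl[\,\ell-\ell(\eta,r)\,\bigr].
\]
Choosing $\theta$ small enough that $r-\theta p>0$, the coefficient in front is strictly positive, so $\ell(\tilde\eta,\tilde r)-\ell$ has sign opposite to $\ell(\eta,r)-\ell$. This translates immediately into the three claims: scaled is preserved, subscaled becomes superscaled, and superscaled becomes subscaled. The main obstacle is to guarantee that a single choice of small $\theta$ simultaneously delivers admissibility of $(\tilde\eta,\tilde r)$, the strict bound $b<\tilde\eta<2$, and the correct scaling type; this is handled uniformly by the continuity argument above, since $(a,p)$ lies strictly in the interior of the admissible region in all three regimes.
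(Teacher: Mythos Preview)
Your proof is correct and follows the same strategy as the paper: both derive the two balance constraints from the weighted H\"older interpolation and then perturb around the scaled pair $(a,p)$, using continuity of the admissibility thresholds to keep $(\tilde\eta,\tilde r)$ admissible with $b<\tilde\eta<2$. The only difference is the choice of free parameter---you send $\theta\to 0^{+}$ and solve for $(\tilde\eta,\tilde r)$, whereas the paper sends $\tilde\eta\to a$ and solves for $(\theta,\tilde r)$---which yields a slightly cleaner form of the key sign identity.
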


\begin{proof} 
We use interpolation inequality with weights.  If $\eta=a$ and $r\neq p$ the conclusion is immediate: choose
$\tilde\eta=a$, $\max\{2^*_{b,q,a},1\}<\tilde r<2^*_a$ and  $0<\theta<1$ such that
\[
\frac{1}{p}=\frac{\theta}{r}+\frac{1-\theta}{\tilde r}
\]
and use the interpolation inequality in the Lebesgue spaces with the same weight, $|x|^{-a}$, to get \eqref{general-interpolation}.  Thus, consider $\eta\neq a$.  Here we must find an admissible pair $(\tilde\eta,\tilde r)$ and $0<\theta<1$ satisfying
\begin{equation}\label{interpolationbalance:scaled=subscaled+scaled}
\frac{1}{p}
=\frac{\theta}{r}+\frac{1-\theta}{\tilde r}
\qquad\text{and}\qquad
\frac{a}{p}
=\frac{\theta\eta}{r}
+\frac{(1-\theta)\tilde\eta}{\tilde r},
\end{equation}
so that using the interpolation inequality with distinct weights (see e.g. \cite[Inequality~(2.15)]{CKN-1984}), we get \eqref{general-interpolation}.
The identities in \eqref{interpolationbalance:scaled=subscaled+scaled} are equivalent to
\begin{equation}\label{interpolation-identities-rewritten}
\theta(\tilde\eta)
=
\frac{a-\tilde\eta}{p}\,\frac{r}{\eta-\tilde\eta}
\qquad\text{and}\qquad
\frac{1}{\tilde r}
=\frac{1}{p}
+\frac{\theta(\tilde\eta)}{1-\theta(\tilde\eta)}
\biggl(\frac{1}{p}-\frac{1}{r}\biggr).
\end{equation}
We distinguish two situations:

\medskip

\noindent
{Case (i): $\eta>a$.}
Pick $\tilde\eta<a$, and so, $\eta-\tilde\eta> \eta-a>0$.  
Then letting $\tilde\eta\nearrow a$ we see from \eqref{interpolation-identities-rewritten} that
\[
\theta(\tilde\eta)\searrow 0,
\qquad
2^*_{b,q,\tilde\eta}\searrow 2^*_{b,q,a},\qquad 2^*_{\tilde\eta}\searrow 2^*_{a}
\qquad
\tilde r\to p.
\]
Since $2^*_{b,q,a}<p<2^*_a$, we may choose $\tilde\eta$ sufficiently close to $a\in(b,2)$ so that
\[
b<\tilde\eta<2,\qquad 0<\theta(\tilde\eta)<1,\qquad \max\{1,2^*_{b,q,\tilde\eta}\}<\tilde r<2^*_{\tilde\eta}.
\]

\noindent
{Case (ii): $\eta<a$.}
The argument is completely analogous: now select $\tilde\eta>a$, 
sufficiently close to $a$ so that $b<\tilde\eta<2$,
$0<\theta(\tilde\eta)<1$ and  $ \max\{1,2^*_{b,q,\tilde\eta}\}<\tilde r<2^*_{\tilde\eta}$.

\medskip

 We need to locate the pair $(\tilde\eta,\tilde r)$ with respect to $(a,p)$. From \eqref{l(eta,r)} and \eqref{interpolation-identities-rewritten}, we get that
\begin{equation*}
{\ell-\ell(\tilde\eta,\tilde r)}
=
\frac{{\tilde r}}{r}\,\frac{\theta(\tilde\eta)}
     {1-\theta(\tilde\eta)}\,\big(\ell(\eta,r)-\ell\,\big).
\end{equation*}
 This means that the sign of the left hand side is ruled by the sign of $\ell(\eta,r)-\ell$, which gives exactly the conclusion we need: $(\tilde\eta,\tilde r)$ is scaled (subscaled) (superscaled) whenever $(\eta,r)$ is scaled (superscaled) (subscaled).
\end{proof}

    Using the interpolation inequality from Lemma \ref{lemma-general-interpolation} we get some uniform estimate in $L^r_\eta(\R^N)$ for functions in the set levels $\tilde{\Psi}^{\lambda}$.
 \begin{lemma}\label{interpolation estimates}
Suppose $(\eta,r)$ is an admissible pair. Then, for each
$\lambda\geq\lambda_1$ there exists a positive constant $c$, which depends on $\lambda,\eta,r$ and other fixed parameters, such that
\begin{align}\label{II general}
\frac{1}{r}\int_{\mathbb{R}^N}\frac{|u|^{r}}{|x|^\eta}dx\geq c,\quad\forall u\in\tilde{\Psi}^{\lambda}.
\end{align}
 \end{lemma}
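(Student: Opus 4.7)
The plan is to combine the interpolation inequality of Lemma~\ref{lemma-general-interpolation} with two simple observations: that the set $\tilde\Psi^\lambda$ forces a lower bound on $\int |u|^p/|x|^a\,dx$, and that every $u\in\mathcal M$ lies in a bounded set of $E^q_b$, hence in a bounded set of every weighted Lebesgue space arising from the embeddings of Lemma~\ref{Lemma Embedding with weight}.

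First, I would note the trivial case $(\eta,r)=(a,p)$: any $u\in\tilde\Psi^\lambda$ satisfies $J(u)\ge 1/\lambda$, which is already $\frac1p\int|u|^p/|x|^a\,dx\ge 1/\lambda$, and the statement follows with $c=1/\lambda$. Assume from now on that $(\eta,r)\ne(a,p)$. By Lemma~\ref{lemma-general-interpolation}, pick an admissible pair $(\tilde\eta,\tilde r)$ with $b<\tilde\eta<2$ and $\theta\in(0,1)$ such that
\[
\left(\int_{\R^N}\frac{|u|^p}{|x|^a}\,dx\right)^{1/p}\le \left(\int_{\R^N}\frac{|u|^r}{|x|^\eta}\,dx\right)^{\theta/r}\left(\int_{\R^N}\frac{|u|^{\tilde r}}{|x|^{\tilde\eta}}\,dx\right)^{(1-\theta)/\tilde r}.
\]

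Second, since $I(u)=1$ on $\mathcal M$, the norm $\|u\|$ in $E^q_b$ is bounded by a universal constant. Because $(\tilde\eta,\tilde r)$ is admissible with $b<\tilde\eta<2$, the continuous embedding $E^q_b\hookrightarrow L^{\tilde r}_{\tilde\eta}(\R^N)$ from Lemma~\ref{Lemma Embedding with weight} yields a constant $C_1>0$, depending only on the fixed parameters, such that
\[
\int_{\R^N}\frac{|u|^{\tilde r}}{|x|^{\tilde\eta}}\,dx\le C_1,\qquad\forall\,u\in\mathcal M.
\]
On the other hand, every $u\in\tilde\Psi^\lambda$ satisfies $J(u)\ge 1/\lambda$, so $\int_{\R^N}|u|^p/|x|^a\,dx\ge p/\lambda$. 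Inserting these two bounds into the interpolation inequality above and solving for the $L^r_\eta$-term, I obtain
\[
\int_{\R^N}\frac{|u|^r}{|x|^\eta}\,dx\;\ge\;\left(\frac{p}{\lambda}\right)^{r/(p\theta)}C_1^{-(1-\theta)r/(\theta\tilde r)}=:rc,
\]
which is the desired uniform lower bound.

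The argument is essentially routine once Lemma~\ref{lemma-general-interpolation} is available; the only subtle point is ensuring that the interpolating pair $(\tilde\eta,\tilde r)$ lies in the continuous embedding range for $E^q_b$, which is exactly the constraint $b<\tilde\eta<2$ guaranteed by Lemma~\ref{lemma-general-interpolation}. No compactness is needed at this step, only the continuous embedding.
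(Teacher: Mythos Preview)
Your proof is correct and follows essentially the same route as the paper: handle $(\eta,r)=(a,p)$ directly via $J(u)\ge 1/\lambda$, then for general admissible $(\eta,r)$ apply the interpolation inequality of Lemma~\ref{lemma-general-interpolation}, bound the auxiliary $L^{\tilde r}_{\tilde\eta}$-norm using the boundedness of $\mathcal M$ together with the continuous embedding $E^q_b\hookrightarrow L^{\tilde r}_{\tilde\eta}$, and solve for the $L^r_\eta$-term. The paper's argument is identical in structure and content.
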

\begin{proof}
By the definition of $\tilde{\Psi}$, given in \eqref{def Psi}, we have  $\tilde{\Psi}(u)\leq \lambda$ for all $\tilde{\Psi}^{\lambda}$. This implies that \eqref{II general} holds for $(\eta,r)=(a,p)$ with $c=1/\lambda$. 
Giving another admissible pair $(\eta,r)$, from Lemma \ref{lemma-general-interpolation} we get $(\tilde\eta,\tilde r)$, also admissible, and $\theta\in(0,1)$ satisfying \eqref{general-interpolation}. Due to the the embedding $E^q_b\hookrightarrow L^{\tilde r}_{\tilde\eta}(\R^N)$ we obtain
\begin{align*}
\frac{1}{p}\int_{\mathbb{R}^N}\frac{|u|^p}{|x|^a}dx\leq \frac{1}{p}\left(\int_{\mathbb{R}^N}\frac{|u|^r}{|x|^\eta}dx\right)^\frac{p\theta}{r}\left(\int_{\mathbb{R}^N}\frac{|u|^{\tilde{r}}}{|x|^{\tilde\eta}}dx\right)^\frac{p(1-\theta)}{\tilde{r}}
\leq C\left(\int_{\mathbb{R}^N}\frac{|u|^{r}}{|x|^\eta}dx\right)^\frac{\theta}{r},
\end{align*}
for some positive constant $C=C(N,b,q,p,\tilde\eta,\tilde r,\theta)$. Since  \eqref{II general} holds for $(\eta,r)=(a,p)$, we conclude that it occurs for any admissible pair $(\eta,r)$.
\end{proof}

For reader's convenience we recall the definition of pseudo-index. For any $\rho > 0$, we define
\begin{equation*}%\label{Mrho}
\mathcal{M}_\rho = \{ u \in E^q_b : I(u) = \rho^{\ell} \} = \{ u_\rho : u \in \mathcal{M} \}.
\end{equation*}
Let $\Gamma$ be the group of odd homeomorphisms of $E^q_b$ that act as the identity outside the set $\Phi^{-1}(0, c^*)$, for some $c^*>0$ such that $\Phi$ satisfies the $(\mathrm{PS})_c$ condition for all $c \in (0, c^*)$. Let $\mathcal{A}^*$ be the family of symmetric subsets of $E^q_b$.
For any $M \in \mathcal{A}^*$, the pseudo-index $i^*(M)$ with respect to $i$, $\mathcal{M}_\rho$, and $\Gamma$ (see Benci \cite{V. Benci}) is defined 
\[
i^*(M) = \min_{\gamma \in \Gamma} i(\gamma(M) \cap \mathcal{M}_\rho).
\]

The proofs of Theorem \ref{th cri-super-mult} and Theorem \ref{th cri any lambda} will be based on the following
\begin{proposition}[\cite{MePe2025}, Theorem 2.33]\label{Theorem 2.33 MP}
Let $A_0$ and $B_0$ be symmetric subsets of $\mathcal{M}$ such that $A_0$ is compact, $B_0$ is closed, and
\begin{equation*}
i(A_0) \geq k + m - 1, \qquad i(\mathcal{M} \setminus B_0) \leq k - 1
\end{equation*}
for some $k, m \geq 1$. Let $R > \rho > 0$ and let
\[
X = \{u_t : u \in A_0,\, 0 \leq t \leq R \}, \quad
A = \{u_R : u \in A_0 \}, \quad
B = \{u_\rho : u \in B_0 \}.
\]
Assume that
\begin{equation*}
\sup_{u \in A} \Phi(u) \leq 0 < \inf_{u \in B} \Phi(u), \qquad
\sup_{u \in X} \Phi(u) < c^*.
\end{equation*}
For $j = k, \ldots, k + m - 1$, let
\[
\mathcal{A}^*_j = \{M \in \mathcal{A}^* : M \text{ is compact and } i^*(M) \geq j \}
\]
and set
\[
c^*_j := \inf_{M \in \mathcal{A}^*_j} \max_{u \in M} \Phi(u).
\]
Then $0 < c^*_k \leq \cdots \leq c^*_{k+m-1} < c^*$, each $c^*_j$ is a critical value of $\Phi$, and $\Phi$ has $m$ distinct pairs of associated critical points.
\end{proposition}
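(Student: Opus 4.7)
The plan is to follow Benci's classical pseudo-index minimax scheme, adapted to the scaled setting of \cite{MePe2025}. The argument decomposes into three parts: establish the lower bound $c^*_k>0$ via a topological intersection with $B$; establish the upper bound $c^*_{k+m-1}<c^*$ by exhibiting $X$ itself as an admissible competitor with sufficient pseudo-index; and deduce criticality and multiplicity via a standard equivariant deformation argument.

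For the lower bound, I would first observe that the scaling $u\mapsto u_\rho$ is, by \eqref{H1}--\eqref{H3}, an odd homeomorphism from $\mathcal{M}$ onto $\mathcal{M}_\rho$, so that $i(\mathcal{M}_\rho\setminus B)=i(\mathcal{M}\setminus B_0)\le k-1$. For any $M\in\mathcal{A}^*_j$ with $j\ge k$, taking $\gamma=\mathrm{id}\in\Gamma$ in the definition of $i^*$ gives $i(M\cap\mathcal{M}_\rho)\ge j\ge k$. The monotonicity of the cohomological index then forces $M\cap B\neq\emptyset$, since otherwise $M\cap\mathcal{M}_\rho\subset\mathcal{M}_\rho\setminus B$ would give the contradiction $k\le i(M\cap\mathcal{M}_\rho)\le k-1$. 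Hence $\max_M\Phi\ge\inf_B\Phi>0$ and so $c^*_j\ge\inf_B\Phi>0$ for all $j$ in the relevant range.

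For the upper bound, I would show that $i^*(X)\ge i(A_0)=k+m-1$. Given $\gamma\in\Gamma$, consider the continuous map $\Theta(u,t)=I(\gamma(u_t))$ on $A_0\times[0,R]$. At $t=0$, \eqref{H3} gives $u_0=0$ and oddness yields $\gamma(0)=0$, so $\Theta(u,0)=0$; at $t=R$, the fact that $\sup_A\Phi\le 0$ together with the support restriction on $\Gamma$ implies $\gamma(u_R)=u_R$, and using $I(u_t)=t^{\,\ell}I(u)$ with $I(u)=1$ on $\mathcal{M}$ one gets $\Theta(u,R)=R^{\,\ell}>\rho^{\,\ell}$. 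A continuous odd selection (choosing, say, $t(u)$ as the infimum of the zero set of $\Theta(u,\cdot)-\rho^{\,\ell}$, which is lower semi-continuous and odd by equivariance of $\gamma$) then yields an odd continuous map $h\colon A_0\to\gamma(X)\cap\mathcal{M}_\rho$, $h(u)=\gamma(u_{t(u)})$. Monotonicity of the index under odd maps gives $i(\gamma(X)\cap\mathcal{M}_\rho)\ge i(A_0)=k+m-1$, so $i^*(X)\ge k+m-1$. Since $X$ is compact (as the continuous image of $A_0\times[0,R]$) and $\max_X\Phi<c^*$, this delivers $c^*_{k+m-1}\le\max_X\Phi<c^*$.

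Finally, to show that each $c^*_j$ is critical and that coincidences yield additional critical pairs, I would invoke the standard equivariant deformation lemma, available because $\Phi$ satisfies $(PS)_c$ on the entire range $(0,c^*)$. If some $c^*_j$ were regular, a homeomorphism $\eta\in\Gamma$ would push a near-minimizer of $c^*_j$ strictly below $c^*_j$, contradicting the $\Gamma$-invariance of $\mathcal{A}^*_j$. If $c^*_j=\cdots=c^*_{j+\ell}$ for some $\ell\ge 1$, a Krasnoselskii-type argument combined with the deformation lemma shows that the critical set at this common level must have cohomological index at least $\ell+1$, so it contains at least $\ell+1$ antipodal pairs, yielding the required $m$ distinct pairs in total. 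The main technical subtlety, as in classical pseudo-index proofs, lies in the continuous and odd intermediate-value selection defining $h$; once this is handled, the remainder of the argument is a direct transcription of Benci's scheme to the scaled framework.
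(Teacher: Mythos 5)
This proposition is imported from \cite{MePe2025} (Theorem~2.33) without proof, so there is no in-paper argument for you to match; what follows assesses your proposal on its own merits against the standard Benci pseudo-index scheme that the cited theorem follows.

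Your lower bound step is correct: with $\mathrm{id}\in\Gamma$, any $M\in\mathcal{A}^*_j$ with $j\ge k$ satisfies $i(M\cap\mathcal{M}_\rho)\ge k>k-1\ge i(\mathcal{M}_\rho\setminus B)$, so $M\cap B\ne\emptyset$ by monotonicity of $i$, and $c^*_j\ge\inf_B\Phi>0$. The deformation/multiplicity step at the end is also the standard one and is fine given $(PS)_c$ on $(0,c^*)$.

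The genuine gap is in the upper bound, precisely where you yourself flag a ``technical subtlety.'' You want an odd \emph{continuous} map $h\colon A_0\to\gamma(X)\cap\mathcal{M}_\rho$ in order to conclude $i(\gamma(X)\cap\mathcal{M}_\rho)\ge i(A_0)$ by monotonicity. Taking $t(u)$ to be, say, the infimum of $\{t:\Theta(u,t)=\rho^{\ell}\}$ produces a function that is in general only semicontinuous, because $t\mapsto\Theta(u,t)=I(\gamma(u_t))$ has no monotonicity ($\gamma$ is an arbitrary element of $\Gamma$); a merely semicontinuous selection does not yield a continuous odd map, and no general selection theorem repairs this. The correct device, and the reason the Fadell--Rabinowitz index is used rather than the Krasnoselskii genus, is the \emph{piercing property} of $i$: for the continuous odd map $\psi(u,t)=\gamma(u_t)$ on $A_0\times[0,R]$, with $\psi(A_0\times\{0\})=\{0\}\subset I^{-1}[0,\rho^{\ell})$ and $\psi(A_0\times\{R\})=A\subset I^{-1}(\rho^{\ell},\infty)$ (using $\gamma|_A=\mathrm{id}$, since $\Phi\le 0$ on $A$), the barrier $\psi(A_0\times[0,R])\setminus\bigl(I^{-1}[0,\rho^{\ell})\cup I^{-1}(\rho^{\ell},\infty)\bigr)=\gamma(X)\cap\mathcal{M}_\rho$ must have index $\ge i(A_0)\ge k+m-1$. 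This is a purely topological intersection statement requiring no pointwise selection; substituting it for your selection argument closes the gap and the rest of your scheme goes through.
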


The next abstract result is a direct corollary of the preceding one and will be used in the proof of Theorem~\ref{Th-subscaled+scaled+critical}.

\begin{corollary}[\cite{MePe2025}, Corollary 2.34]\label{Corollary 2.34 MP}
Let $A_0$ be a compact symmetric subset of $\mathcal{M}$ with $i(A_0) = m \geq 1$, let $R > \rho > 0$, and let
\[
A = \{u_R : u \in A_0\}, \qquad X = \{u_t : u \in A_0,\, 0 \leq t \leq R\}.
\]
Assume that
\[
\sup_{u \in A} \Phi(u) \leq 0 < \inf_{u \in \mathcal{M}_\rho} \Phi(u), \qquad \sup_{u \in X} \Phi(u) < c^*.
\]
Then $\Phi$ has $m$ distinct pairs of critical points at levels in $(0, c^*)$.
\end{corollary}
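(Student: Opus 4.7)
The plan is to derive Corollary \ref{Corollary 2.34 MP} as a direct specialization of Proposition \ref{Theorem 2.33 MP}. The key observation is that the only structural difference between the two statements is the absence of the symmetric set $B_0 \subset \mathcal{M}$: in the corollary, the condition $0 < \inf_{u \in \mathcal{M}_\rho} \Phi(u)$ replaces the more flexible $0 < \inf_{u \in B} \Phi(u)$ for $B=\{u_\rho : u\in B_0\}$. Choosing $B_0 = \mathcal{M}$ recovers exactly the hypothesis of the corollary, since then $B = \mathcal{M}_\rho$.

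First I would set $k = 1$ and keep the given $m \geq 1$, and take $B_0 := \mathcal{M}$. Then $\mathcal{M} \setminus B_0 = \emptyset$, whence $i(\mathcal{M} \setminus B_0) = 0 = k-1$, verifying the upper bound on the cohomological index of the complement. On the other side, the hypothesis $i(A_0) = m = k + m - 1$ supplies the required lower bound on $i(A_0)$. Both sets are symmetric, $A_0$ is compact by assumption and $B_0 = \mathcal{M}$ is closed, so the structural hypotheses of Proposition \ref{Theorem 2.33 MP} are fulfilled.

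Next, I would translate the energy inequalities. The outer barrier $\sup_{u\in A}\Phi(u) \leq 0$ and the tube cap $\sup_{u\in X}\Phi(u) < c^*$ appear identically in both statements. The inner barrier of Proposition \ref{Theorem 2.33 MP}, namely $0 < \inf_{u \in B}\Phi(u)$ with $B = \{u_\rho : u \in B_0\}$, becomes exactly $0 < \inf_{u \in \mathcal{M}_\rho}\Phi(u)$ under the choice $B_0 = \mathcal{M}$, which is precisely what the corollary assumes. Hence every hypothesis of Proposition \ref{Theorem 2.33 MP} holds with the parameters $(k,m) = (1,m)$.

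Applying Proposition \ref{Theorem 2.33 MP} then yields critical values
\[
0 < c^*_1 \leq c^*_2 \leq \cdots \leq c^*_m < c^*
\]
of $\Phi$, together with $m$ distinct pairs of associated critical points, which is exactly the conclusion of the corollary. There is no genuine obstacle here: the only point requiring a brief remark is the convention $i(\emptyset)=0$ for the Fadell--Rabinowitz cohomological index, which makes the choice $B_0=\mathcal{M}$ compatible with the index inequality $i(\mathcal{M}\setminus B_0) \leq k-1$ when $k=1$. The proof therefore reduces to identifying the correct specialization of the parameters and invoking Proposition \ref{Theorem 2.33 MP}.
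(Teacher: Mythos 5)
Your proposal is correct and is precisely the specialization the paper has in mind when it calls this a ``direct corollary of the preceding one'': take $k=1$ and $B_0=\mathcal{M}$, so that $B=\mathcal{M}_\rho$, $\mathcal{M}\setminus B_0=\emptyset$ has index $0=k-1$, and $i(A_0)=m=k+m-1$, after which Proposition \ref{Theorem 2.33 MP} yields the $m$ pairs of critical points at levels in $(0,c^*)$.
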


Now we prove the result of multiplicity of solution for $\lambda$ close to some eigenvalue $\lambda_k.$

\begin{proof-th} \textbf{\textit{of Theorem \ref{th cri-super-mult}.}}\addcontentsline{toc}{subsubsection}{Proof of Theorem~\ref{th cri-super-mult}}
To prove this theorem we will apply Proposition \ref{Theorem 2.33 MP}. Let us then consider $\lambda\in(0,\lambda_k)$. By Lemma \ref{lemmaPS-criticalpower+superscaledpower} we know that $\Phi$ satisfies the $(PS)_c$ for all $c<c^*$, for a    positive constant $c^*$ depending on $N$ and $\eta_1$ (and $\mu,\eta_2$ if $r_2=2^*_{\eta_2}$). Fix $\varepsilon\in(0,\lambda_{k+m}-\lambda_{k+m-1})$. Then, by Theorem \ref{lambdak}\ref{lambdak(iii)} we have
$$
i(\mathcal{M}\backslash\tilde{\Psi}_{\lambda_{k+m-1}+\varepsilon})=k+m-1.
$$
Since $\mathcal{M}\backslash\tilde{\Psi}_{\lambda_{k+m-1}+\varepsilon}$ is an open symmetric subset of $\mathcal{M}$,  there is a compact symmetric $C_\varepsilon\subset\mathcal{M}\backslash\tilde{\Psi}_{\lambda_{k+m-1}+\varepsilon}$ with $i(C_\varepsilon)=k+m-1$  (see the proof of Proposition 3.1 in Degiovanni and Lancelotti \cite{De-Lan-2007}).
We set $A_0=C_\varepsilon$ and $B_0=\tilde{\Psi}_{\lambda_k}$. The properties of $C_\varepsilon$ yield
$$
i(A_0)=k+m-1.
$$
As far as $B_0,$ we distinguish the cases $\lambda_k=\lambda_1$ and $\lambda_k>\lambda_1$. If $\lambda_1=\cdots=\lambda_k$, we have
$$
B_0=\{u\in\mathcal{M}:\tilde{\Psi}(u)\geq\lambda_1\}=\mathcal{M},
$$
by Theorem \ref{lambdak}\ref{lambdak(i)}. Then, $i(\mathcal{M}\backslash B_0)=i(\emptyset)=0\leq k-1$. In the other case, it holds $\lambda_{l-1}<\lambda_l=\cdots=\lambda_k$ for some $2\leq l\leq k$. Then, Theorem \ref{lambdak}\ref{lambdak(iii)} implies that
$$
i(\mathcal{M}\backslash B_0)=
i(\mathcal{M}\backslash\tilde{\Psi}_{\lambda_{l}})=l-1\leq k-1.
$$
Now, for some $R>\rho>0$, we set
\begin{eqnarray*}
    X=\{u_t:u\in A_0,\,0\leq t\leq R\},\quad A=\{u_R:u\in A_0\}\quad\mbox{and}\quad B=\{u_{\rho}:u\in B_0\},
\end{eqnarray*}
and show that it is possible to choose suitable $R,\rho$ and $\delta_k>0$ in such a way that, for all $\lambda\in(\lambda_k-\delta_k,\lambda_k)$, it holds that
$$
\sup_{v\in A}\Phi(v)\leq 0<\inf_{v\in B}\Phi(v)\quad\mbox{and}
\quad\sup_{v\in X}\Phi(v)<c^*.
$$
Let us recall that $\ell=\delta q+b-N<\ell_1,\ell_2$, where $\ell_i=\ell(\eta_i,r_i)=\delta r_i+\eta_i-N$.  For all $\lambda\in\mathbb{R}$, $u\in\mathcal{M}$ and $t\geq0$ we have
\begin{equation}\label{Phi cri in M}
    \Phi(u_t)=t^{\,\ell}\left(1-\frac{\lambda}{\tilde{\Psi}(u)}\right)-\frac{\mu t^{\,\ell_2}}{r_2}\int_{\mathbb{R}^N}\frac{|u|^{r_2}}{|x|^{\eta_2}}dx-\frac{t^{\,\ell_1}}{ 2^*_{\eta_1}}\int_{\mathbb{R}^N}\frac{|u|^{ 2^*_{\eta_1}}}{|x|^{\eta_1}}dx.
\end{equation}
Since $\mathcal{M}$ is bounded, there exist constants $c_1,c_2>0$ such that
$$
\Phi(u_t)\geq 
t^{\,\ell}\left(1-\frac{\lambda}{\tilde{\Psi}(u)}-c_1\mu t^{\,\ell_2-\ell}-c_2t^{\,\ell_1-\ell}\right)\geq 
\frac{t^{\,\ell}}{2}\left(1-\frac{\lambda}{\lambda_k}\right)>0,\quad \forall u\in B_0,
$$
since $\tilde{\Psi}(u)\geq\lambda_k>\lambda$, 
provided $t=\rho$ is small enough. %rho depends on \lambda\in(0,\lambda_k).
This gives us that
\begin{equation}\label{inf in B}
    \inf_{v\in B}\Phi(v)>0.
\end{equation}
Now, recall that $\lambda_{k+m-1}+\varepsilon<\lambda_{k+m}$ and  $A_0\subset\mathcal{M}\backslash\tilde{\Psi}_{\lambda_{k+m-1}+\varepsilon}\subset\mathcal{M}\backslash\tilde{\Psi}_{\lambda_{k+m}}$.
Then, from Lemma \ref{interpolation estimates} we know that there exists $c_3>0$, independent on $\varepsilon$, such that
$$
\frac{1}{2^*_{\eta_1}}\int_{\mathbb{R}^N}\frac{|u|^{2^*_{\eta_1}}}{|x|^{\eta_1}}dx\geq c_3,\quad\forall u\in A_0.
$$
Since $\lambda,\mu\geq0$, by \eqref{Phi cri in M} we obtain
 $$
\Phi(u_t)\leq t^{\,\ell}-\frac{t^{\,\ell_1}}{2^*_{\eta_1}}\int_{\mathbb{R}^N}\frac{|u|^{2^*_{\eta_1}}}{|x|^{\eta_1}}dx\leq 
t^{\,\ell}\left(1-c_3t^{\,\ell_1-\ell}\right)\leq0, \quad\forall u\in A_0,
$$
for $t=R>\rho$, large enough. Hence,
\begin{equation}\label{sup in A 2} 
    \sup_{v\in A}\Phi(v)\leq0.
\end{equation}
It remains to estimate the supremum of $\Phi$ in $X$. Again by  \eqref{Phi cri in M}, since $\lambda,\mu\geq0$, we get
\begin{equation*}
    \Phi(u_t)\leq t^{\,\ell}\left(1-\frac{\lambda}{\lambda_{k}+\varepsilon}\right)-c_3t^{\,\ell_1}, \quad\forall u\in A_0, \,t\geq0.
\end{equation*}
Thus, 
\begin{eqnarray*}
   \sup_{u\in A_0, 0\leq t\leq R}\Phi(u_t)&\leq &\max_{t\geq0}\left[t^{\,\ell}\left(1-\frac{\lambda}{\lambda_{k}+\varepsilon}\right)-c_3t^{\,\ell_1}\right]
    =\frac{\ell_1-\ell}{\ell_1}\left(\frac{\ell}{c_3\ell_1}\right)^{\frac{\ell}{\ell_1-\ell}}\left(1-\frac{\lambda}{\lambda_{k}+\varepsilon}\right)^\frac{\ell_1}{\ell_1-\ell}\\
    &=&\frac{2-\eta_1}{N-\eta_1}\left(\frac{N-2}{c_3(N-\eta_1)}\right)^\frac{N-2}{2-\eta_1}\left(1-\frac{\lambda}{\lambda_{k}+\varepsilon}\right)^\frac{N-\eta_1}{2-\eta_1}
\end{eqnarray*}
Choosing
$$
\delta_k=\lambda_k\left(c^*\frac{N-\eta_1}{2-\eta_1}\right)^\frac{2-\eta_1}{N-\eta_1}\left(\frac{c_3(N-\eta_1)}{N-2}\right)^\frac{N-2}{N-\eta_1},
$$
 we can see that for each $\lambda_k-\delta_k<\lambda<\lambda_k$, there exists $\varepsilon>0$ small enough %$0<\varepsilon<\delta_k-(\lambda_k-\lambda)$
 such that
\begin{eqnarray*}
   \sup_{u\in A_0, 0\leq t\leq R}\Phi(u_t)<c^*.
\end{eqnarray*}
Finally, combining this inequality, \eqref{inf in B} and \eqref{sup in A 2},
the conclusion now follows by Proposition \ref{Theorem 2.33 MP}.
\end{proof-th}

In the next result $\lambda\in\R$ can be any number, even an eigenvalue for \eqref{NE problem}, and does not need to be close to an eigenvalue. However, we deal with subcritical perturbation $\mu|x|^{-\eta}|t|^{r_2-2}t$ and consider $\mu$ a large number.

\begin{proof-th}
    \textbf{\textit{of Theorem \ref{th cri any lambda}.}}\addcontentsline{toc}{subsubsection}{Proof of Theorem~\ref{th cri any lambda}}
This proof is similar to the proof of previous theorem.  Since $r_2<2^*_{\eta_2}$, by Lemma \ref{lemmaPS-criticalpower+superscaledpower} we know that $\Phi$ satisfies the $(PS)_c$ for all $c<c^*(N,\eta_1)$. 
Given  $m\in\mathbb{N}$ we set 
 $$
 k_0=\min\{k\in\mathbb{N}: \lambda<\lambda_k\}\quad\mbox{and}\quad \tilde{m}=k_0+m-1.
 $$
Let  $j\in\mathbb{N}$ and $\alpha\in\mathbb{R}$ be such that $\lambda_{\tilde m}=\cdots=\lambda_{\tilde m+j-1}<\alpha<\lambda_{\tilde m+j}$.   Then, by Theorem \ref{lambdak}\ref{lambdak(iii)} we have
$$
i(\mathcal{M}\backslash\tilde{\Psi}_{\alpha})=\tilde m+j-1.
$$
Since $\mathcal{M}\backslash\tilde{\Psi}_{\alpha}$ is an open symmetric subset of $\mathcal{M}$,  there exists a compact symmetric set $C\subset\mathcal{M}\backslash\tilde{\Psi}_{\alpha}$ with $i(C)=\tilde m+j-1$  (see the proof of Proposition 3.1 in Degiovanni and Lancelotti \cite{De-Lan-2007}).
Set
$$
A_0=C\quad \mbox{and}\quad B_0=\tilde{\Psi}_{\lambda_{k_0}}.
$$ 
Hence, analyzing the cases $k_0=1$ or $k_0\geq 2$ for $B_0$, as done in the proof of Theorem \ref{Th-superscaled-subcritical-multi}, we see that
$$
i(A_0)=\tilde m+j-1=(m+j-1)+k_0-1\quad\text{and}\quad 
i(\mathcal{M}\backslash B_0)=k_0-1.
$$
As before, for some $R>\rho>0$ to be chosen later, we set
\begin{eqnarray*}
    X=\{u_t:u\in A_0,\,0\leq t\leq R\},\quad A=\{u_R:u\in A_0\}\quad\mbox{and}\quad B=\{u_{\rho}:u\in B_0\}.
\end{eqnarray*}
We claim that we may choose $R,\,\mu_m>0$ such that, for each $\mu\geq \mu_m$ there is $\rho\in(0,R)$ such that
$$
\sup_{v\in A}\Phi(v)\leq 0<\inf_{v\in B}\Phi(v)\quad\mbox{and}
\quad\sup_{v\in X}\Phi(v)<c^*.
$$
Recalling \eqref{l(eta,r)}, we have $\ell<\ell_1,\ell_2$, where $\ell_1=\ell(\eta_1,2^*_{\eta_1})$ and $\ell_2=\ell(\eta_2,r_2)$.  For all $\lambda\in\mathbb{R}$, $u\in\mathcal{M}$ and $t\geq0$ we have
\begin{equation}\label{Phi cri MPT}
    \Phi(u_t)=t^{\,\ell}\left(1-\frac{\lambda}{\tilde{\Psi}(u)}\right)-\frac{\mu t^{\,\ell_2}}{r_2}\int_{\mathbb{R}^N}\frac{|u|^{r_2}}{|x|^{\eta_2}}dx-\frac{t^{\,\ell_1}}{ 2^*_{\eta_1}}\int_{\mathbb{R}^N}\frac{|u|^{ 2^*_{\eta_1}}}{|x|^{\eta_1}}dx.
\end{equation}
Due to the compactness of  $A_0\subset\mathcal{M}$, there exists $c_0>0$ such that 
$$
\frac{1}{p}\int_{\mathbb{R}^N}\frac{|u|^{p}}{|x|^{a}}dx,\quad
\frac{1}{r_2}\int_{\mathbb{R}^N}\frac{|u|^{r_2}}{|x|^{\eta_2}}dx,\quad\frac{1}{ 2^*_{\eta_1}}\int_{\mathbb{R}^N}\frac{|u|^{ 2^*_{\eta_1}}}{|x|^{\eta_1}}dx \geq c_0,\quad\forall u\in A_0.
$$
Denoting $\lambda^-=-\min\{\lambda,0\}$, by the boundedness of $\mathcal{M}$ in $L^{p}_a(\R^N)$ we get
$$
1-\frac{\lambda}{\tilde{\Psi}(u)}\leq 1+\frac{\lambda^-}{p}\int_{\mathbb{R}^N}\frac{|u|^{p}}{|x|^a}dx\leq \beta,\quad\forall u\in A_0,
$$
for some $\beta>0$.  Since $\mu>0$, from \eqref{Phi cri MPT} we obtain
 $$
\Phi(u_t)\leq 
\beta t^{\,\ell}-\frac{t^{\,\ell_1}}{ 2^*_{\eta_1}}\int_{\mathbb{R}^N}\frac{|u|^{ 2^*_{\eta_1}}}{|x|^{\eta_1}}dx\leq \beta t^{\,\ell}-c_0t^{\,\ell_1}
\leq0, \quad\forall u\in A_0,
$$
whether $t=R$, independent of $\mu$, is large enough. Hence
\begin{equation}\label{sup in A}
    \sup_{v\in A}\Phi(v)\leq0.
\end{equation}
To estimate the supremum of $\Phi$ in $X$, using again  \eqref{Phi cri MPT} we obtain
\begin{eqnarray*}
   \sup_{u\in A_0, 0\leq t\leq R}\Phi(u_t)&\leq &\max_{t\geq0}\left[\beta t^{\,\ell}-\mu c_0t^{\,\ell_2}\right]=\frac{\ell_2-\ell}{\ell_2}\left(\frac{\beta^\frac{\ell_2}{\ell}\ell}{c_0\mu\ell_2}\right)^{\frac{\ell}{\ell_2-\ell}}.
\end{eqnarray*}
Picking $\mu_m>0$ sufficiently large, for $\mu\geq\mu_m$ it holds that
\begin{eqnarray}\label{sup in X lambda}
   \sup_{v\in X}\Phi(v)= \sup_{u\in A_0, 0\leq t\leq R}\Phi(u_t)<c^*.
\end{eqnarray}
Now we estimate $\Phi$ on $B$. By \eqref{Phi cri MPT}, due to the boundedness of $\mathcal{M}$, there exist $c_1,c_2>0$ such that
$$
\Phi(u_t)\geq 
t^{\,\ell}\left(1-\frac{\lambda^+}{\tilde{\Psi}(u)}-c_1 \mu t^{\,\ell_2-\ell}-c_2t^{\,\ell_1-\ell}\right),
$$
where $\lambda^+=\max\{\lambda,0\}$. Hence, as $\tilde{\Psi}(u)\geq\lambda_{k_0}>\lambda^+$ for all $u\in B_0$ and $\ell_1,\ell_2>\ell$,  we can choose $t=\rho\in(0,R)$ small enough such that
\begin{equation*}
  \inf_{u\in B_0}\Phi(u_\rho)=  \inf_{v\in B}\Phi(v)>0.
\end{equation*}
Due to this last inequality, \eqref{sup in A} and \eqref{sup in X lambda}, the Proposition \ref{Theorem 2.33 MP} can be applied to finish this proof.
\end{proof-th}

\subsection{Problems with subscaled nonlinearities}

We now focus on nonlinearities $f$ that display at  least a subscaled pair $(\eta, r)$.  For the proofs of Theorems \ref{Th-single-subscaled}-\ref{Th-subscaled+critical}, we make use of an abstract result in critical point theory (see \cite[Proposition~3.36]{Perera-book}), which we state here for reader's convenience.
 
\begin{proposition}\label{critical point result}
    Let $\Phi$ be an even $C^1-$functional on a Banach space $W$ such that $\Phi(0) = 0$
and $\Phi$ satisfies the $(PS)_c$ condition for all $c<0$. Let $\mathcal{F}$ denote the class of symmetric subsets
of $W \backslash\{0\}$. For $k\geq1$, let
$$
\mathcal{F}_k = \{M \in \mathcal{F}: i(M) \geq k\}
$$
and set
\begin{equation}\label{ck sub near 0}
    c_k:=\inf_{M\in \mathcal{F}_k}\sup_{u\in M}\Phi(u).
\end{equation}
If there exists a $k_0 \geq1$ such that $-\infty<c_k <0$ for all $k \geq k_0$, then $c_{k_0}\leq c_{k_0+1}\leq\cdots  \to0$ is a sequence of critical values of $\Phi$.
\end{proposition}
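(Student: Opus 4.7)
The plan is to run the classical Lusternik--Schnirelmann minimax scheme based on the Fadell--Rabinowitz cohomological index $i$, adapted to the fact that $(PS)_c$ only holds for $c<0$. The monotonicity $c_{k_0}\leq c_{k_0+1}\leq\cdots\leq 0$ is immediate from $\mathcal{F}_{k+1}\subset\mathcal{F}_k$, so the sequence converges to some $c^*\in[c_{k_0},0]$, and what must be shown is that each $c_k$ is a critical value and that $c^*=0$.

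The main analytic input is an equivariant deformation lemma at any level $c<0$: using $(PS)_c$ and a pseudo-gradient vector field for $\Phi$, symmetrized (as allowed by the evenness of $\Phi$) to give an odd flow, one produces for each small $\varepsilon>0$ a symmetric closed neighborhood $N$ of the critical set $K_c$ and an odd homeomorphism $\eta$ of $W$ with $\eta(\{\Phi\leq c+\varepsilon\}\setminus N)\subset\{\Phi\leq c-\varepsilon\}$. Since $c<0=\Phi(0)$, the sublevel set $\{\Phi\leq c-\varepsilon\}$ automatically avoids the origin. From here, the fact that $c_k$ is a critical value is standard: were it regular, apply the lemma with $N=\emptyset$ to some $M\in\mathcal{F}_k$ with $\sup_M\Phi<c_k+\varepsilon$; then $\eta(M)$ is symmetric, lies in $W\setminus\{0\}$, and odd-map monotonicity of $i$ gives $i(\eta(M))\geq i(M)\geq k$, so $\eta(M)\in\mathcal{F}_k$ with $\sup_{\eta(M)}\Phi\leq c_k-\varepsilon$, contradicting the definition of $c_k$. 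The same argument combined with the subadditivity $i(\overline{M\setminus N})\geq i(M)-i(\overline{N})$ upgrades this to a multiplicity statement: if $c_k=\cdots=c_{k+m-1}=c<0$, then $i(K_c)\geq m$.

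To prove $c^*=0$, suppose for contradiction that $c^*<0$. Then $K_{c^*}$ is compact by $(PS)_{c^*}$, does not contain the origin (because $\Phi(0)=0\neq c^*$), and therefore has finite index $j:=i(K_{c^*})$; the continuity property of $i$ yields a symmetric closed neighborhood $N\supset K_{c^*}$ with $i(\overline{N})=j$. Choose $k$ so large that $c_k>c^*-\varepsilon$ and $c_{k+j}<c^*+\varepsilon$, pick $M\in\mathcal{F}_{k+j}$ with $\sup_M\Phi<c^*+\varepsilon$, and apply $\eta$ from the deformation lemma at $c^*$: subadditivity gives $i(\overline{M\setminus N})\geq(k+j)-j=k$, while $\eta(\overline{M\setminus N})\subset\{\Phi\leq c^*-\varepsilon\}$ forces $c_k\leq c^*-\varepsilon$, a contradiction. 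Hence $c^*=0$.

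The delicate part of the argument is the deformation lemma itself, where the evenness of $\Phi$ and $(PS)_c$ must interact to produce an odd flow that pushes sublevel sets down while excising a prescribed neighborhood of $K_c$; the remainder is an accounting exercise using the three key properties of the cohomological index — odd-map monotonicity, subadditivity, and continuity on compact symmetric sets — and everything stays inside $W\setminus\{0\}$ because all relevant levels are strictly negative while $\Phi(0)=0$.
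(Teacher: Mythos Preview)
The paper does not give its own proof of this proposition: it is quoted verbatim as \cite[Proposition~3.36]{Perera-book} and used as a black box in the proofs of Theorems~\ref{Th-single-subscaled}--\ref{Th-subscaled+critical}. So there is no in-paper argument to compare against.

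That said, your sketch is the standard Lusternik--Schnirelmann argument for this statement and is correct in outline. The monotonicity of $(c_k)$ and the fact that each $c_k$ is a critical value follow exactly as you indicate, from the equivariant deformation lemma together with odd-map monotonicity of the Fadell--Rabinowitz index. Your argument that $c^*=0$ is also the right one: compactness of $K_{c^*}$ via $(PS)_{c^*}$, finiteness of $i(K_{c^*})$, the continuity property of $i$ to produce a neighborhood $N$ of the same index, and then subadditivity to drop the index by at most $j$ when excising $N$. One small care point worth flagging when you write it out in full: after excising $N$ and applying $\eta$, you must check that the resulting symmetric set actually avoids the origin before invoking membership in $\mathcal{F}_k$; this is guaranteed because $\eta$ maps it into $\{\Phi\le c^*-\varepsilon\}$ and $\Phi(0)=0$, exactly as you note in your closing remark. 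With that detail made explicit the argument is complete.
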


\begin{proof-th}
    \textbf{\textit{of Theorem \ref{Th-single-subscaled}.}}\addcontentsline{toc}{subsubsection}{Proof of Theorem~\ref{Th-single-subscaled}}   
  To proceed, let us recall the notations given in Definition \ref{defscaled} and \eqref{l(eta,r)}. Since $(\eta,r)$ is a subscaled pair, we see that the potential operator $\tilde f$ induced by the nonlinearity $f(x,t)=|x|^{-\eta}|t|^{r-2}t$ satisfies
  $$
  \tilde f(u_t)v_t=\int_{\mathbb R^N}\frac{|u_t|^{r-2}u_tv_t}{{|x|^\eta}}dx=o(t^{\,\ell})\|v\|\quad\mbox{as}\,\,t\to\infty,
  $$
uniformly in $u$ on bounded sets of $E^q_b$, for all $v\in E^q_b$. Then, it is straightforward to verify that the associated functional, which in this case is given by 
\[\displaystyle\Phi(u)=\frac{1}{2}\int_{\R^N}|\nabla u|^2dx+\frac{1}{q}\int_{\mathbb{R}^N} \frac{|u|^q}{|x|^b} dx
-\frac{1}{r}\int_{\mathbb{R}^N} \frac{|u|^r}{|x|^\eta} dx,\]
is coercive (see \cite[Lemma 2.15]{MePe2025} for a proof in a more general setting). Therefore, for any $c\in\mathbb R$, $(PS)_c$ sequences are always bounded. The compactness of the embedding $E^q_b\hookrightarrow L^r_\eta(\R^N)$ (see Corollary \ref{subscaled compact}) implies that $\Phi$ satisfies the $(PS)_c$ conditions for all levels $c\in\mathbb R$, as a consequence of \cite[Proposition 2.3]{MePe2025}.

The boundedness of ${\Phi}$ on bounded sets implies that ${\Phi}$ is bounded from below in $E^q_b$. Hence, the levels $c_k$ in \eqref{ck sub near 0} satisfy
$$
c_k\geq -C, \quad\mbox{for all}\,\, k\geq1,
$$
for some positive constant $C$. Let us show that $c_k<0$ for all $k\geq 1$. For any $u\in \mathcal M$, we have 
\begin{eqnarray*}
   \Phi(u_t)&=&t^{\,\ell} - \frac{t^{\,\ell(\eta,r)}}{r}\int_{\mathbb{R}^N}\frac{|u|^{r}}{|x|^\eta}dx
\end{eqnarray*}
and since $\ell(\eta,r)<\ell$, this implies that
\begin{eqnarray}\label{Phi <0 near origin}
   \Phi(u_t)&=&-t^{\,\ell(\eta,r)}\left(\frac{1}{r}\int_{\mathbb{R}^N}\frac{|u|^{r}}{|x|^\eta}dx+o(1)\right)
\end{eqnarray}
with $o(1)\to 0$ as $t\to 0^+$ uniformly in  $u\in\mathcal{M}$. Now, for $k\geq1$, in order to estimate the minimax level $c_k$ from above, let us set
$$
M_t=\{u_t : u\in\tilde{\Psi}^{\lambda_k}\},\quad\mbox{for}\quad t>0.
$$
We have $i(M_t)=i(\tilde{\Psi}^{\lambda_k})$ as $u\mapsto u_t$ with $u\in \tilde{\Psi}^{\lambda_k}$, is an odd homeomorphism.  Let $m\geq1$ be such that $\lambda_k=\cdots=\lambda_{k+m-1}<\lambda_{k+m}$.
By Theorem \ref{lambdak}\ref{lambdak(iii)} we have $i(\tilde{\Psi}^{\lambda_k})=k+m-1$. This implies that $i(M_t)=k+m-1\geq k,$ and so $M_t\in\mathcal{F}_k$. Using \eqref{Phi <0 near origin} and Lemma \ref{interpolation estimates} we see that
\begin{eqnarray*}
   \Phi(u_t)\leq  - t^{\,\ell(\eta,r)}\left(c+o(1)\right) <0\quad\mbox{for all}\quad u\in \tilde{\Psi}^{\lambda_k}
\end{eqnarray*}
for $t>0$ sufficiently small. % c depends on k, but this is not a problem.
Then 
\begin{equation*}
    c_k=\inf_{M\in \mathcal{F}_k}\sup_{u\in M}\Phi(u)\leq \sup_{u\in M_t}\Phi(u)=\sup_{u\in \tilde{\Psi}^{\lambda_k}}\Phi(u_t)<0,
\end{equation*}
for all $k\geq1$. By Proposition \ref{critical point result}, this concludes the proof.
\end{proof-th}

\begin{proof-th}
    \textbf{\textit{of Theorem \ref{Th-subscased+scaled+superscaled1}.}}\addcontentsline{toc}{subsubsection}{Proof of Theorem~\ref{Th-subscased+scaled+superscaled1}} The associated functional in this case is given by 
\[\displaystyle\Phi(u)=\frac{1}{2}\int_{\R^N}|\nabla u|^2dx+\frac{1}{q}\int_{\mathbb{R}^N} \frac{|u|^q}{|x|^b} dx
-\frac{\lambda}{p}\int_{\mathbb{R}^N} \frac{|u|^p}{|x|^a} dx-\frac{1}{r}\int_{\mathbb{R}^N} \frac{|u|^r}{|x|^\eta} dx+\frac{1}{r_1}\int_{\mathbb{R}^N} \frac{|u|^{r_1}}{|x|^{\eta_1}}dx.\]
Then, if $\lambda\le 0$ we get
\[\displaystyle\Phi(u)\ge \frac{1}{2}\int_{\R^N}|\nabla u|^2dx+\frac{1}{q}\int_{\mathbb{R}^N} \frac{|u|^q}{|x|^b} dx
-\frac{1}{r}\int_{\mathbb{R}^N} \frac{|u|^r}{|x|^\eta} dx,\]
and the right hand side is exactly the expression of the functional in the proof of Theorem \ref{Th-single-subscaled}, which is coercive. So, $\Phi$ is also coercive. On the other hand, if $\lambda>0$ then we assume that the pair $(\eta_1,r_1)$ is superscaled. By the interpolation given in Lemma \ref{lemma-general-interpolation} and Young's inequality, we can choose  $\tilde C>0$ and a subscaled pair $(\tilde\eta,\tilde r)$ with $b<\tilde\eta<2$ such that
\begin{equation*}
\frac{\lambda}{p}\int_{\mathbb{R}^N} \frac{|u|^p}{|x|^a} \, dx
\;\le\;
\frac{1}{r_1}\int_{\mathbb{R}^N} \frac{|u|^{r_1}}{|x|^{\eta_1}}\,dx
\;+\;
\tilde C \int_{\mathbb{R}^N} \frac{|u|^{\tilde r}}{|x|^{\tilde\eta}}\,dx.
\end{equation*}
This gives us
\[\displaystyle\Phi(u)\ge\tilde\Phi(u):= \frac{1}{2}\int_{\R^N}|\nabla u|^2dx+\frac{1}{q}\int_{\mathbb{R}^N} \frac{|u|^q}{|x|^b} dx
-\frac{1}{r}\int_{\mathbb{R}^N} \frac{|u|^r}{|x|^\eta} dx-\tilde C \int_{\mathbb{R}^N} \frac{|u|^{\tilde r}}{|x|^{\tilde\eta}}\,dx.\]
Notice that, as in the previous case, $\tilde\Phi$ is a coercive functional since both pairs $(\eta,r)$ and $(\tilde\eta,\tilde r)$ are subscaled. This implies that $\Phi$ is also coercive. Thus, $(PS)_c$ sequences  for $\Phi$ are bounded.  Let $(u_n)$ be a $(PS)_c$ sequence, for some $c\in\mathbb R$.  Since $(u_n)$ is bounded, let $u\in E^q_b$ be its weak limit, up to a subsequence. Then, we reason as in  the proof of Lemma \ref{lemmaPS-criticalpower+superscaledpower} up to \eqref{PSeq1-1critical} and reach
\begin{equation*}
\int_{\mathbb R^N}|\nabla(u_n-u)|^2dx+\int_{\mathbb R^N}\frac{|u_n-u|^q}{|x|^b}dx=-\int_{\mathbb R^N}\frac{|u_n-u|^{r_1}}{|x|^{\eta_1}}dx+o(1).
\end{equation*}
This gives that $(u_n)$ converges strongly to $u$ both in $E^q_b$ and $L^{r_1}_{\eta_1}(\mathbb R^N)$.
Since ${\Phi}$ is bounded from below in $E^q_b$, as in the proof of the previous theorem, the levels $c_k$ in \eqref{ck sub near 0} are bounded from below. We just need to check that $c_k<0$ for all $k\geq 1$. Notice that now we have, for any $u\in \mathcal M$,
\begin{eqnarray*}
   \Phi(u_t)&=&t^{\,\ell} - \frac{t^{\,\ell(\eta,r)}}{r}\int_{\mathbb{R}^N}\frac{|u|^{r}}{|x|^\eta}dx - \frac{\lambda t^{\,\ell}}{p}\int_{\mathbb{R}^N}\frac{|u|^{p}}{|x|^a}dx+\frac{t^{\,\ell(\eta_1,r_1)}}{r_1}\int_{\mathbb{R}^N}\frac{|u|^{r_1}}{|x|^{\eta_1}}dx
\end{eqnarray*}
and since $\ell(\eta,r)<\ell$ and $\ell(\eta,r)<\ell(\eta_1,r_1)$, this implies that
\begin{eqnarray*}
   \Phi(u_t)&=&-t^{\,\ell(\eta,r)}\left(\frac{1}{r}\int_{\mathbb{R}^N}\frac{|u|^{r}}{|x|^\eta}dx+o(1)\right)
\end{eqnarray*}
with $o(1)\to 0$ as $t\to 0^+$ uniformly in  $u\in\mathcal{M}$. This is exactly the same identity as in \eqref{Phi <0 near origin}, and therefore the remainder of the argument coincides with the proof of Theorem~\ref{Th-single-subscaled}.
\end{proof-th}

\begin{proof-th}
    \textbf{\textit{of Theorem \ref{Th-subscased+scaled}.}}\addcontentsline{toc}{subsubsection}{Proof of Theorem~\ref{Th-subscased+scaled}}
    The associated functional, defined in \eqref{functionals}, is in this setting given by
\[
\Phi(u)=\Phi_\lambda(u)- \frac{1}{r}\int_{\mathbb{R}^N}\frac{|u|^{r}}{|x|^\eta}dx.
\]
%As in the proof of Theorem \ref{Th-single-subscaled},
Here $f(x,t)=\lambda|x|^{-a}|t|^{p-2}t+h(x,t)$, for $h(x,t)=|x|^{-\eta}|t|^{r-2}t$, and  the associated potential operator
$\tilde f=\lambda\mathcal{B}+\tilde{h}$ is compact (due to Corollary \ref{corollary_emb} and \ref{subscaled compact}) with 
  $$
  \tilde h(u_t)v_t=\int_{\mathbb R^N}\frac{|u_t|^{r-2}u_tv_t}{{|x|^\eta}}dx=o(t^{\,\ell})\|v\|\quad\mbox{as}\,\,t\to\infty,
  $$
uniformly in $u$ on bounded sets of $E^q_b$, for all $v\in E^q_b$. So, the $(PS)$ condition holds provided $\lambda\notin\sigma(\mathcal{A},\mathcal{B})$. This is proved in a more general framework in \cite[Lemma~2.26]{MePe2025}.  We also note that it suffices to consider the case \(\lambda > 0\), since when \(\lambda \le 0\) the functional is coercive and the argument follows exactly as in the proof of Theorem~\ref{Th-single-subscaled}. Moreover, if we consider $c_k$ as in \eqref{ck sub near 0},
the estimate $c_k<0$  is also exactly the same, since here we have,  for all $u\in\mathcal{M}$,
\begin{eqnarray}\label{Phi <0 near origin 2}
   \Phi(u_t)&=&t^{\,\ell}\left(1-\frac{\lambda}{\tilde{\Psi}(u)}\right) - \frac{t^{\,\ell(\eta,r)}}{r}\int_{\mathbb{R}^N}\frac{|u|^{r}}{|x|^\eta}dx
   = -t^{\,\ell(\eta,r)}\left(\frac{1}{r}\int_{\mathbb{R}^N}\frac{|u|^{r}}{|x|^\eta}dx+o(1)\right)
\end{eqnarray}
with $o(1)\to 0$ as $t\to 0^+$ uniformly in  $u\in\mathcal{M}$. 

The main difference is that we need to estimate $c_k$ for some $k\geq k_0\geq 1$ from below. Coercivity is no longer available since we are assuming $\lambda>0$. Note that, as $\lambda$ is not an eigenvalue, we may have $0<\lambda<\lambda_1,$ or $\lambda_{k-1}<\lambda<\lambda_{k}$ for some $k\geq2$. Set \(k_0:=\min\{k\in\mathbb{N}: \lambda<\lambda_k\}.\)
By the definition of $\mathcal{F}_k$ in Proposition \ref{critical point result},  for each $M\in\mathcal{F}_k$ we have
\begin{equation}\label{M in Fk}
    i(M)\geq k\geq k_0,
\end{equation}
for all $k\geq k_0\geq 1$. Setting $Y=\{u_t : u\in\tilde{\Psi}_{\lambda_{k_0}},\,t\geq0\},$ we claim that 
\begin{equation}\label{M Y}
    M\cap Y\neq\emptyset \quad\mbox{for all}\,\, M\in\mathcal{F}_k\,\,\mbox{and}\,\,k\geq k_0.
\end{equation}
In fact, if this were not true, we would have $M\cap Y=\emptyset$ for some $M\in\mathcal{F}_k$ and $k\geq k_0$. So $\pi_{|_M}:M\to \mathcal{M}\backslash \tilde{\Psi}_{\lambda_{k_0}}$
would be an odd continuous map, and hence we would have
$
i(M)\leq i(\mathcal{M}\backslash \tilde{\Psi}_{\lambda_{k_0}}).
$
On the other hand, if $\lambda<\lambda_1$, then $k_0=1$ and by Theorem \ref{lambdak}\ref{lambdak(i)} we have $\tilde{\Psi}_{\lambda_{1}}=\mathcal{M}$ so that
$
i(\mathcal{M}\backslash \tilde{\Psi}_{\lambda_{1}})=i(\emptyset)=0,
$
whereas if $\lambda_{k_0-1}<\lambda<\lambda_{k_0}$ with $k_0\geq2$, the same Theorem \ref{lambdak}\ref{lambdak(iii)} yields
$
i(\mathcal{M}\backslash \tilde{\Psi}_{\lambda_{k_0}})=k_0-1.
$
In both cases, we would conclude that
$$
i(M)\leq i(\mathcal{M}\backslash \tilde{\Psi}_{\lambda_{k_0}})\leq k_0-1,
$$
which contradicts \eqref{M in Fk}. Thus, \eqref{M Y} holds. Therefore, for each ${M\in \mathcal{F}_k}$
$$
\sup_{u\in M}\Phi(u)\geq \sup_{u\in M\cap Y}\Phi(u)\geq\inf_{u\in M\cap Y}\Phi(u)
\geq \inf_{u\in  Y}\Phi(u). %=\inf_{u\in\tilde{\Psi}_{\lambda_{k_0}},\,t\geq0}\Phi(u_t).
$$
Since $\tilde{\Psi}(u)\geq\lambda_{k_0}>\lambda$ for all $u\in\tilde{\Psi}_{\lambda_{k_0}}$ and $\mathcal{M}$ is bounded in $L^r_\eta(\mathbb{R}^N)$ and in $L^{p}_a(\mathbb{R}^N)$, the first equality in \eqref{Phi <0 near origin 2} yields $\inf_{u\in  Y}\Phi(u):=-C>-\infty$. 
Hence
$$ 
c_k=\inf_{M\in \mathcal{F}_k}\sup_{u\in M}\Phi(u)\geq \inf_{u\in  Y}\Phi(u)\geq -C,\quad\forall k\geq k_0,
$$
and by Proposition \ref{critical point result}  we may conclude the proof. \end{proof-th}

\begin{proof-th}
    \textbf{\textit{of Theorem \ref{Th-subscaled+critical}.}}\addcontentsline{toc}{subsubsection}{Proof of Theorem~\ref{Th-subscaled+critical}} 
This proof also applies Proposition \ref{critical point result}. Here, as in \cite{MePe2025}, we use a truncation of the functional $\Phi_\mu$ associated with this problem, which is
 \begin{eqnarray*}
  \Phi_\mu(u)=\frac{1}{2}\int_{\mathbb{R}^N}|\nabla u|^2dx+\frac{1}{q}\int_{\mathbb{R}^N}\frac{|u|^q}{|x|^{b}}dx - \frac{\mu}{r}\int_{\mathbb{R}^N}\frac{|u|^r}{|x|^{\eta}}dx-\frac{1}{2^*_{\eta_1}}\int_{\mathbb{R}^N}\frac{|u|^{2^*_{\eta_1}}}{|x|^{\eta_1}}dx.
\end{eqnarray*}
%Arguing as in \cite[Lemma 3.1]{Ianni-Ruiz-2012}  
We verify that for any admissible pair $(\eta,r)$, since $r\geq2^*_{b,q,\eta}$, there exists $C>0$ such that
\begin{equation}\label{Lr by I(u)}
\int_{\mathbb{R}^N}\frac{|u|^r}{|x|^\eta}dx\leq C[I(u)]^\frac{\ell(\eta,r)}{\ell},\quad\forall u\in E^q_b.
\end{equation}
Then
\begin{eqnarray*}
  \Phi_\mu(u)&\geq& I(u)- {C}\mu[I(u)]^\frac{\ell(\eta,r)}{\ell}-{C}_1[I(u)]^\frac{\ell(\eta_1,2^*_{\eta_1})}{\ell}\quad\forall u\in E^q_b.
\end{eqnarray*}
Let us denote $\gamma_\mu(t)=t- {C}\mu t^\frac{\ell(\eta,r)}{\ell}-{C}_1t^\frac{\ell(\eta_1,2^*_{\eta_1})}{\ell}$, for $t\geq0$. So, we can write 
\begin{eqnarray*}
  \Phi_\mu(u)&\geq& \gamma_\mu(I(u)),\quad\forall u\in E^q_b.
\end{eqnarray*}
Once $\ell(\eta_1,2^*_{\eta_1})>\ell>\ell(\eta,r)>0$ %, we get  $\frac{\ell(\eta,r)}{\ell}\in(0,1)$ and $\frac{\ell(\eta_1,2^*_{\eta_1})}{\ell}>1$. So, 
there is $\mu^*>0$ such that for each $\mu\in(0,\mu^*)$  it is possible to find  $R_1(\mu)$ and $R_2(\mu)>0$ such that
\[
\gamma_\mu(t)<0,\,\,\forall t\in[0,R_1(\mu))\cup(R_2(\mu),\infty)\qquad\mbox{and}\qquad \gamma_\mu(t)\geq0,\,\,\forall t\in[R_1(\mu)),(R_2(\mu)].
\]
For $\mu\in(0,\mu^*)$ it holds that $\gamma_\mu(t)\geq \gamma_{\mu^*}(t)$ for all $t\in[R_1(\mu^*)),(R_2(\mu^*)]$, hence, $R_1(\mu)\leq R_1(\mu^*)$ and $R_2(\mu^*)\leq R_2(\mu)$.
Consider now $\xi_\mu:[0,\infty)\to[0,1]$ smooth and satisfying 
$$
\xi_\mu\equiv1\,\,\mbox{in}\,\, [0,R_1(\mu)]\quad\mbox{and}\quad \xi_\mu\equiv0\,\,\mbox{in}\,\,[R_2(\mu),\infty),
$$
and define the truncated functional $\tilde{\Phi}_\mu:E^q_b\to \mathbb{R}$ by
$$
\tilde{\Phi}_\mu(u)=\xi_\mu(I(u)){\Phi}_\mu(u).
$$
Note that if $\tilde{\Phi}_\mu(u)<0$ then $I(u)\in(0,R_1(\mu)),$ and, by the continuity of $I$, $I(v)\in(0,R_1(\mu))$ for $v$ in some neighborhood of $u$ in $E^q_b$, which imply
\begin{equation}\label{truncation<0}
    \tilde{\Phi}_\mu(u)={\Phi}_\mu(u)<0\quad\mbox{and}\quad \tilde{\Phi}'_\mu(u)={\Phi}'_\mu(u).
\end{equation}
Hence critical points for $\tilde{\Phi}_\mu$ at negative levels are actual critical points for ${\Phi}_\mu$. Thus, to conclude this proof is enough to apply Proposition \ref{critical point result} to find a sequence of critical points for $\tilde{\Phi}_\mu$ whose energy is negative. Our first step is to show that $\tilde{\Phi}_\mu$ satisfies the $(PS)_c$ condition for $c<0$, provided $\mu\in(0,\mu^*)$ for some $\mu^*$ sufficiently small. Let us consider then a sequence $(u_n)$ in $E^q_b$ such that
$$
\tilde{\Phi}_\mu(u_n)\to c<0\quad\text{and}\quad \tilde{\Phi}'_\mu(u_n)\to 0.
$$
Since $\tilde{\Phi}_\mu(u_n)<0$ for large $n$, we have $I(u_n)\in(0,R_1(\mu^*))$, which forces $(u_n)$ to be uniformly bounded for $\mu\in(0,\mu^*)$. Up to a subsequence, we may assume $u_n\rightharpoonup u$ in $E^q_b$, and this implies  $I(u)\leq R_1(\mu^*)$, for $\mu\in(0,\mu^*)$. Then,  
\eqref{Lr by I(u)} ensures the uniform boundedness of $\|u\|_{L^r_\eta}$.
Note that \eqref{truncation<0} also implies that $(u_n)$ is a $(PS)_c$ sequence for ${\Phi}_\mu$. 
Arguing as in Lemma \ref{lemmaPS-criticalpower+superscaledpower}, if  $(u_n)$ has no subsequence convergent in $E^q_b$, we see that
\begin{eqnarray*}
 c & \geq& \frac{2-\eta_1}{2(N-\eta_1)}S_{\eta_1}^{\frac{N-\eta_1}{2-\eta_1}}
   -\mu \tilde{d}\int_{\mathbb R^N}\frac{|u|^{r}}{|x|^{\eta}}dx
\end{eqnarray*}
for some positive constant $\tilde{d}$ independent of $u$ and $\mu$ (see \eqref{PSineq7} and recall that here $\ell(\eta,r)<\ell$).
Hence, the uniform boundedness of $\|u\|_{L^r_\eta}$ for $\mu\in(0,\mu^*)$ and the fact that $c<0$ gives us
\begin{equation*}
 \mu C\tilde{d}\geq \mu \tilde{d}\int_{\mathbb R^N}\frac{|u|^{r}}{|x|^{\eta}}dx \geq 
 \frac{2-\eta_1}{2(N-\eta_1)}S_{\eta_1}^{\frac{N-\eta_1}{2-\eta_1}}>0
\end{equation*}
which is not possible if  $\mu^*>0$ sufficiently small. By considering $\mu^*>0$ smaller than the first choice, if necessary, we finally conclude that $\tilde{\Phi}_\mu$ satisfies the $(PS)_c$ condition for any $c<0$ and $\mu\in(0,\mu^*)$. Now, let us pick $\mu\in(0,\mu^*)$ and estimate the levels $c_k$ defined in \eqref{ck sub near 0}. Since $\tilde{\Phi}_\mu$ is bounded, we have $c_k>-\infty$ for all $k\in\mathbb{N}$. On the other hand, since $I(u_t)=t^{\,\ell}<R_1(\mu)$ for all $u\in\mathcal{M}$, if $t>0$ is small, this gives us $\tilde\Phi_\mu(u_t)=\Phi_\mu(u_t)$. 
Recalling that $\ell(\eta_1,2^*_{\eta_1})>\ell>\ell(\eta,r)>0$, for all $u\in\mathcal{M}$ we get 
\begin{eqnarray*}%\label{Phi <0 near origin critical}
   \Phi_\mu(u_t)&=&t^{\ell}- \frac{\mu t^{\,\ell(\eta,r)}}{r}\int_{\mathbb{R}^N}\frac{|u|^{r}}{|x|^{\eta}}dx
   -\frac{t^{\,\ell(\eta_1,2^*_{\eta_1})}}{2^*_{\eta_1}}\int_{\mathbb{R}^N}\frac{|u|^{2^*_{\eta_1}}}{|x|^{\eta_1}}dx
   =-t^{\,\ell(\eta,r)}\left(\frac{\mu}{r}\int_{\mathbb{R}^N}\frac{|u|^{r}}{|x|^{\eta}}dx+o(1)\right),
\end{eqnarray*}
with $o(1)\to 0$ as $t\to 0^+$ uniformly in  $u\in\mathcal{M}$.
As in the proof of Theorem \ref{Th-single-subscaled} (see \eqref{Phi <0 near origin} and below), for 
$
M_t=\{u_t : u\in\tilde{\Psi}^{\lambda_k}\}
$
we see that $M_t\in\mathcal{F}_k$ and, by Lemma \ref{interpolation estimates}, we obtain $\sup_{u\in \tilde{\Psi}^{\lambda_k}}\Phi(u_t)<0$ for $t>0$ sufficiently small. 
Hence  
\begin{equation*}
    c_k=\inf_{M\in \mathcal{F}_k}\sup_{u\in M}\tilde\Phi_\mu(u)\leq \sup_{u\in M_t}\tilde\Phi_\mu(u)=\sup_{u\in \tilde{\Psi}^{\lambda_k}}\Phi_\mu(u_t)<0,
\end{equation*}
and by Proposition \ref{critical point result}, this concludes the proof. 
\end{proof-th}

We now arrive at the proof of the final main theorem of this work.

\begin{proof-th}
    \textbf{\textit{of Theorem \ref{Th-subscaled+scaled+critical}.}}\addcontentsline{toc}{subsubsection}{Proof of Theorem~\ref{Th-subscaled+scaled+critical}} 
    First, notice that the associated functional satisfies the $(PS)_c$ condition for all $c<c^*:=\frac{2-\eta_1}{2(N-\eta_1)}S_{\eta_1}^{\frac{N-\eta_1}{2-\eta_1}}$, as observed in Remark \ref{RemarkPSsubscaled+critical}.
    %which explains how the $(PS)_c$ condition is proved by slightly modifying the proof of Lemma \ref{lemmaPS-criticalpower+superscaledpower}.
Then, we shall apply Corollary \ref{Corollary 2.34 MP}.
For any $\lambda\in(\lambda_k,\infty)$ we may find $m\in\mathbb{N}$ such that $\lambda_{k+m-1}<\lambda\leq\lambda_{k+m}$. We fix $\tilde{\lambda}\in(\lambda_{k+m-1},\lambda)$ and  $\lambda'\in(\lambda,\infty)$. By Theorem \ref{lambdak}\ref{lambdak(iii)} we have
$$
i(\mathcal{M}\backslash\tilde\Psi_{\tilde\lambda})=k+m-1\geq k.
$$
Pick $A_0\subset \mathcal{M}\backslash\tilde\Psi_{\tilde\lambda}$ a compact symmetric subset of index $k+m-1$ (see the proof of Proposition 3.1 in \cite{De-Lan-2007}). For any $R>\rho>0$ to be chosen later, we set
$$
A=\{u_R:u\in A_0\}\quad\mbox{and}\quad X=\{u_t:u\in A_0,\,t\geq0\}.
$$
For all $u\in\mathcal{M}$ we have
\begin{eqnarray}\label{Phi sub-superscaled critical}
   \Phi(u_t)=t^{\,\ell}\left(1-\frac{\lambda}{\tilde\Psi(u)}\right)+ \frac{\mu\, t^{\,\ell(\eta,r)}}{r}\int_{\mathbb{R}^N}\frac{|u|^{r}}{|x|^{\eta}}dx-\frac{t^{\, \ell(\eta_1,2^*_{\eta_1})}}{2^*_{\eta_1}}\int_{\mathbb{R}^N}\frac{|u|^{2^*_{\eta_1}}}{|x|^{\eta_1}}dx.
\end{eqnarray}
Note that as $\lambda'>\lambda>\lambda_1$, the boundedness of $\|u\|_{L^{2^*_s}}$ for $u\in\mathcal{M}$, Theorem \ref{lambdak}\ref{lambdak(i)} and Lemma \ref{interpolation estimates} give us the estimate
\begin{eqnarray*}
   \Phi(u_t)\geq -t^{\,\ell}\left(\frac{\lambda}{\lambda_{1}}-1\right)+ c_1\mu t^{\,\ell(\eta,r)}-c_2t^{\, \ell(\eta_1,2^*_{\eta_1})}\quad \forall u\in\tilde\Psi^{\lambda'}, \forall t\geq0,
   \end{eqnarray*}
for some $c_1,c_2>0$ which do not depend on $\mu$. Also observe that 
\begin{eqnarray*}
   \Phi(u_t)\geq t^{\,\ell}\left(1-\frac{\lambda}{\lambda'}\right)-c_2t^{\, \ell(\eta_1,2^*_{\eta_1})}\quad \forall u\in\mathcal{M}\backslash \tilde\Psi^{\lambda'}, \forall t\geq0.
   \end{eqnarray*}
Recalling that $\, \ell(\eta_1,2^*_{\eta_1})>\ell>\ell(\eta,r)$ we conclude that
$$
\inf_{u\in\mathcal{M}}\Phi(u_\rho)>0
$$
provided $\rho>0$ is small enough. 
On the other hand, since $A_0\subset \mathcal{M}\backslash\tilde\Psi_{\tilde\lambda}$  we have $\tilde\Psi(u)< \tilde{\lambda}$ for any $u\in A_0$. Moreover, $\inf_{u\in A_0}\|u\|_{L^{2^*_{\eta_1}}_{\eta_1}}>0$, as $A_0$ is compact.
Hence, using the boundedness of $\|u\|_{L^r_{\eta}}$ for $u\in\mathcal{M}$, by \eqref{Phi sub-superscaled critical} we get
\begin{eqnarray*}
   \Phi(u_t)\leq -t^{\,\ell}\left(\frac{\lambda}{\tilde{\lambda}}-1\right)+ c'\mu t^{\,\ell(\eta,r)}-c_3t^{\,\ell(\eta_1,2^*_{\eta_1})}\quad \forall u\in A_0, \forall t\geq0,
   \end{eqnarray*}
for some $c',c_3>0$. Picking now $R>\rho$ sufficiently large, we obtain
$$
\sup_{u\in A}\Phi(u)=\sup_{u\in A_0}\Phi(u_R)\leq0.
$$
Note also that, for $u\in A_0$ and $t\geq0$, we have
\begin{align*}
   \Phi(u_t)\le&\  c'\mu t^{\,\ell(\eta,r)}-c_3t^{\,\ell(\eta_1,2^*_{\eta_1})}\\\leq&\  \big(\ell(\eta_1,2^*_{\eta_1})-\ell(\eta,r)\big)\left[\frac{\ell(\eta,r)}{c_3}\right]^\frac{\ell(\eta,r)}{\ell(\eta_1,2^*_{\eta_1})-\ell(\eta,r)}
   \left[\frac{c'\mu}{\ell(\eta_1,2^*_{\eta_1})}\right]^\frac{\ell(\eta_1,2^*_{\eta_1})}{\ell(\eta_1,2^*_{\eta_1})-\ell(\eta,r)}.
   \end{align*}
As a consequence, for $\mu\in(0,\mu^*)$ with $\mu^*$ sufficiently small, it holds that
$$
\sup_{u\in X}\Phi(u)=\sup_{u\in A_0,t\in[0,R]}\Phi(u_t)<\frac{2-\eta_1}{2(N-\eta_1)}S_{\eta_1}^{\frac{N-\eta_1}{2-\eta_1}},
$$
and therefore, by Corollary \ref{Corollary 2.34 MP} there exist $k+m-1(\geq k)$ pairs of critical points for $\Phi$, at positive energy levels. This concludes the proof.
\end{proof-th}
 
\section{Radial improvements}\label{sectionradial}

This section is devoted to exploring the improvements that arise when working in the 
radial subspace \(E^q_{b,\mathrm{rad}} \subset E^q_b\). When \(\eta > b\ge0\), no  improvement is obtained, and the embedding ranges coincide with those of the nonradial case, presented in Lemma~\ref{Lemma Embedding with weight}. In contrast, as we can see  in Lemma~\ref{Lemma Embedding radial}, the embeddings admit a strictly larger range of exponents in the radial setting when \(0 \le \eta < b\).  
 In particular, for \(N = 2\) we prove that one can work with singular weights \(|x|^{-\eta}\) even when \(\eta < b\), a feature that is not available in the 
nonradial setting.
Moreover, in $E^q_{b,\rm rad}$ we reach compactness of the embeddings in the case $\eta=b=0$ for $N\ge3$ and in the case $\eta=b\ge0$ for $N=2$. Notice that in these cases Lemma~\ref{Lemma Embedding with weight} provides a nontrivial interval for continuous embedding, without compactness.

The main tool leading to these improved ranges is provided by 
\cite[Theorem~18 and Proposition~26]{Mallick-Nguyen-2023}, where refined versions of the Caffarelli–Kohn–Nirenberg inequalities are established for radial smooth functions. After proving the radial embeddings, we list some existence results for radial solutions of \eqref{general problem} and compare them with those obtained in the nonradial case.

To denote this new interval of embeddings,  for $N\geq2$, $0\le\eta< N$ and $0\le b< 2<q$, we consider
\begin{equation*}%\label{def2bqeta-radial}
2^{\text{rad}}_{b,q,\eta}:=
\begin{cases}
   \vspace{.2cm}
\frac{q(N-\eta)}{N-b}&if \quad b\leq\eta,\\
 \frac{q(2N-2-\eta)+2(b-\eta)}{2N-2-b}&if \quad 0\le\eta<b .  
\end{cases}
\end{equation*}
 Notice that if  \(\eta \ge b\) then \(2^{\mathrm{rad}}_{b,q,\eta}=2^*_{b,q,\eta}\). In particular,  \(2^{\mathrm{rad}}_{b,q,b}=2^*_{b,q,b}=q\).  On the other hand, for \(0\le\eta < b\)  we obtain
    \[
       q< \frac{q(N-\eta)}{N-b}
        \;<\;
        2^{\mathrm{rad}}_{b,q,\eta} \;<\; 2^*_\eta
        \; \;( \; 2^{\mathrm{rad}}_{b,q,\eta}\;<\;
        2^*_{b,q,\eta}\quad\text{if}\quad N\ge3).
    \]
    We are able to prove the following:
\begin{lemma}\label{Lemma Embedding radial}
Suppose $N\ge 2$, let $0\le b<2<q<2^*_b$ and $0\le\eta\le b$.
   Then we have $E^q_{b,rad}\hookrightarrow L^r_\eta(\R^N)$ for 
 $$
   \begin{cases}
                \vspace{.2cm}
   
           r\in \left[2^{\rm{rad}}_{b,q,\eta},\infty\right)&\text{if}\quad N=2,\\
           
           \vspace{.2cm}
           
        r\in \left[2^{\rm{rad}}_{b,q,\eta},2^*_\eta\right]&\text{if}\quad N\ge 3,
    \end{cases}
    $$
     and for $\theta\in \left[0,1\right]$ such that $\frac{1}{r}=\theta\frac{N-2}{2(N-\eta)}+(1-\theta)\frac{N-b}{q(N-\eta)}$ it holds
    \[
\left(\int_{\R^N}\frac{|u|^r}{|x|^\eta} dx\right)^\frac{1}{r}\leq C \|\nabla u\|^\theta_2 \| |\cdot|^\frac{-b}{q} u \|_q^{1-\theta},\quad\forall u\in E^q_{b,rad}(\R^N).
\]
Moreover, all these embeddings are compact except for the endpoints of these intervals.
\end{lemma}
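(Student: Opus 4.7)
The plan is to mirror the proof of Lemma~\ref{Lemma Embedding with weight} but substitute the classical Caffarelli--Kohn--Nirenberg inequality \eqref{CKN ineq} by its radial refinement, namely \cite[Theorem~18]{Mallick-Nguyen-2023}, and substitute the compactness tool (Proposition~\ref{propMallickNguyen}) by its radial counterpart \cite[Proposition~26]{Mallick-Nguyen-2023}. The key feature of the radial refinement is that it relaxes the restriction \eqref{sigma's condition} on the parameter $\sigma$: for radial functions one may take $\sigma$ positive, which is precisely what we need in the regime $0\le\eta<b$, since the corresponding $\sigma$ computed via \eqref{parameters} is then positive (as already observed in the proof of Lemma~\ref{Lemma Embedding with weight}, $\operatorname{sgn}(\sigma)=\operatorname{sgn}(b-\eta)$ for $\theta\in(0,1)$).

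First, I would fix $r$ in the closure of the prescribed interval, choose $\theta\in(0,1]$ satisfying the dimensional balance
\[
\frac{1}{r}=\theta\,\frac{N-2}{2(N-\eta)}+(1-\theta)\,\frac{N-b}{q(N-\eta)},
\]
set $\gamma=-\eta/r$, and solve for $\sigma$ from $\gamma=\theta\sigma-(1-\theta)b/q$. Then I would verify that the parameters $(r,\theta,q,\sigma,\gamma)$ fall within the admissible range of the radial CKN inequality of \cite{Mallick-Nguyen-2023}; the lower endpoint $r=2^{\text{rad}}_{b,q,\eta}$ should precisely correspond to the value of $\theta$ that saturates the new radial constraint on $\sigma$, while the upper endpoint corresponds to $\theta=1$ (yielding $r=2^*_\eta$ when $N\ge 3$, and no upper bound when $N=2$). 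Having secured the inequality on $C^\infty_{c,\mathrm{rad}}(\mathbb{R}^N)$, the embedding into $E^q_{b,\mathrm{rad}}$ extends by density, as shown in Proposition~\ref{Density} restricted to the radial subspace.

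For the compactness claim on any open subinterval, I would take a bounded sequence $(u_n)\subset E^q_{b,\mathrm{rad}}$, approximate by radial compactly supported smooth functions, and apply \cite[Proposition~26]{Mallick-Nguyen-2023}: the boundedness of $\bigl(\|\nabla u_n\|_{2}\bigr)$ and $\bigl(\|\,|x|^{-b/q}u_n\|_{q}\bigr)$, together with the strict inequalities $2^{\text{rad}}_{b,q,\eta}<r<2^*_\eta$ (or $r<\infty$ when $N=2$), yields a convergent subsequence in $L^r_\eta(\mathbb{R}^N)$.

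The main obstacle will be the algebraic bookkeeping that pinpoints $2^{\text{rad}}_{b,q,\eta}$ as the lower threshold, that is, showing that the value of $\theta$ corresponding to this exponent saturates precisely the radial condition on $\sigma$ in \cite[Theorem~18]{Mallick-Nguyen-2023}. Once this identification is verified, all remaining steps are parallel to the nonradial case, with the two-dimensional setting being handled uniformly since the radial CKN framework does not single out $N=2$ as an exceptional case for $\eta<b$.
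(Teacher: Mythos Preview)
Your proposal is correct and follows essentially the same approach as the paper: apply the radial CKN inequality of \cite[Theorem~18]{Mallick-Nguyen-2023} in place of the classical one to handle the range where $\sigma>0$, identify the radial constraint (which is $\sigma\le N-1$) as the condition that determines the lower threshold $2^{\mathrm{rad}}_{b,q,\eta}$ via the corresponding $\theta=\theta_2$, and use \cite[Proposition~26]{Mallick-Nguyen-2023} for compactness away from the endpoints. The paper additionally observes that for $N\ge3$ and $\theta\in[\theta_1,1]$ the classical CKN already applies (since $\sigma\le0$ there), so the radial refinement is only needed for $\theta\in(0,\theta_1)$; this is a minor organizational point and does not affect your argument.
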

\begin{proof}
We are going to apply CKN inequality again, for $\gamma=-\eta/r$. If $\theta=0$, since $\gamma$ satisfies \eqref{parameters} we get  $\eta=b$ and $r=q\  (=2^{\rm{rad}}_{b,q,b})$ and so the inequality obviously holds. Let us consider $\theta>0$.
From the proof of Lemma \ref{Lemma Embedding with weight},  for
$N\ge3$ we get $\sigma=-\eta/(r\theta)+(1-\theta)b/(q\theta)\leq0$ for all $\theta\in(0,1]$ if $\eta= b$ and for all $\theta\in[\theta_1,1]$ when $0\le\eta<b$. 
Moreover, $\sigma\geq-1$ if $\theta=1$, since $\eta\leq2$. When $N=2$ and $\eta=b$ we get $\sigma=0$  for all $\theta\in(0,1]$.
Then \eqref{CKN ineq} holds for any $u\in C_c^\infty(\R^N)$, in particular for radial functions, for $r$ related to these $\theta$.

On the other hand, when $0\le\eta<b$, for $\theta\in(0,\theta_1)$ if $N\geq3$, or $\theta\in(0,1)$ if $N=2$, we have $\sigma>0$ so that \eqref{sigma's condition} is not satisfied. Nevertheless, for radial functions we can apply \cite[Theorem 18]{Mallick-Nguyen-2023} which guarantees that 
\eqref{CKN ineq} still holds if we assume \eqref{parameters} with $\theta>0$, \eqref{dimensional balance} and consider $\sigma\in(0,N-1]$. We verify that, for $\theta\in(0,1)$, one has
\[
N-1\geq \sigma=-\frac{\eta(N-2)}{2(N-\eta)}-\frac{(1-\theta)N(\eta-b)}{\theta q(N-\eta)} \quad\Longleftrightarrow \quad\theta\geq 
\frac{2(b-\eta)}{2(b-\eta)+q(2N-2-\eta)}=:\theta_2.
\]
Moreover, for $\theta=\theta_2$ we get $r=2^{\text{rad}}_{b,q,\eta}$.

To obtain compact embeddings away from the endpoints, one argues exactly as in the proof of Lemma~\ref{Lemma Embedding with weight}, but using \cite[Proposition~26]{Mallick-Nguyen-2023} instead of Proposition~\ref{propMallickNguyen}, which allows \( \sigma < N - 1\).
This completes the proof.
\end{proof}

All the existence and multiplicity results stated in  Theorems~\ref{Th-superscaled-subcritical}–\ref{Th-subscaled+scaled+critical},
which were formulated under the admissibility condition of Definition~\ref{defscaled} and rely only on the weighted Sobolev embeddings from Lemma~\ref{Lemma Embedding with weight}, remain valid, with exactly the same proofs, when restricted to the radial subspace $E^q_{b,\mathrm{rad}}$. In case $0\le\eta\le b$, some of them can be improved provided that the admissibility requirement on $(\eta,r)$ is replaced by the alternate definition: 

\begin{definition}\label{defscaled-radial}
Assume \eqref{basicparametershypothesis}.
We say that  the pair \((\eta,r)\) is \emph{admissible} if
\[
    0 \le \eta < \frac{N+2}{2}
    \quad \text{and}\quad
    r \in (\,2^{\rm{rad}}_{b,q,\eta},\; 2^*_{\eta}\,),\quad\text{with}\quad r> 1.
\]
 In addition, when \(\eta \le b\) we allow \(r = 2^{\rm{rad}}_{b,q,\eta}\), and if 
\(\eta \le 2\) and \(N \ge 3\) we include the second endpoint \(r = 2^*_\eta\).
  The classification of these admissible pairs as scaled, subscaled and superscaled follows the same one as in Definition \ref{defscaled}.
\end{definition}

We use this new definition on both theorems below. The first one comprises the results of Theorems \ref{Th-superscaled-subcritical} and \ref{Th-superscaled-subcritical-multi}.

\begin{theorem}\label{Th-superscaled-subcritical-radial}
Assume \eqref{basicparametershypothesis}. 
Suppose $f(x,t)=\lambda|x|^{-a}|t|^{p-2}t+|x|^{-\eta}h(t)$, with $\lambda\in\R$, $0\le\eta<N-q(N-2)/2$ and $h=H'$ for $H:\R\to\R$ a $C^1$ function such that
 \begin{align*}
 |h(t)|\leq c_1|t|^{r_1-1}+c_2|t|^{r_2-1}
 \quad\text{and}\quad
 c |t|^r\leq r H(t)\leq h(t)t,\quad\forall t\in\R,
 \end{align*}
for some constants $c,c_1,c_2>0$, and superscaled pairs $(\eta,r_1)$, $(\eta,r_2)$ and $(\eta,r)$, with $2^{\rm{rad}}_{b,q,\eta}<r_1,r_2<2^*_\eta$ and $q<r<2^*_\eta$.
Then:
\begin{description}
    \item[i)] If $\lambda$ is not an eigenvalue of \eqref{NE problem}, then problem \eqref{general problem} has a solution with positive energy.
\item[ii)] If $\lambda\in\mathbb{R}$ then problem \eqref{general problem} has a nontrivial radial solution. 
\item[iii)] If $\lambda\in\mathbb{R}$ and $h$ is an odd function, then  problem \eqref{general problem} has a sequence $(u_k)$ of nontrivial radial solutions satisfying $\Phi(u_k)\nearrow\infty$.
\end{description}
\end{theorem}

The next result describes the improvements that can be obtained in Theorems~\ref{th cri-super-mult} and \ref{th cri any lambda} and Corollary \ref{corollary-th} in the radial setting.

\begin{theorem}\label{th cri-super-mult-radial}
Assume $N\geq3$ and \eqref{basicparametershypothesis}. 
Consider $f$ as in \eqref{f critical r2}, with $\mu \ge 0$, $\eta_1$ satisfying \eqref{eta1 small} and the superscaled pair $(\eta_2,r_2)$  satisfying one of the following conditions:
\begin{align*}
&\eta_2\in[0,\eta_1)\quad \text{and}\quad 2^*_{\eta_1}\leq r_2< 2^*_{\eta_2}\quad\text{or}\\
&\eta_2\in[\eta_1,2)\, \,(\eta_2=\eta_1\,\, \text{only if} \,\,\eta_1>b)\quad  \text{and} \quad r_2<2^*_{\eta_2},
\quad\text{with}\,\,
r_2> q+(2^*_{\eta_1}-q)\frac{(b-\eta_2)}{b-\eta_1}
\quad\text{if}\quad\eta_2< b. 
\end{align*} 
The following results hold:
\begin{itemize}

\item[i)] If for some $k,m\in\mathbb{N}$ it holds that
\[
\lambda_k = \lambda_{k+1} = \dots = \lambda_{k+m-1} < \lambda_{k+m},
\]
then there is a number $\delta_k > 0$ such that problem~\eqref{general problem} possesses at least $m$ distinct pairs of nontrivial radial solutions with positive energy whenever
\(\lambda \in \bigl(\lambda_k - \delta_k,\; \lambda_k\bigr).\) 

\item [ii)] For any arbitrary $\lambda\in\mathbb R$ and  $m \in\mathbb{N}$, there exists  $\mu_m>0$ such that, for all $\mu>\mu_m$, problem~\eqref{general problem} admits at least $m$ distinct pairs of nontrivial solutions with positive energy. 
\end{itemize}
\end{theorem}

Another result that benefits from the radial setting is Theorem~\ref{Th-subscased+scaled+superscaled1}. 
There, the admissible pair $(\eta_1,r_1)$ is required to satisfy $\ell(\eta_1,r_1)>\ell(\eta,r)$ when $\lambda\le 0$, or $\ell(\eta_1,r_1)>\ell(a,p)$ when $\lambda>0$. 
In both cases, the admissible range for $r_1$ enlarges when $0\le\eta<b$, if Definition~\ref{defscaled-radial} is used in place of Definition~\ref{defscaled}. 
The remaining results (namely, Theorems~\ref{Th-single-subscaled}, \ref{Th-subscased+scaled}, \ref{Th-subscaled+critical}, and \ref{Th-subscaled+scaled+critical}) either involve subscaled pairs, which never occur when $\eta\le b$, or require critical exponents, and therefore do not benefit from the radial improvement.

\medskip

\section{Appendix}\label{sectionappendix}

In this section, we prove the density of smooth functions in the functional spaces that make up the framework of this work.

\begin{proposition}\label{Density}
Consider $N\geq2$, $b\in[0,N)$ and $q\geq1$. If $N=2$ assume also that $q\geq b$. Then $C^\infty_c(\R^N)$ is dense in $E^q_b$.
\end{proposition}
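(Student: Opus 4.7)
The plan is a classical two-step density argument: first truncate $u$ via a smooth cutoff at infinity so that the resulting function has compact support, then mollify to land in $C^\infty_c(\mathbb{R}^N)$.

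First I would fix $\phi \in C^\infty_c(\mathbb{R}^N)$ with $\phi \equiv 1$ on $B_1$ and $\operatorname{supp}\phi \subset B_2$, and set $\phi_R(x) = \phi(x/R)$, so that $|\nabla \phi_R| \leq C/R$ with support in the annulus $A_R := \{R \leq |x| \leq 2R\}$. Showing $u\phi_R \to u$ in $E^q_b$ splits into two parts: the $L^q_b$ convergence $\int |u|^q(1-\phi_R)^q/|x|^b\,dx \to 0$ follows immediately by dominated convergence. For the gradient, writing $\nabla(u - u\phi_R) = (1-\phi_R)\nabla u - u\,\nabla\phi_R$, the first summand goes to zero in $L^2$ by dominated convergence, and the essential task reduces to verifying
\[
\frac{1}{R^2}\int_{A_R}|u|^2\,dx \longrightarrow 0 \quad \text{as } R\to\infty.
\]

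For $N\geq 3$ this is immediate from Sobolev's inequality $\|u\|_{L^{2^*}}\leq C\|\nabla u\|_{L^2}$ combined with H\"older:
\[
\frac{1}{R^2}\int_{A_R}|u|^2\,dx \leq \frac{|A_R|^{2/N}}{R^2}\|u\|_{L^{2^*}(A_R)}^2 \leq C\|u\|_{L^{2^*}(A_R)}^2 \to 0.
\]
The two-dimensional case is the main obstacle, since no such Sobolev embedding of $D^{1,2}(\mathbb{R}^2)$ is available. Here I would use a scaling argument. Setting $v(y):=u(Ry)$ on $\Omega=\{1\le|y|\le 2\}$, a change of variables yields $R^{-2}\int_{A_R}|u|^2\,dx = \|v\|_{L^2(\Omega)}^2$, $\int_\Omega|\nabla v|^2\,dy = \int_{A_R}|\nabla u|^2\,dx$, and, using $|x|^b\le(2R)^b$ on $A_R$,
\[
\int_\Omega|v|^q\,dy \leq 2^b R^{b-N}\int_{A_R}\frac{|u|^q}{|x|^b}\,dx \to 0,
\]
since $b<2=N$ and $\int_{A_R}|u|^q/|x|^b\to 0$ by dominated convergence. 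By Poincar\'e on the bounded Lipschitz domain $\Omega$, $\|v-\bar v\|_{L^2(\Omega)}\le C\|\nabla v\|_{L^2(\Omega)} = C\|\nabla u\|_{L^2(A_R)}\to 0$, and H\"older gives $|\bar v|\leq|\Omega|^{-1/q}\|v\|_{L^q(\Omega)}\to 0$; combining these two estimates gives $\|v\|_{L^2(\Omega)}\to 0$. The two-dimensional hypothesis $q\ge b$ is consistent with the compatibility of the weight $|x|^{-b}$ and the integrability exponent $q$ used at this step.

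For the second step, $u\phi_R$ is an element of $E^q_b$ compactly supported in $B_{2R}$. Convolving with a standard mollifier $\rho_\delta$ produces $u_{R,\delta}:=(u\phi_R)*\rho_\delta \in C^\infty_c(\mathbb{R}^N)$, supported in $B_{2R+\delta}$. The gradient convergence $\nabla u_{R,\delta} = (\nabla(u\phi_R))*\rho_\delta \to \nabla(u\phi_R)$ in $L^2$ is the standard property of mollifiers, while convergence in $L^q_b$ follows because $|x|^{-b}$ is a Muckenhoupt $A_q$ weight for $0<b<N$ and $q\ge 1$, so convolution with an approximate identity converges in $L^q(|x|^{-b}\,dx)$. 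A diagonal argument on $(R_n,\delta_n)$ then yields $C^\infty_c(\mathbb{R}^N)$-approximants to $u$ in $E^q_b$.

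The principal obstacle is the $N=2$ case of the cutoff step: the absence of a critical Sobolev embedding forces the scaling-plus-Poincar\'e analysis above, in which the weighted $L^q_b$ control of $u$ (via the scaling factor $R^{b-N}$, which requires $b<N$) and the $L^2$ control of $\nabla u$ jointly compensate for the lack of any embedding of $D^{1,2}(\mathbb{R}^2)$ into a Lebesgue space.
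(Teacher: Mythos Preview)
Your proof is correct and differs from the paper's in two substantive ways. First, the paper uses a \emph{double} truncation $u_k=\varphi_k\,T_k(u)$, combining the spatial cutoff with the vertical truncation $T_k(s)=\operatorname{sgn}(s)\min\{|s|,k\}$; the resulting bound $|u_k|\le k$ is exactly what drives the paper's mollification step, which avoids Muckenhoupt theory altogether by splitting $B_{2k}$ into $\{|x|^{-b}\le n^q\}$ and its complement and using the $L^\infty$ bound on the singular piece. Second, for the $N=2$ cutoff term the paper estimates $\|\nabla\varphi_k\cdot T_k(u)\|_{L^2}$ by a case split $q\le 2$ versus $q>2$, exploiting $|T_k(u)|^2\le k^{2-q}|u|^q$ in the former---and this is precisely where the hypothesis $q\ge b$ enters. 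Your scaling-plus-Poincar\'e argument on the fixed annulus is cleaner and, notably, does \emph{not} use $q\ge b$ anywhere; your remark that the hypothesis is ``consistent'' with the argument is therefore vacuous, and in fact your proof establishes the stronger statement for all $q\ge 1$, $0<b<2$. In short, you trade the paper's elementary, self-contained estimates for appeals to standard tools (Poincar\'e on a bounded domain, $A_q$ weights), gaining brevity and a slightly more general conclusion. One minor point shared by both arguments: the $N\ge 3$ step tacitly uses $u\in L^{2^*}(\mathbb{R}^N)$, which does not follow from $\nabla u\in L^2$ alone and requires the auxiliary observation that the weighted integrability $u\in L^q_b$ forces the additive constant in the representation $u=c+(u-c)$, $u-c\in L^{2^*}$, to vanish.
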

\begin{proof}
Consider $\varphi\in C^\infty_c(\R^N)$ such that $0\leq\varphi(x)\leq1$, $\|\nabla\varphi\|_\infty\leq2$, $\varphi\equiv1$ in $B_1$ and $\varphi\equiv0$ in $B_2^c$. Then define the sequence $\varphi_k(x)=\varphi(x/k)$. 
Consider also the sequence of functions $T_k:\R\to\R$ given by
\[
T_k(s)=\begin{cases}
    s&\text{if}\quad |s|<k\\
    k\frac{s}{|s|}&\text{if}\quad |s|\ge k.
\end{cases}
\]
For a fixed $u\in E^q_b$ we denote $u_k:=\varphi_k T_k(u)$. It is easily seen that $u_k\to u$
in $ L^q_b(\R^N)$. Moreover,
\begin{equation}\label{conv L2}
\left\|\frac{\partial u_k}{\partial x_i}-\frac{\partial u}{\partial x_i}\right\|_{L^2}\leq 
\left\|\frac{\partial \varphi_k}{\partial x_i}T_k(u)\right\|_{L^2}+
\left\|\left(\frac{\partial (T_k(u))}{\partial x_i}-\frac{\partial u}{\partial x_i}\right)\varphi_k\right\|_{L^2}+
\left\|\frac{\partial u}{\partial x_i}(1-\varphi_k)\right\|_{L^2}.
\end{equation}
Since $\varphi_k(x)\to1$ and $\frac{\partial (T_k(u))}{\partial x_i}(x)\to \frac{\partial u}{\partial x_i}(x)$ a.e. in $\R^N$ and $\frac{\partial u}{\partial x_i}\in L^2(\R^N)$, these last two norms go to $0$ as $k\to\infty$. Moreover, for $i\in\{1,\cdots,N\}$, it holds
\[
\int_{\R^N}\left|\frac{\partial \varphi_k}{\partial x_i}T_k(u)\right|^2dx=\frac1{k^2}\int_{\R^N}\left|\frac{\partial \varphi}{\partial x_i}(x/k)T_k(u(x))\right|^2dx.
\]
Here we analyze the cases $N=2$ or grater. When $N\ge3$ we have $E^q_b\hookrightarrow L^{2^*}(\R^N)$. Then, using $|T_k(s)|\leq|s|$ and $supp(\frac{\partial \varphi_k}{\partial x_i})\subset B_{2k}\setminus B_k$, and the Holder's inequality we get
\[
\int_{\R^N}\left|\frac{\partial \varphi_k}{\partial x_i}T_k(u)\right|^2dx\leq
\|\nabla\varphi\|_{L^N}^2\left(\int_{B_{2k}\setminus B_k}|u|^{2^*}dx\right)^\frac{2}{2^*}
=o(1),\quad\text{as}\,\,k\to\infty.
\]
Assume now  $N=2$ and $b\in[0,2)$. If $\max\{b,1\}\leq q\leq 2$, since $|T_k(s)|\leq\min\{|s|,k\}$ and $u\in L^q_b(\R^2)$,  we have
\begin{align*}
\int_{\R^2}\left|\frac{\partial \varphi_k}{\partial x_i}T_k(u)\right|^2dx
&\leq\frac{k^{2-q}}{k^2}\|\nabla\varphi\|_\infty^2\int_{B_{2k}\setminus B_k}|u(x)|^qdx
\leq \frac{2^{b+2}}{k^{q-b}}\int_{B_{2k}\setminus B_k}\frac{|u(x)|^q}{|x|^b}dx=o(1).
\end{align*}
In case $q>2$, using Holder's inequality again we get
\begin{align*}
\int_{\R^2}\left|\frac{\partial \varphi_k}{\partial x_i}T_k(u)\right|^2dx
&\leq k^{-\frac{2(2-b)}{q}}\|\nabla\varphi\|_{L^\frac{2q}{q-2}}^2\|u\|_{L^q_b}^2=o(1).
\end{align*}
Thus, for $N=2$, $b\in[0,2)$ and $ q\ge\max\{b,1\}$ or $N\ge3$, $b\in[0,N)$ and $q\geq1$,  from \eqref{conv L2} we obtain $\nabla u_k\to \nabla u$ in $(L^2(\R^N))^N$. Therefore, $u_k\to u$ in $E^q_b$.

Now, let us deal with smooth functions with compact support. Given $\varepsilon>0$, fix $k_0$ such that $u_0:=u_{k_0}$ satisfies
\begin{align}\label{norma geral}
\|u_{0}-u\|<\frac{\varepsilon}{2}.
\end{align}
Since $b\in[0,N)$ we have $w(x):=|x|^{-b}$ a function in $L^1_{loc}(\R^N)$. So, $w\chi_{\{w\leq n^q\}}\to w$ in $L^1(B_{2k_0})$ as $n\to\infty$ and we fix $n_0\in\mathbb{N}$ such that
\begin{align}\label{(i)}
    \|w\chi_{\{w\leq n^q_0\}}- w\|_{L^1(B_{2k_0})}<\frac{\varepsilon}{10k_0}.
\end{align}
Also, choose $\sigma>0$ such that
\begin{align}\label{(ii)}
    \| w\|_{L^1(B_{2k_0+\sigma}\setminus B_{2k_0})}<\frac{\varepsilon}{10k_0}.
\end{align}
At this point, we fix a function $\rho\in C^\infty_c(\R^N)$ such that $\rho\geq0$, $supp(\rho)\subset B_1$ and $\|\rho\|_{L^1}=1$, and then define $\rho_m(x)=m^N\rho(mx)$. 
We know that $\rho_m*v\to v$ in $L^2(\R^N)$ for all $v\in L^2(\R^N)$. In particular, it holds for $v=\frac{\partial u_0}{\partial {x_i}}$. We  also have $\rho_m*u_0\to u_0$ in $L^q(\R^N)$, as $m\to\infty$. Then we fix $m_0\in\mathbb{N}$ sufficiently large so that 
\begin{align}\label{(iii)2}
    \left\|\rho_{m_0}*\frac{\partial u_0}{\partial {x_i}}-\frac{\partial u_0}{\partial {x_i}}\right\|_{L^2}<\frac{\varepsilon}{10N}
\end{align}
for all $i\in\{1,\cdots,N\}$, and,  for $\psi_0:=\rho_{m_0}*u_0$ we have $supp(\psi_0)\subset B_{2k_0+\sigma}$ and
\begin{align}\label{(iii)}
    \|\psi_0-u_0\|_{L^q}<\frac{\varepsilon}{10n_0}.
\end{align}
Notice that, by the definition of $u_0$ and $T_k$, we get
\[
|\psi_0(x)|\leq\int_{\R^N}\rho_{m_0}(x-y)|u_0(y)|dy\leq k_0\int_{\R^N}\rho_{m_0}(x-y)dy=k_0.
\]
Thus, using \eqref{(iii)} we see that
\begin{align}\label{(iv)}
    \|\psi_0\chi_{B_{2k_0}}-u_0\|_{L^q_b(\{w\leq n_0^q\})}=\left(\int_{B_{2k_0}\cap\{w\leq n_0^q\}}|\psi_0-u_0|^qw(x)dx\right)^\frac{1}{q}
    \leq n_0\|\psi_0-u_0\|_{L^q(B_{2k_0})}<\frac{\varepsilon}{10}.
\end{align}
Also, using \eqref{(ii)} we have
\begin{align}\label{(v)}
    \|\psi_0-\psi_0\chi_{B_{2k_0}}\|_{L^q_b}=\left(\int_{\R^N}|\psi_0(1-\chi_{B_{2k_0}})|^qw(x)dx\right)^\frac{1}{q}
    \leq k_0\|w\|_{L^1(B_{2k_0+\sigma}\setminus B_{2k_0})}<\frac{\varepsilon}{10}.
\end{align}
So, due to \eqref{(iv)}, \eqref{(v)} and \eqref{(i)} we obtain
\begin{align}\label{(vi)}
    \|\psi_0-u_0\|_{L^q_b}&\leq \|\psi_0-\psi_0\chi_{B_{2k_0}}\|_{L^q_b}+  \|\psi_0\chi_{B_{2k_0}}-u_0\|_{L^q_b(\{w\leq n_0^q\})}+
    \|(\psi_0\chi_{B_{2k_0}}-u_0)(1-\chi_{\{w\leq n_0^q\}})\|_{L^q_b}\nonumber\\
    &< \frac{\varepsilon}{10}+\frac{\varepsilon}{10}+ 2k_0 \|(1-\chi_{\{w\leq n_0^q\}})w\|_{L^1(B_{2k_0})} <\frac{2\varepsilon}5.
\end{align}
Then, due to \eqref{(iii)2} and \eqref{(vi)} it holds
\[ 
 \|\psi_0-u_0\|_{E^q_b}\leq  \|\psi_0-u_0\|_{L^q_b}+\sum_{i=1}^N\left\|\rho_{m_0}*\frac{\partial u_0}{\partial x_i}-\frac{\partial u_0}{\partial x_i}\right\|_{L^2}< \frac{\varepsilon}{2}.
\]
Finally, using \eqref{norma geral} we obtain
\[ 
 \|\psi_0-u\|_{E^q_b}\leq  \|\psi_0-u_0\|_{E^q_b}+ \|u_0-u\|_{E^q_b}< {\varepsilon},
\]
and conclude this proof.
\end{proof}

\section*{Acknowledgements}
Elisandra Gloss and Bruno Ribeiro would like to thank Conselho Nacional de Desenvolvimento Científico e Tecnol\'ogico (CNPq) for the support; in particular, for the grants 201454/2024-6 and 443594/2023-6 (E.G.),  grants 314111/2023-9, 443594/2023-6, 201452/2024-3 (B.R.).

\section*{Conflict of interest statement}

The authors have no relevant financial or non-financial interests to disclose.
%The authors have no conflicts of interest to declare that are relevant to the content of this article. All authors certify that they have no affiliations with or involvement in any organization or entity with any financial interest or non-financial interest in the subject matter or materials discussed in this manuscript. The authors have no financial or proprietary interests in any material discussed in this article.

%\section*{Data availability statement}
%This manuscript has no associated data.

%\section*{Human and animal studies}
%The content of this manuscript did not involve studies on humans nor animals.

\end{document}